\setlist[enumerate,1]{label = (\alph*),
                      ref = (\alph*)}
\let\mathscr\relax % just so we can load this and rsfs
\newcommand*{\N}{\mathbb{N}}
\newcommand*{\Z}{\mathbb{Z}}
\newcommand*{\PP}{\mathbb{P}}
\newcommand*{\cR}{\mathcal{R}}
\newcommand*{\abs}[1]{{\lvert #1 \rvert}}
\newcommand*{\given}{\mid}
\newcommand*{\maps}{\nobreak\mskip2mu\mathpunct{}\nonscript
  \mkern-\thinmuskip{:}\mskip6muplus1mu\relax}
\newcommand*{\card}[1]{\abs{#1}}
\newcommand*{\ti}[1]{\tilde{#1}}
\newcommand*{\wti}[1]{\widetilde{#1}}
\newcommand*{\restrict}[1]{{\mid}_{#1}}
\let\@@pmod\pmod
\DeclareRobustCommand{\pmod}{\@ifstar\@pmods\@@pmod}
\def\@pmods#1{\mkern4mu({\operator@font mod}\mkern 6mu#1)}
\newcommand*{\disp}{\operatorname{disp}^{+}}
\newcommand*{\define}[1]{\textit{#1}}
\DeclareMathOperator{\Jac}{Jac}
\DeclareMathOperator{\codim}{codim}
\newcommand{\DeclareAutoPairedDelimiter}[3]{%
  \expandafter\DeclarePairedDelimiter\csname
  Auto\string#1\endcsname{#2}{#3}%
  \begingroup\edef\x{\endgroup
    \noexpand\DeclareRobustCommand{\noexpand#1}{%
      \expandafter\noexpand\csname Auto\string#1\endcsname*}}%
  \x }
\DeclareAutoPairedDelimiter{\p}{(}{)}
\DeclarePairedDelimiter{\ceil}{\lceil}{\rceil}
\theoremstyle{definition}
\newtheorem{definition}{Definition}[section]
\newtheoremstyle{problem}%
{}{}%
{}{}%
{\bfseries}{.}%
{ }%
{\thmname{#1}\thmnumber{ #2}\thmnote{ #3}}
\theoremstyle{problem}
\newtheorem{example}[definition]{Example}
\theoremstyle{plain}
\newtheorem{proposition}[definition]{Proposition}
\newtheorem{conjecture}[definition]{Conjecture}
\newtheorem{lemma}[definition]{Lemma}
\newtheorem{corollary}[definition]{Corollary}
\theoremstyle{remark}
\newtheorem{remark}[definition]{Remark}
\theoremstyle{theorem}
\newtheorem{maintheorem}{Theorem}	
\numberwithin{equation}{section}
\numberwithin{figure}{section}
\definecolor{c1}{RGB}{180,180,255}
\definecolor{c2}{RGB}{255,150,150}
\definecolor{c3}{RGB}{220,150,255}
\title[Prym--Brill--Noether loci of special curves]{Prym--Brill--Noether loci of special curves%
  \thanks{Research conducted at the Georgia Institute of Technology
    with the support of RTG grant GR10004614 and REU grant
      GR10004803} }
\author{Steven Creech}
\address{Department of Mathematics, Brown University, Providence, RI 02912, USA}
\email{\href{mailto:steven\_creech@brown.edu}{steven\_creech@brown.edu}}
\author{Yoav Len}
\address{School of Mathematics and Statistics, University of St Andrews,  St Andrews, KY16 9SS, UK}
\email{\href{mailto:yoav.len@st-andrews.ac.uk}{yoav.len@st-andrews.ac.uk}}
\author{Caelan Ritter}
\address{Department of Mathematics, University of Washington, Seattle, WA 98195, USA}
\email{\href{mailto:critter1@uw.edu}{critter1@uw.edu}}
\author{Derek Wu}
\address{School of Mathematics, Georgia Institue of
  Technology, Atlanta, GA 30332-0160, USA}
\email{\href{mailto:dwu96@gatech.edu}{dwu96@gatech.edu}}
\begin{document}

\begin{abstract}
  We use Young tableaux to compute the dimension of $V^r$, the
  Prym--Brill--Noether locus of a folded chain of loops of any
  gonality. This tropical result yields a new upper bound on the
  dimensions of algebraic Prym--Brill--Noether loci.  Moreover, we
  prove that $V^r$ is pure-dimensional and connected in codimension
  $1$ when $\dim V^r \geq 1$. We then compute the first Betti number of
  this locus for even gonality when the dimension is exactly $1$, and
  compute the cardinality when the locus is finite and the edge
  lengths are generic.
\end{abstract}
	
\maketitle
	
\setcounter{tocdepth}{1}
\tableofcontents

% --------------------------------------------------------------------
% ------------------------------- Body -------------------------------
% --------------------------------------------------------------------

\section{Introduction}
Constructing algebraic cycles in abelian varieties can in general be a
challenging problem.  The problem becomes more tractable if we
restrict our attention to varieties that show up in the context of
algebraic curves such as Jacobians or Prym varieties. In such cases,
algebraic cycles may naturally be constructed by appealing to
Brill--Noether theory, and taking advantage of the geometric
interpretation of the points of the abelian varieties.

Let $f\maps\wti{X}\to X$ be an unramified double cover of either
tropical or algebraic curves, and let $f_*$ be the induced map on
divisor classes. The corresponding \define{Prym--Brill--Noether locus}
is
% \begin{align*}
%   V^r(X,f) = \{\, [D]\in\Jac(\wti{X}) \given \; &f_*(D) = K_X,\\
%   &r(D)\geq r,\; r(D)\equiv r \pmod* 2 \,\},
% \end{align*}
\begin{equation*}
  V^r(X,f) = \set{ [D]\in\Jac(\wti{X}) \given f_*(D) = K_X,\,
    r(D)\geq r,\, r(D)\equiv r \pmod* 2},
\end{equation*}
where $K_X$ is the canonical divisor of $X$.  It is a variation of the
usual Brill--Noether locus $W_r^d(\wti{X})$ that also takes
symmetries of $\wti{X}$ into account. The Prym--Brill--Noether locus
naturally lives inside the Prym variety associated with $f$
%namely a connected component of the fiber of $K_X$ in the Jacobian of $\wti{X}$
(see \cref{sec:preliminaries} for more details).  
Moreover, since the locus may be described as intersections of translates of the theta class, it is, in particular, tautological \cite[Theorem 1.2]{Arap_Prym}. 
This paper is concerned with the dimension and additional topological properties of $V^r(X,f)$. 

Properties of the usual Brill--Noether
loci  have been studied extensively for curves that are general in moduli  in 
classical algebraic geometry  \cite{GH, Gieseker_Petri, Fulton_Lazarsfeld_degeneracy} and more recently in
tropical geometry \cite{CDPR, JP14, Len1}.  When a curve is \emph{not} general in moduli, its Brill--Noether locus is no longer expected to be irreducible or pure-dimensional. 
Nevertheless, the dimensions of  irreducible components of these loci have recently been computed for general $k$-gonal curves, namely general among curves that admit a $k$-fold cover of $\PP^1$
\cite{Cook_Jensen_BN_Components, JR, Larson}. 

% Original paragraph
%The dimension and topological properties of the usual Brill--Noether
%locus have been studied extensively in classical algebraic geometry
%\cite{GH, Gieseker_Petri, Fulton_Lazarsfeld_degeneracy} and in
%tropical geometry \cite{CDPR, JR, Len1}. More recently, dimensions of
%non-maximal components of Brill--Noether loci were computed using both
%tropical and non-tropical techniques \cite{Cook_Jensen_BN_Components,
%  Larson}.

In contrast, much less is known for Prym varieties. Bertram and
Welters computed the dimension of the Prym--Brill--Noether locus for
curves that are general in  moduli \cite{Bertram_Prym,
  Welters_Prym}, and Welters has also shown that the locus is
generically smooth.  The tropical study of Prym varieties was
initially introduced in joint work of the second author with Jensen
\cite{JL}, and further studied in joint work with Ulirsch
\cite{len2019skeletons}.
%(see \cite{LUZ_Abelian_covers} for higher degree covers).  
As they show, tropical Pryms are abelian of the
expected dimension and behave well with respect to tropicalization,
leading to a new bound on the dimension of 
Prym--Brill--Noether loci of general even-gonal algebraic curves.

%In the case of Prym varieties, we gain significant mileage from the geometric interpretation of their points.   More mileage may be gained when restricting to classes with a natural algebraic interpretation, namely  tautological classes. 

Our first result is an extension of these techniques to curves of
\emph{any} gonality.

\begin{restatable}{maintheorem}{tropicalPBN}
  \label{thm:tropicalPBN}%
  Let $\varphi\maps\wti\Gamma\to\Gamma$ be a $k$-gonal uniform folded
  chain of loops and denote by $l$ the quantity $\ceil{\frac{k}{2}}$.
  Then the codimension of $V^r(\Gamma,\varphi)$ relative to the Prym
  variety is given by
  \begin{equation}\label{eq:4}
    n(r,k) =
    \begin{cases}
      \binom{l+1}{2}+l(r-l) & \text{if $l \leq r-1$}\\
      \binom{r+1}{2} & \text{if $l>r-1$}
    \end{cases}.
  \end{equation}
\end{restatable}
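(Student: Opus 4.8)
My plan is to translate the problem into a count of Young tableaux via the theory of reduced divisors, following the degeneration method of Cools--Draisma--Payne--Robeva adapted to the folded double cover. First I would fix a base point on $\wti\Gamma$ and record each class in $\Jac(\wti\Gamma)$ by its unique reduced representative, producing explicit torus coordinates. Imposing $\varphi_*(D) = K_\Gamma$ identifies (a translate of) the Prym variety with a subtorus, which I would describe in these coordinates while carefully tracking how $\varphi$ glues the two sheets over each folded loop; the antisymmetry induced by the covering involution is exactly what will later force the parity condition $r(D) \equiv r \pmod{2}$.

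Next I would compute the Baker--Norine rank of a reduced representative using Dhar's burning algorithm. Because $\wti\Gamma$ is a cover of a chain of loops, the burning process localizes one loop at a time, so the condition $r(D) \geq r$ unwinds into a family of local inequalities, one per loop. I expect these inequalities to be encoded precisely by a standard Young tableau: the loops on which a chip is absorbed as the fire propagates correspond to the boxes of a tableau, and the folded loops, governed by $l = \ceil{\frac{k}{2}}$, truncate the underlying staircase at height $l$. Consequently $V^r(\Gamma,\varphi)$ decomposes as a union of polytopes (subtori), each indexed by an admissible tableau, and its dimension is the maximum, over all such tableaux, of the dimension of the associated polytope.

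The dimension of each polytope equals the dimension of the Prym minus the number of independent constraints its tableau imposes, so the codimension of $V^r$ is the minimum of these constraint counts over all admissible tableaux. I would identify the minimizing configuration with the truncated staircase $(r, r-1, \dots, r-l+1)$: when $l > r-1$ the staircase is not truncated and contributes all $\binom{r+1}{2}$ boxes, recovering the codimension of a general Prym--Brill--Noether locus, while when $l \leq r-1$ the height-$l$ truncation leaves a staircase of $\binom{l+1}{2}$ boxes together with an $l \times (r-l)$ rectangle, giving $\binom{l+1}{2} + l(r-l)$ boxes. Matching the two truncation regimes to the two branches of \eqref{eq:4} yields the stated formula; note that the two expressions agree at $l = r$, consistent with the continuity of the optimization across the case boundary.

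The crux of the argument --- and the step I expect to be hardest --- is establishing that the rank inequalities produced by Dhar's algorithm on the \emph{folded} chain are genuinely governed by the standard-Young-tableau condition, and that this combinatorics is automatically compatible with the parity constraint. To pin down the dimension exactly I would need both directions: a lower bound, by explicitly building a divisor in $V^r$ realizing the optimal truncated staircase, and a matching upper bound, by showing every class of $V^r$ lies in the union of the tableau-indexed polytopes. The boundary case $l = r$, where the two formulas coincide, will require separate care to ensure that no component of larger dimension has been overlooked.
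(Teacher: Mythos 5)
Your setup is sound and matches the paper's framework in spirit: $V^r(\Gamma,\varphi)$ decomposes into cells indexed by displacement tableaux (the paper cites this correspondence from \cite{pflueger2017special, len2019skeletons} rather than re-deriving it via Dhar's algorithm as you propose, but that is a matter of length, not correctness), and the codimension of each cell is the number of distinct symbols its tableau uses. The gap is in the step you describe as ``I would identify the minimizing configuration with the truncated staircase.'' That identification \emph{is} the theorem, and your proposal contains no mechanism for proving it. The subtle point is that the domain of every relevant tableau is the full square $[r+1]\times[r+1]$ (or, after symmetrization, the staircase $T_r$); what varies is how symbols may \emph{repeat}, since the displacement condition allows a symbol to recur in boxes whose lattice distance is a multiple of $k$, and the Prym condition couples a symbol $a$ with its dual $2g-a$. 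So the codimension is not the box count of any truncated shape; it is the minimum, over all such repetition patterns, of the number of distinct symbols, and one must rule out cleverer repetition schemes that beat $\binom{l+1}{2}+l(r-l)$. Your ``matching upper bound'' direction (showing every class lies in the union of tableau-indexed cells) only establishes the decomposition; even granting it, the dimension bound still requires this minimization, which is exactly where the work lies.

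The paper closes this gap with two ingredients absent from your proposal. First, a domination/reflection algorithm (its Proposition on reflective tableaux) showing that any Prym tableau with the correct parity is dominated by one that is symmetric under reflection across the main anti-diagonal, so it suffices to minimize over tableaux on $T_r$; this step is needed to exploit the Prym symmetry at all. Second, and this is the combinatorial core, an anti-diagonal counting argument: for every $n \geq l+1$, each anti-diagonal $A_n$ must contain at least $l$ symbols not appearing in $T_{n-1}$. The proof shows that for each adjacent pair of diagonal classes $\{i,i+1\} \subset \Z/k\Z$ a new symbol must appear in $A_n \cap (D_{i,k} \cup D_{i+1,k})$, and then a pigeonhole argument shows that covering all $k$ such pairs forces at least $\lceil k/2 \rceil = l$ new symbols --- this is precisely what makes the odd-gonality case (the genuinely new case; even and generic gonality were already known from \cite{len2019skeletons}) go through, and your proposal never engages with the even/odd distinction. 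Without an argument of this type, your lower bound on the codimension is an assertion, not a proof; the construction of a single divisor realizing the truncated staircase only gives the upper bound on codimension, which was already known.
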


\noindent By \emph{uniform} $k$-gonal we mean that the ratio of the
lengths of the upper and lower arcs of each loop is exactly $k$; see
\cref{sec:preliminaries} for more details.  We adopt the convention
that a set whose dimension is negative is empty, so
$V^r(\Gamma,\varphi)$ is empty if $n(r,k) > g - 1$.  As it turns out,
the odd gonality case is far trickier than the even, necessitating the
development of several new combinatorial tools.

As a consequence of the theorem, we obtain an
upper bound on the dimensions of Prym--Brill--Noether loci for
algebraic curves that are general in the $k$-gonal locus. In what
follows, we work over a non-Archimedean field $K$ with residue field
$\kappa$ whose characteristic is prime to both $2$ and
$k$. 

\begin{restatable}{maincorollary}{algebraicPBN}
  \label{cor:algebraicPBN}%
  Let $r\geq -1$ and $k\geq 2$.    Then there is a nonempty open subset
  of the $k$-gonal locus of $\cR_g$ such that for every unramified
  double cover $f\maps\wti{C}\to C$ in this open subset we have
  \begin{equation}\label{eq:1}
    \dim V^r(C,f) \leq g-1-n(r,k).
  \end{equation}
\end{restatable}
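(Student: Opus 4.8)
The plan is to deduce the algebraic inequality from the tropical computation in \cref{thm:tropicalPBN} by specialization, followed by an upper-semicontinuity argument over the $k$-gonal locus. An unramified double cover of a genus-$g$ curve has a Prym variety of dimension $g-1$, so \cref{thm:tropicalPBN} says precisely that $\dim V^r(\Gamma,\varphi)=g-1-n(r,k)$ when this number is nonnegative, and that $V^r(\Gamma,\varphi)=\emptyset$ otherwise. It therefore suffices to exhibit a single algebraic $k$-gonal double cover realizing the bound \eqref{eq:1}, and then to propagate it to a nonempty open set.

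First I would fix, over the given non-Archimedean field $K$ whose residue field $\kappa$ has characteristic prime to $2$ and $k$, an unramified double cover $f\maps\wti C\to C$ of smooth projective curves with $C$ lying in the $k$-gonal locus of $\cR_g$ and with Berkovich skeleton equal to the uniform $k$-gonal folded chain of loops $\varphi\maps\wti\Gamma\to\Gamma$. The existence of such a lift — an algebraic $k$-gonal curve specializing to the prescribed chain and carrying a compatible free involution — rests on the tropicalization and lifting theory for Pryms developed in \cite{JL, len2019skeletons}; the hypotheses on the residue characteristic are exactly what guarantee that the \'etale double cover and the $g^1_k$ lift from $\Gamma$ to $C$.

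Next I would apply the specialization lemma for tropical Pryms. Tropicalization is the retraction of $\Jac(\wti C)^{\mathrm{an}}$ onto the tropical Jacobian of $\wti\Gamma$; since $\wti\Gamma$ is maximally degenerate, $\Jac(\wti C)$ has totally degenerate reduction and this retraction carries the $(g-1)$-dimensional Prym variety of $f$ onto the tropical Prym of $\varphi$. Compatibility of pushforward with tropicalization sends the defining condition $f_*D=K_C$ to $\varphi_*(\trop D)=K_\Gamma$; Baker's rank-semicontinuity gives $r(\trop D)\ge r(D)\ge r$; and invariance of the parity of the rank of a Prym divisor gives $r(\trop D)\equiv r(D)\equiv r\pmod* 2$. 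Hence $\trop\bigl(V^r(C,f)\bigr)\subseteq V^r(\Gamma,\varphi)$. Because the ambient abelian variety is totally degenerate, tropicalization preserves dimension (the Bieri--Groves theorem in this setting), so $\dim V^r(C,f)=\dim\trop\bigl(V^r(C,f)\bigr)\le\dim V^r(\Gamma,\varphi)=g-1-n(r,k)$.

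Finally I would globalize: the fiber dimension of the relative Prym--Brill--Noether locus over the $k$-gonal locus of $\cR_g$ is upper semicontinuous, so the set of covers satisfying \eqref{eq:1} is open, and it is nonempty because it contains the cover constructed above. The main obstacle is the specialization step. Beyond the set-theoretic inclusion one must control the parity condition $r(D)\equiv r\pmod* 2$, ensuring that the (possibly larger) rank of the tropicalization retains the correct parity, and one must know that tropicalization does not collapse the dimension of $V^r(C,f)$ — which is precisely where the total degeneracy of $\Jac(\wti C)$ is essential. Producing a lift that is simultaneously $k$-gonal and equipped with the prescribed unramified double cover is the remaining delicate point; fortunately both this and the specialization machinery are already available from \cite{JL, len2019skeletons}, so that the genuinely new input is the dimension count of \cref{thm:tropicalPBN}.
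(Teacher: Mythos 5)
Your proposal is correct and follows essentially the same route as the paper: lift the folded chain of loops to a $k$-gonal unramified double cover (using the residue-characteristic hypothesis), apply Baker-type specialization to get $\trop\bigl(V^r(C,f)\bigr)\subseteq V^r(\Gamma,\varphi)$, invoke Gubler's Bieri--Groves theorem to preserve dimension, and finish with upper semicontinuity over the $k$-gonal locus of $\cR_g$. The only difference is that you spell out details (parity preservation, total degeneracy of $\Jac(\wti C)$) that the paper delegates to the nearly identical proof of \cite[Theorem~B]{len2019skeletons}.
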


% We then proceed to exhibit additional tropological properties (i.e.,
% topological properties of tropical varieties) of the tropical
% Brill--Noether loci, such as cardinality, connectivity, and pure
% dimensionality.

% In this paper, we study the dimensions of these loci for curves that
% boundary strata of the moduli space of curves. More precisely, we
% focus on curves of gonality $k$ for positive integers $k$. In
% addition, we study more subtle invariants for curves that are
% general in moduli.

% On the tropical side, we focus on the tropical curve known as the
% \emph{chain of loops}.  Our main result is a determination of the
% dimension of its Prym--Brill--Noether locus for \emph{every}
% gonality $k$.

We then turn our attention to more subtle \emph{tropological}
properties of Prym--Brill--Noether loci of folded chains of
loops.\footnote{We introduce the descriptor ``tropological'' to mean
  ``topological'' in the context of tropical varieties.} %\caelan{I
  %tried to rephrase the explanation of tropological as a footnote
  %rather than a parenthetical remark; I think it breaks up the text
  %less.}
  % @Caelan, I agree, it is fine; thus, I removed your comment

\begin{maintheorem}\label{thm:pure-dim}
  $V^r(\Gamma,\varphi)$ is pure-dimensional for any gonality $k$. If
  $\dim V^r(\Gamma,\varphi)\geq 1$ then it is also connected in
  codimension $1$.
\end{maintheorem}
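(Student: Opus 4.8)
The plan is to work entirely with the polyhedral structure of $V^r(\Gamma,\varphi)$ that already underlies the dimension count of \cref{thm:tropicalPBN}. In that computation $V^r$ is exhibited as a polyhedral complex whose cells $C_T$ are indexed by the Young tableaux $T$ appearing in the classification; the continuous coordinates of $C_T$ record the positions of the chips on the loops that are \emph{free} to move, and the codimension $n(r,k)$ is attained precisely by the tableaux for which this number of free coordinates is maximal. Thus both assertions become statements about this indexing set: (i) every cell is a face of a cell of dimension $g-1-n(r,k)$, and (ii) the maximal cells, linked by codimension-$1$ adjacency, form a connected graph.

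For pure-dimensionality I would prove a \emph{completion lemma}: given any tableau $T$ indexing a cell $C_T$, one can separate its coincident or boundary-pinned entries one at a time, each separation producing a tableau $T'$ with one more free coordinate and with $C_T$ a facet of $C_{T'}$. Iterating exhibits $C_T$ as a face of a maximal cell, so no cell of dimension $<g-1-n(r,k)$ is maximal. Combined with the fact, implicit in \cref{thm:tropicalPBN}, that the tableaux realizing the maximal dimension all have exactly $g-1-n(r,k)$ free coordinates, this gives purity. The bookkeeping here is to check that each separation step genuinely preserves membership in $V^r$, i.e.\ that the constraints $f_*(D)=K_X$, $r(D)\ge r$, and $r(D)\equiv r\pmod*2$ are maintained; this is where the explicit $\disp$/burning description of rank from the proof of \cref{thm:tropicalPBN} is used.

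For connectedness in codimension $1$, recall that a pure $d$-dimensional polyhedral complex with $d\ge 1$ is connected in codimension $1$ exactly when the dual graph $\mathcal{G}$---with vertices the maximal cells and edges the shared codimension-$1$ faces---is connected. I would define a set of elementary moves on maximal tableaux, each a local swap of two adjacent entries, and show that two maximal cells $C_T$ and $C_{T'}$ are joined by an edge of $\mathcal{G}$ precisely when $T$ and $T'$ differ by one such move: the shared facet is the codimension-$1$ cell obtained by forcing the two swapped chips to coincide. It then suffices to show the move graph on maximal tableaux is connected, which I would do by fixing a distinguished ``canonical'' maximal tableau and showing every maximal tableau can be transported to it by a sequence of moves, inducting on a suitable statistic such as the number of inversions relative to the canonical tableau.

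The main obstacle, as flagged in the discussion after \cref{thm:tropicalPBN}, is the odd-gonality case. When $k$ is odd the folding of $\wti\Gamma$ over $\Gamma$ breaks the symmetry that makes the even case a close analogue of the classical chain-of-loops picture, so the admissible tableaux and the catalogue of moves are more delicate; one must verify both that the completion steps stay inside $V^r$ near the boundary tableaux and that enough moves survive to keep $\mathcal{G}$ connected there. Handling these edge configurations is exactly what requires the new combinatorial tools developed for \cref{thm:tropicalPBN}, and I expect the bulk of the work---and the only place genuinely harder than the classical Brill--Noether analogue---to lie in the odd case.
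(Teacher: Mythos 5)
Your high-level architecture matches the paper's---purity by showing every cell sits inside a cell of top dimension, connectedness by moves on maximal tableaux with an induction toward a canonical one---but both halves of your plan are missing the ingredient that makes them work, and in one place the plan is actually wrong. For purity, ``separating coincident or boundary-pinned entries one at a time'' has no mechanism behind it: in the tableau indexing, enlarging a cell means \emph{deleting a symbol} from the tableau and refilling its boxes with repetitions of other symbols, consistently with the tableau, displacement, and Prym conditions. The paper does this not incrementally but in one sweep, via two nontrivial constructions: the reflection algorithm (\cref{prop:reflective}), which replaces an arbitrary Prym tableau by a dominating \emph{reflective} one, and the theory of strips and non-repeating tableaux (\cref{prop:14,prop:18}), which produces a dominating tableau of codimension exactly $n(r,k)$. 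Your one-coordinate-at-a-time completion is a stronger claim than is needed, and nothing in the sketch says how to free a single chip while preserving the displacement condition; for $k\le 2r$ this is exactly where the strip structure is forced, so invoking the ``$\disp$/burning description of rank'' does not substitute for it.

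For connectedness, your identification of dual-graph edges with ``local swaps of two adjacent entries'' fails for odd $k$. By the correspondence in \cref{eq:8}, maximal cells for odd gonality are indexed by strip tableaux on any of $2^{r-l}$ distinct strips, and two maximal cells lying on different strips can share a codimension-$1$ face even though the tableaux differ in many boxes (the whole repeated region changes when the strip changes); adjacency is a condition on the symbol content of each diagonal (\cref{def:3}), not a local swap. Consequently an induction on inversions relative to a single canonical tableau cannot move between strips, which is precisely the hard part: the paper's odd-case proof of \cref{thm:path} introduces a height function on strips and, to lower it, passes through an intermediate tableau $u$ that is non-repeating in \emph{no} strip---its cell has codimension $1$ in the locus and is a common face of maximal cells from two different strips---after first cycling the two largest symbols $g-2$ and $g-1$ into protective positions so the tableau condition survives. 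In the generic and even cases your plan essentially reproduces the paper's Lemmas \ref{lem:18} and \ref{lem:17}, but you flag the odd case as the main obstacle without offering any mechanism for it; since that is where the bulk of the proof lies, the proposal as it stands has a genuine gap.
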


\noindent By ``connected in codimension $1$,'' we mean that any two maximal cells are
connected by a sequence of cells whose codimension relative to the locus is at most
$1$.  The different properties mentioned in the theorem are proved in Propositions
\ref{thm:pure-dim} and \ref{thm:path}.  The pure-dimensionality of the locus is quite
surprising since Brill--Noether loci of general $k$-gonal curves may very well have
maximal components of different dimension (see for instance \cite[Section 1]{JR}).
%\cite[Example 2.4]{Cook_Jensen_BN_Components}).
We do not know at this point whether this phenomenon is special to
tropical Prym curves or carries on to algebraic ones as well.

If we choose $r$ and $k$ so that $n(r,k) = g - 1$, the
Prym--Brill--Noether locus is a finite collection of points.  If $k$
is also assumed to be even, we may compute the cardinality by
constructing a bijection between its points and certain lattice paths
(\cref{prop:card}).
%  When its dimension is positive, we show in
%Theorem \ref{thm:path} that the locus becomes path connected (in fact, connected in codimension $1$). 
If the dimension is $1$, the tropical Prym--Brill--Noether locus is a
graph; we compute its first Betti number in the case of generic edge
lengths. %\steven{Should we change first homology to genus?}  \caelan{I
  %don't think so; see my first note in section 5.2.}

\begin{restatable}{maintheorem}{genericdimone}
  \label{thm:generic-dim-1}
  Let $\varphi\maps\wti\Gamma\to\Gamma$ be a folded chain of loops with generic edge length
  such that $\dim V^r(\Gamma,\varphi)=1$. Then the first Betti number of
  $V^r(\Gamma,\varphi)$ is given by
  \begin{equation}\label{eq:9}
    \frac{r \cdot C(r,0) \cdot \left(\binom{r+1}{2}+1\right)}{2} + 1.
  \end{equation}
\end{restatable}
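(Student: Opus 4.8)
The plan is to compute $b_1$ directly from the cell structure of the locus, viewed as a graph. When $\dim V^r(\Gamma,\varphi)=1$, \cref{thm:pure-dim} guarantees that $V^r(\Gamma,\varphi)$ is pure-dimensional and connected in codimension $1$; for a one-dimensional polyhedral complex this says exactly that it is a connected graph whose maximal cells are its edges, and since there are only finitely many combinatorial types it is a finite connected (multi)graph. For any such graph the first Betti number is $b_1 = E - V + 1$, where $V$ and $E$ are the numbers of $0$-cells (vertices) and $1$-cells (edges). So the whole problem reduces to enumerating the cells and checking that $E - V = \frac{r\cdot C(r,0)\cdot\left(\binom{r+1}{2}+1\right)}{2}$; equivalently, via the handshake identity $2E=\sum_v \deg v$, one may compute $b_1 = \frac12\sum_v(\deg v - 2)+1$, which already exhibits the $+1$ and the overall factor of $\frac12$ appearing in the target formula.

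First I would import the combinatorial classification of cells developed for \cref{thm:tropicalPBN}: each cell of $V^r(\Gamma,\varphi)$ is indexed by a tableau, and the dimension of a cell equals the number of loops on which the corresponding divisor class is free to move. Under this dictionary the vertices are precisely the rigid tableaux (no free loop) and the edges are the tableaux with a single free loop. The hypothesis of generic edge lengths is what makes this correspondence an honest bijection onto the cells: it forbids accidental coincidences that would merge cells or raise the dimension, forces the locus to be genuinely one-dimensional, and ensures that the two resolutions of the unique free loop of an edge yield its (generically distinct) boundary vertices, thereby pinning down the incidence structure.

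Next I would carry out the enumeration. The vertex count $V$ is a single count of rigid tableaux. For the edge count $E$ I would stratify by the position of the free loop and the local shape of the tableau there, multiplying in each case the number of rigid completions admissible on either side. I expect the block that is forced to remain rigid to contribute the factor $C(r,0)$, while the local data at the free loop contribute the factors $r$ and $\binom{r+1}{2}+1$; assembling $E-V$ and simplifying should then collapse to the stated value. The main obstacle will be this edge enumeration rather than the reduction: unlike counting rigid tableaux, it requires controlling, loop by loop, exactly which rigid configurations on the two sides glue to a valid one-parameter family while genericity rules out any further freedom, and it is precisely this bookkeeping that must be organized so as to factor cleanly as $\frac{r\cdot C(r,0)\cdot\left(\binom{r+1}{2}+1\right)}{2}$.
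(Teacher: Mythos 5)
Your reduction misidentifies the cell structure of $V^r(\Gamma,\varphi)$, and this breaks the Euler-characteristic bookkeeping. When $\dim V^r(\Gamma,\varphi)=1$, a maximal cell corresponds to an injective tableau on $T_r$ that omits exactly one symbol $a$, and the omitted chip travels around the \emph{entire} loop $\ti\gamma_a$: the cell is a circle, not a segment. So ``edges $=$ tableaux with a single free loop'' is false --- those tableaux count the circles, of which there are $C(r,0)\cdot\left(\binom{r+1}{2}+1\right)$ by \cref{prop:numcomp}. Likewise ``vertices $=$ rigid tableaux'' is vacuous in your dictionary: here $g-1=\binom{r+1}{2}+1$ while $T_r$ has only $\binom{r+1}{2}$ boxes, so no tableau on $T_r$ can use every symbol; the vertices are instead the pairwise intersection points of circles, i.e., they correspond to unordered pairs of strip tableaux related by swapping the free symbol into the tableau in place of another (and any two circles meet in at most one point). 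With your identifications, $E-V+1$ does not compute $b_1$: a lone circle would give $1-0+1=2$ instead of $1$. The paper's route is to note that every vertex is a transverse crossing of exactly two circles, so the locus has a $4$-regular model; then $e=2v$, hence $b_1=e-v+1=v+1$, and the whole problem becomes counting \emph{vertices}, not edges.

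That count is the real content, and your sketch leaves it at ``I expect \dots should collapse.'' The paper computes $v=\tfrac12\,E^{T_r}\cdot C(r,0)\cdot\left(\binom{r+1}{2}+1\right)$, where $E^{T_r}$ is the average number of crossings per circle, and proves $E^{T_r}=r$. This is not a routine stratification by the position of the free loop: it invokes the average-intersection formula of \cite[Theorem~2.9]{chan2018genera} (\cref{eq:11}), observes that one of the two sums there vanishes because adding a box to the left of a row of $T_r$ never produces a skew shape, and evaluates the other sum via the hook-length formula, double-factorial manipulations, and the binomial identity $\sum_{j=0}^{r}\binom{2j}{j}\binom{2(r-j)}{r-j}=4^{r}$. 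Without this step (or some equivalent argument that the average number of crossings per circle is exactly $r$), the factor $r$ in \cref{eq:9} is unaccounted for, so the proposal as it stands has a genuine gap in both the reduction and the enumeration.
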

\noindent Here, $C(r,0)$ is the number of distinct ways to fill a
staircase tableau of size $r$ such that each symbol in the set
$[\binom{r+1}{2}]$ is used exactly once.\footnote{We define $C(r,k)$
  more generally in \cref{sec:finite} once we have more tools at our
  disposal.} Moreover, we calculate the first Betti number in the cases
where $k$ is 2 or 4 (Propositions \ref{prop:k2dim1} and
\ref{prop:k4dim1}).

Many of our results build on the correspondence between certain Young
tableaux and divisors on tropical curves (cf. \cite{CDPR,
  pflueger2017special}). The key tool that we develop to enumerate
such tableaux is the notion of a \emph{non-repeating strip}, a special
subset that determines the rest of the tableau (see
\cref{sec:an-interl}).  We hope that this and other techniques
presented in our paper will lead to additional results concerning
dimensions and Euler characteristics of tropical and algebraic
Brill--Noether loci.

There are numerous interesting avenues for investigation moving
forward. The techniques developed in \cite{JR} for lifting special
divisors should be adapted to the current situation to determine the
precise dimension of algebraic Prym--Brill--Noether loci (see
\cref{conj:dimension} for more details). If every maximal cell of
$V^r(\Gamma,\varphi)$ can be lifted, \cref{thm:pure-dim} would
moreover imply that algebraic Prym--Brill--Noether loci are
pure-dimensional.  It would be intriguing to extend the enumerative
results of \cref{sec:counting} to any gonality, and discover whether
an algebraic version holds as well.  Note, however, that current
degeneration techniques do not immediately imply either an upper or a
lower bound on the Betti numbers of algebraic Prym--Brill--Noether
curves. Finally, it would be exciting to extend our techniques to
ramified double covers and general Galois covers. The latter would be
especially challenging since the components of the kernel of such
covers do not naturally admit a principal polarization.

\subsection*{Acknowledgements} 
We thank Dave Jensen for helpful remarks on a previous version of
this manuscript.  We also thank the nameless referees for their insightful comments and suggestions. 
This research was conducted at the Georgia Institute
of Technology with the support of RTG grant GR10004614 and REU grant
GR10004803. 
    
\section{Preliminaries}\label{sec:preliminaries}
Throughout this paper, we use the terms metric graph and tropical
curve interchangeably.  We assume that the reader is familiar with the
theory of divisors on tropical curves; a beautiful introduction to
this topic may be found in \cite[Section 2]{HMY}.  Throughout, the
\define{genus} of a graph refers to its first Betti number, which also
equals one more than the number of edges minus the number of vertices.

The result of tropicalizing a covering map of algebraic curves is a \define{harmonic} morphism of metric graphs. Such morphisms  induce natural pushforward and pullback maps between divisors that respect the equivalence relation given by chip-firing.  A map of graphs  is called a \define{double cover} when it is harmonic of degree $2$, and \define{unramified} if, in
addition, it pulls back the canonical divisor of $\Gamma$ to the
canonical divisor of $\widetilde\Gamma$. See  \cite[Definition 2.7]{LUZ_Abelian_covers} for precise definitions of harmonic morphisms and their degree.

%Given a double cover as above and a divisor class $[D]\in \Jac(X)$,
%every connected component of the fiber of $[D]$ inside $\Jac(\wti{X})$
%is referred to as a \emph{Prym variety}. These varieties are important
%invariants of curves, and form a bridge between their moduli and
%Abelian varieties.  The Prym--Brill--Noether locus naturally lives
%inside a Prym variety associated with the canonical divisor. The study
%of tropical Prym varieties began in \cite{JL}, and was carried on in
%\cite{len2019skeletons}.

Fix a divisor class $[D]$ on $\Gamma$. The fiber
$\varphi^{-1}_{*}([D])$ consists of either one or two connected
components in the Picard group of $\widetilde\Gamma$ \cite[Proposition
6.1]{JL}. Each of them is referred to as a \define{Prym variety}, and
their elements are called \define{Prym divisor classes}.  Prym
varieties are principally polarized tropical abelian varieties
\cite[Theorem 2.3.7]{len2019skeletons}.  We take the divisor $D$
above to be the canonical divisor $K_\Gamma$.  
Fixing an integer $r$,
the \define{Prym--Brill--Noether locus} $V^r(\Gamma,\varphi)$ consists
of the Prym divisors whose rank is at least $r$ and has the same
parity as $r$.
% Note that in the special case of folded chains of loops, the
% connected components of $\varphi^{-1}_{*}(K_\Gamma)$ consist of
% divisors of even and odd rank respectively \cite[Theorem
% 5.3.8]{len2019skeletons}.

Here we are interested in a particular double cover known as the
\define{folded chain of loops}. In this case, the target $\Gamma$ of the
map $\varphi$ is the \define{chain of loops} that recently appeared in
various celebrated papers (e.g. \cite{MRC, Pflueger, JR}). It consists
of $g$ loops, denoted by $\gamma_1,\ldots,\gamma_g$ and connected by
bridges. The source graph $\wti\Gamma$ is a chain of $2g-1$ loops, as
exemplified in \cref{fig:2}.  Each pair of loops $\ti\gamma_a$
and $\ti\gamma_{2g-a}$ (for $a<g$) maps down to $\gamma_a$, while each
edge of $\ti\gamma_g$ maps isometrically onto the loop $\gamma_g$.
See \cite[Section 5.2]{len2019skeletons} for a more detailed
explanation.

\begin{figure}[htb]
  \centering
  \begin{tikzpicture}[scale=2,every node/.style={scale=.5}]
  % THESE ARE THE LOOPS DOWN STAIRS
  % loop 1
  \draw (0,0) circle (.25cm)node {$\gamma_1$}; \draw
  (0.25*cos{315},0.25*sin{315})--(.75+.25*cos{225},.25*sin{225});
  % projection of chips
  \filldraw(0+.25*cos{315},.25*sin{315}) circle (0.02) node [below]{1};    
  % loop2
  \draw (.75,0) circle (.25)node {$\gamma_2$}; \draw
  (.75+.25*cos{315},0.25*sin{315})--(1.5+.25*cos{225},.25*sin{225});
  % projection of chips
  \filldraw (.75+.25*cos{225},.25*sin{225}) circle (.02) node [below]{1};
  \filldraw (.75+.25*cos{315},.25*sin{315}) circle (.02) node [below]{1};
  % loop 3 
  \draw (1.5,0) circle (.25)node {$\gamma_3$}; 
  \draw (1.5+0.25*cos{315},0.25*sin{315})--(2.25+.25*cos{225},.25*sin{225});
  % projection of chips 
  \filldraw (1.5+.25*cos{135},.25*sin{135})circle (.02) node[left]{1}; 
  \filldraw (1.5+.25*cos{45},.25*sin{45}) circle (.02)node[right]{1}; 
  %Chip firing nodes 
  %\filldraw [c1](1.5+.25*cos{225},.25*sin{225}) circle (.02) node [above right]{1}; 
  %\filldraw [c1] (1.5+.25*cos{315},.25*sin{315}) circle (.02) node [above left]{1};
  % Chip firing arrows
  %\draw [c1,ultra thick,domain=135:225, ->] plot  ({1.5+.2*cos(\x)}, {.2*sin(\x)});
  %\draw [c1,ultra thick,domain=45:-45, ->] plot ({1.5+.2*cos(\x)}, {.2*sin(\x)});
  % loop 4
  \draw (2.25,0) circle (.25)node {$\gamma_4$}; 
  \draw (2.25+0.25*cos{315},0.25*sin{315})--(3+.25*cos{225},.25*sin{225});
  % projection of chips
  \filldraw (2.25+.25*cos{135},.25*sin{135}) circle (.02) node [left]{1};
  \filldraw (2.25+.25*cos{45},.25*sin{45}) circle (.02) node [right]{1};
  % Chip firing nodes
  %\filldraw [c1] (2.25+.25*cos{225},.25*sin{225}) circle (.02) node [above right]{1};
  %\filldraw [c1] (2.25+.25*cos{315},.25*sin{315}) circle (.02) node [above left]{1};
  % Chip firing arrows
  %\draw [c1,ultra thick,domain=135:225, ->] plot ({2.25+.2*cos(\x)}, {.2*sin(\x)}); 
  %\draw [c1,ultra  thick,domain=45:-45, ->] plot ({2.25+.2*cos(\x)}, {.2*sin(\x)});
  % loop 5
  \draw (3,0) circle (.25)node {$\gamma_5$}; 
  \draw (3+0.25*cos{315},0.25*sin{315})--(3.75+.25*cos{225},.25*sin{225});
  % projection of chips
  \filldraw (3+.25*cos{135},.25*sin{135}) circle (.02) node[left]{1};
  \filldraw (3+.25*cos{45},.25*sin{45})circle (.02) node[right]{1};
  % Chip firing nodes
  %\filldraw [c1] (3+.25*cos{225},.25*sin{225}) circle (.02) node [above right]{1};
  %\filldraw [c1]  (3+.25*cos{315},.25*sin{315}) circle (.02) node [above left]{1};
  % Chip firing arrows
  %\draw [c1,ultra thick,domain=135:225, ->] plot  ({3+.2*cos(\x)}, {.2*sin(\x)}); 
  %\draw [c1,ultra  thick,domain=45:-45, ->] plot ({3+.2*cos(\x)}, {.2*sin(\x)});
  % loop 6
  \draw (3.75,0) circle (.25)node {$\gamma_6$}; 
  \draw (3.75+0.25*cos{315},0.25*sin{315})--(4.5+.25*cos{225},.25*sin{225});
  % projection of chips
  \filldraw (3.75+.25*cos{225},.25*sin{225}) circle (.02) node  [below]{1};
  \filldraw (3.75+.25*cos{315},.25*sin{315})  circle (.02) node [below]{1};
  % loop 7
  \draw (4.5,0) circle (.25) node {$\gamma_7$};
  % projection of chips
  \filldraw (4.5+.25*cos{225},.25*sin{225}) circle (.02) node [below]{1};
  %Gamma Label
  \draw  (4.9,0) node {$\Gamma$};
  % THESE ARE THE LOOPS UPSTAIRS
  % Bottom row
  % loop 13
  \draw (0,1) circle (.25cm)node {$\tilde{\gamma_{13}}$}; 
  \draw (0.25*cos{315},1+0.25*sin{315})--(.75+.25*cos{225},1+.25*sin{225});
   % chip on loop 13
  \filldraw (0.25*cos{315},1+0.25*sin{315}) circle (.02cm) node  [below]{1};
  % loop 12
  \draw (.75,1) circle (.25)node {$\tilde{\gamma_{12}}$}; 
  \draw (.75+.25*cos{315},1+0.25*sin{315})--(1.5+.25*cos{225},1+.25*sin{225});
  % chip on loop 12
  \filldraw (.75+.25*cos{225},1+.25*sin{225}) circle (.02) node  [below]{1};
  % loop 11
  \draw (1.5,1) circle (.25)node {$\tilde{\gamma_{11}}$}; 
  \draw (1.5+0.25*cos{315},1+0.25*sin{315})--(2.25+.25*cos{225},1+.25*sin{225});
  % chip on loop 11 FREE LOOP
  \filldraw (1.5+.25*cos{45},1+.25*sin{45}) circle (.02) node  [right]{1}; 
  %\draw [ultra thick,domain=0:90, <->] plot  ({1.5+.2*cos(\x)}, {1+.2*sin(\x)});
  % loop 10
  \draw (2.25,1) circle (.25)node {$\tilde{\gamma_{10}}$};
  \draw (2.25+0.25*cos{315},1+0.25*sin{315})--(3+.25*cos{225},1+.25*sin{225});
  % chip on loop 10
  \filldraw(2.25+.25*cos{45},1+.25*sin{45}) circle (.02) node  [right]{1};
  % loop 9
  \draw (3,1) circle (.25)node {$\tilde{\gamma_9}$}; 
  \draw (3+0.25*cos{315},1+0.25*sin{315})--(3.75+.25*cos{225},1+.25*sin{225});
  % chip on 9th loop
  \filldraw(3+.25*cos{135},1+.25*sin{135}) circle (.02) node  [left]{1};
  % loop 8
  \draw (3.75,1) circle (.25)node {$\tilde{\gamma_8}$};
  % chip on loop 8
  \filldraw (3.75+.25*cos{315},1+.25*sin{315}) circle (.02) node  [below]{1};
  % TOP ROW OF LOOPS
  % Loop 1
  \draw (0,2) circle (.25cm)node {$\tilde{\gamma_1}$}; 
  \draw (0.25*cos{315},2+0.25*sin{315})--(.75+.25*cos{225},2+.25*sin{225});
  % chip on loop 1
  \filldraw (0+.25*cos{225},2+.25*sin{225}) circle (.02) node [below]{0};
  % loop 2
  \draw (.75,2) circle (.25)node {$\tilde{\gamma_2}$}; 
  \draw  (.75+.25*cos{315},2+0.25*sin{315})--(1.5+.25*cos{225},2+.25*sin{225});
  % chip on loop 2
  \filldraw (.75+.25*cos{315},2+.25*sin{315}) circle (.02) node  [below]{1};
  % loop 3
  \draw (1.5,2) circle (.25)node {$\tilde{\gamma_3}$}; 
  \draw  (1.5+0.25*cos{315},2+0.25*sin{315})--(2.25+.25*cos{225},2+.25*sin{225});
  % chip on loop 3 FREE ONE
  \filldraw (1.5+.25*cos{135},2+.25*sin{135}) circle (.02)node [left]{1}; 
  %\draw [ultra thick,domain=90:180, <->] plot  ({1.5+.2*cos(\x)}, {2+.2*sin(\x)});
  % loop 4
  \draw (2.25,2) circle (.25)node {$\tilde{\gamma_4}$}; 
  \draw  (2.25+0.25*cos{315},2+0.25*sin{315})--(3+.25*cos{225},2+.25*sin{225});
  % chip on loop 4
  \filldraw (2.25+.25*cos{135},2+.25*sin{135}) circle (.02) node [left]{1};
  % loop 5
  \draw (3,2) circle (.25) node {$\tilde{\gamma_5}$}; 
  \draw  (3+0.25*cos{315},2+0.25*sin{315})--(3.75+.25*cos{225},2+.25*sin{225});
  % chip on loop 5
  \filldraw (3+.25*cos{45},2+.25*sin{45}) circle (.02) node [right]{1};
  % loop 6
  \draw (3.75,2) circle (.25) node {$\tilde{\gamma_6}$};
  % chip on loop 6
  \filldraw (3.75+.25*cos{225},2+.25*sin{225}) circle (.02) node[below]{1};
  % LAST LOOP UPSTAIRS AND CURVES
  \draw (4.5,1.5+.25*sin{315}) ellipse (.1875 and .375)node
  {$\tilde{\gamma_7}$};
  % \draw [domain=0:90] plot ({1.1*cos(\x)}, {-6+1.1*sin(\x)});
  \draw (4.5,1.5+.375+.25*sin{315} )%
  arc (0:90:2+.25*sin{315} -1.5-.375-.25*sin{315} );
  \draw (4.5,1.5-.375+.25*sin{315} )%
  arc (0:-90:2+.25*sin{315} -(1.5+.375+.25*sin{315} );
  \draw (3.75+.25*cos{315},2+.25*sin{315})%
  --(4.5-2-.25*sin{315}+1.5+.375+.25*sin{315},2+.25*sin{315});
  \draw (3.75+.25*cos{315},1+.25*sin{315})%
  --(4.5-2-.25*sin{315}+1.5+.375+.25*sin{315},1+.25*sin{315});
  % chip on last loop
  \filldraw (4.5,1.5+.25*sin{315}-.375) circle (.02) node [above]{1};   
  %Gamma upstairs
  \draw (4.9,1.5+.25*sin{315}) node {$\tilde\Gamma$};
  \draw[thick, ->] (4.9, 1) -- (4.9,.25);
  \draw (4.9,.625) node [right]{$\varphi$};
\end{tikzpicture}
  \caption{A Prym divisor on the 4-gonal folded chain of 13
   loops and
    its image under $\varphi_{*}$ on the 4-gonal chain of 7 loops.}
  \label{fig:2}
\end{figure}

The \define{torsion} of a loop $\gamma_a$ is the least positive
integer $k$ such that $\ell_a+m_a$ divides $k\cdot m_a$, where $m_a$
and $\ell_a$ are the lengths of the lower and upper arcs of $\gamma_a$
respectively.  The chain of loops is \define{uniform $k$-gonal} if each
loop has torsion $k$. Note that a uniform $k$-gonal chain of loops is
indeed a $k$-gonal metric graph in the sense of \cite[Section
1.3.2]{ABBR152}.  We say that a double cover as above is uniform
$k$-gonal if $\Gamma$ is, but note that $\wti\Gamma$ is not in itself
uniform $k$-gonal since the loop $\ti\gamma_g$ has torsion $2$.
% \steven{Should we note for $k=2$ it is uniform and that this
% specific case corresponds to hyperelliptic curves?}

\subsection{Prym tableaux}\label{sec:prym-tableaux}
We study divisors only indirectly, making use of a correspondence
between sets of divisors on chains of loops and Young tableaux as
introduced in \cite{pflueger2017special, len2019skeletons}; here we shall recall
only the essential definitions and introduce some helpful notation.

Let $[n] = \set{1,2,\ldots,n}$.  Given points
$(x,y), (x',y') \in \N^2$, which we call \define{boxes}, we say that
$(x,y)$ is \define{below} $(x',y')$ if $x \leq x'$, $y \leq y'$, and
$(x,y) \neq (x',y')$.  For our purposes, a \define{tableau} on a
subset $\lambda \subset \N^2$ is a map $t \maps \lambda \to [n]$
satisfying the \define{tableau condition}: 
\begin{quotation}
  for all boxes $(x,y)$ and $(x',y')$ in $\lambda$, if $(x,y)$ is
  below $(x',y')$, then $t(x,y) < t(x',y')$.
\end{quotation} We refer to the elements in the codomain of $t$ as
\define{symbols}.  Observe that if $\lambda$ is a partition of $n$ and
$t$ is injective, then $t$ is a standard Young tableau in the usual
sense.

A tableau $t$ is a \define{($k$-uniform) displacement tableau} if it
also satisfies the \define{displacement condition}:
\begin{quotation}
  whenever $t(x,y) = t(x',y')$, we have that
  $x-y\equiv x'-y' \pmod k$.\footnote{Or equivalently, if $(x,y)$ and
    $(x',y')$ contain the same symbol, then $(x,y)$ and $(x',y')$ must
    be separated by a lattice distance that is a multiple of $k$.}
\end{quotation}
This condition partitions $\lambda$ into $k$ regions.  To be precise,
we define the \define{$i$-th diagonal modulo $k$}, denoted by
$D_{i,k}$, to be the set of boxes
$\set{(x,y)\in\lambda \given x - y \equiv i \pmod k}$; then $\lambda$
is the disjoint union of $D_{i,k}$ for $i \in \Z/k\Z$, and the fiber
of each symbol of $t$ is contained in some $D_{i,k}$.  Co-opting
earlier terminology, we also call $k$ the \define{torsion} of $t$.

The $n$-th \define{anti-diagonal} $A_n$ is the set of all boxes
$(x,y)$ such that $x + y = n + 1$.  Define the \define{lower triangle
  of size $n$} to be $T_n \coloneq \bigcup_{i=1}^{n} A_i$.  For
example, \cref{fig:example-tableau} shows a lower-triangular
displacement tableau of size 6 and torsion 3.\footnote{We adopt the
  French notation, where the bottom-left box is $(1,1)$, the first
  coordinate increases to the right, and the second coordinate
  increases upwards.}
%,i.e., a tableau on the partition $(6,5,4,3,2,1)$
% \yoav{(on Wikipedia the values of the partition decrease from left to
%   right. Check the notation that was used in Pflueger,
%   Jensen--Ranganathan, and my paper. Also, do the values of the
%   %   partition represent width of rows or height of columns?)}
$D_{1,3}$ is blue, $A_6$ is red, and their intersection is purple.  Every
box $(x,y)$ here not colored red or purple is below (some box of) $A_6$.

\begin{figure}[htb]
  \centering
  \ytableausetup{centertableaux}
  \begin{ytableau}
    *(c3)11\\
    9&*(c2)10\\
    7&*(c1)8&*(c2)9\\
    *(c1)5&6&7&*(c3)8\\
    3&4&*(c1)5&6&*(c2)7\\
    1&*(c1)2&3&4&*(c1)5&*(c2)6
  \end{ytableau}
  \caption{A typical example of a lower-triangular displacement
    tableau of size 6 with torsion 3.}
  \label{fig:example-tableau}
\end{figure}

As explained in \cite[Section 3]{pflueger2017special}, $k$-uniform displacement tableaux on the rectangle
$[g-d+r]\times[r+1]$ with codomain $[g]$ give rise to divisors of degree $d$ and rank
at least $r$ on the uniform $k$-gonal chain of $g$ loops, as we now recall.  The
location $(x,y)$ of the symbol $a \in [g]$ in the tableau indicates where to place a
chip on the $a$-th loop.  Whenever $a$ does not appear in the tableau, the chip may
be placed arbitrarily on that loop, thereby allowing the locus a single degree of
freedom.  Otherwise, the $a$-th loop will have a chip at distance $m_a\cdot (x-y)$
counter-clockwise from its rightmost vertex (where the loops are arranged from left
to right, as in the bottom of \cref{fig:2}).  Finally place $d-g$ chips at the rightmost vertex of
the $g$-th loop.  The displacement condition guarantees that this is well-defined
when a symbol appears in the tableau more than once.

The tableau--divisor correspondence naturally extends to the folded chain of loops,
although the chips on the lower loops of $\widetilde\Gamma$ (as depicted in \cref{fig:2}) are measured
\emph{clockwise} from the \textit{leftmost} vertex, and the stack of $d-g$ chips are
placed at the \textit{leftmost} vertex of the $(2g-1)$-th loop.  As the genus of
the folded chain $\wti \Gamma$ is $2g-1$, the symbols should be taken from $[2g-1]$
and the domain should have shape $[2g-1-d+r]\times[r+1]$.  Since the parity of the $g$-th loop is
different  than the rest, the fiber of $g$ must be contained in $D_{i,2}$ for
some $i$.  By a slight abuse of terminology, we shall still refer to such tableaux as
``$k$-uniform.''

%We need to impose extra conditions on the tableau so that the tableau--divisor correspondence would generate Prym divisors. 
We wish to produce Prym divisors; these map down to $K_\Gamma$ and so must have
degree $2g-2$.  Hence, any tableau that yields Prym divisors under the correspondence
must be defined on the square domain $[r+1] \times [r+1]$.  Moreover, the
counter-clockwise distance of the chip on the $a$-th loop (for $a \in [g-1]$) must
equal the clockwise distance of the chip on the $(2g-a)$-th loop.  This motivates the
following \define{Prym condition}:
\begin{quotation}
  $t(x,y)=2g - t(x',y')$ only if $(x,y)$ and $(x',y')$ both lie in the
  same diagonal modulo $k$.
\end{quotation}

\begin{definition}
A tableau $t$ is \define{Prym of type $(g,r,k)$} if it
has shape $[r+1]\times [r+1]$ and codomain $[2g-1]$, it is $k$-uniform
(see above), and it satisfies the Prym condition.
\end{definition}

The two tropical Prym varieties arising from a folded chain of loops  are distinguished by the parity of the rank of the divisors that they classify \cite[Theorem 5.3.8]{len2019skeletons}. 
%By convention, for a given  $r$, we  always refer to the component whose parity coincides with $r$ modulo $2$. 
The parity, in turn, is determined by the placement of the chip on the $g$-th loop,
or equivalently, the position of the symbol $g$ in the tableau.  We denote by $P(t)$
the set of Prym divisors obtained from $t$ via the tableau--divisor correspondence
whose rank coincides with $r$ modulo $2$.  Explicitly, if $t^{-1}(g)$ is contained in
$D_{r,2}$, then $P(t)$ coincides with the set of divisors obtained from the
correspondence. If $t^{-1}(g)$ is contained in $D_{r+1,2}$, then $P(t)$ is
empty.\footnote{The special role that the symbol $g$ plays in determining $P(t)$ will
  cause minor headaches in \cref{sec:reflective}, but thereafter, we avoid the issue
  entirely by working (almost) exclusively with a different sort of tableau whose
  symbols only go up to $g-1$.}  Finally, if $t^{-1}(g)$ is empty, then $P(t)$ is a
proper subset of the divisors obtained from the correspondence. Either way, $P(t)$ is
a cell in the Prym variety.

%So while a Prym tableau $t$
%of type $(g,r,k)$ determines a set of Prym divisors of rank at least
%$r$, the parity is only equal to the parity of $r$ if
%$t^{-1}(g) \subset D_{r,2}$.  We denote by $P(t)$ the set of divisors
%in $V^r(\Gamma,\varphi)$ determined by $t$ via the tableau--divisor
%correspondence described above, and we call it a \define{cell} of
%$V^r(\Gamma,\varphi)$.  Thus, if $t^{-1}(g)$ is nonempty and contained
%in $D_{r+1,2}$, then $P(t)$ is empty.

\begin{remark}\label{rem:2}
  By \cite[Corollary 5.3.10]{len2019skeletons}, for fixed gonality $g$
  and torsion $k$, the Prym--Brill--Noether locus
  $V^r(\Gamma,\varphi)$ is the union of the subspaces $P(t)$, where $t$
  ranges over the Prym tableaux of type $(g,r,k)$. Moreover, it suffices to consider the tableaux for which the symbol $g$ is  in the ``correct'' diagonal modulo 2, namely, $D_{r,2}$.
%   (for which $t^{-1}(g) \subset D_{r,2}$).
\end{remark}

\section{Dimensions of Prym--Brill--Noether loci}

Our primary focus in this section is to prove \cref{thm:tropicalPBN}
by constructing Prym tableaux that---in a sense we shall make
precise---minimize the number of symbols used.  In
\cref{sec:reflective}, we describe a restricted class of Prym
tableaux, called reflective, that are easier to work with and
determine sets of divisors that are maximal with respect to
containment.  In \cref{sec:tropical-dim-proof}, we compute the largest
dimension of any cell determined by a reflective tableau (of fixed
type) and thereby compute the dimension of the Prym--Brill--Noether
locus.

\subsection{Reflective tableaux}\label{sec:reflective}

Fix a Prym tableau $t$ of type $(g,r,k)$.  We define the
\define{codimension} of $t$ to be number of integers $a \in [g-1]$ for
which either of the symbols $a$ or $2g-a$ appears in $t$.  By the
tableau--divisor correspondence, the codimension of $t$ coincides with
the codimension of the cell $P(t)$ relative to the Prym variety
(provided that $t^{-1}(g) \subset D_{r,2}$); indeed, there are at most
$g-1$ degrees of freedom---one for each loop $\ti\gamma_a$ with
$a \in [g-1]$---and the chip on the $a$-th loop is free just in case
neither $a$ nor $2g-a$ appears in the tableau.  Then the path to
proving \cref{thm:tropicalPBN} is clear:

%\begin{remark}\label{rem:6}
\begin{quotation}
  To compute the codimension of $V^r(\Gamma,\varphi)$, it suffices to
  compute the minimal codimension of any Prym tableau of type
  $(g,r,k)$.
  \end{quotation}
%\end{remark}

To that end, it is beneficial to consider tableaux with a stronger symmetry than Prym tableaux. 
%As initially introduced in  \cite{len2019skeletons} \caelan{this is right,  right?} is to consider Prym tableaux that are symmetrical in a sense we now describe.  
Given $\lambda \subset [r+1]\times[r+1]$, consider the map
$\rho\maps\lambda\to [r+1]\times[r+1]$ defined by $\rho(x,y)=(r+2-y, r+2-x)$; in
other words, $\rho$ picks out the box that is the reflection of $(x,y)$ across the
\define{main anti-diagonal}, $A_{r+1}$.  Fixing a map $t \maps \lambda \to [2g-1]$,
we say that a box $(x,y) \in \lambda$ is \define{reflective} (in $t$) provided that
$\rho(x,y) \in \lambda$ and $t(x,y) = 2g - t(\rho(x,y))$, i.e., the symbol in the box
is the dual of the symbol in its reflection.

\begin{definition}
  A displacement tableau $t$ is said to be \define{reflective} if
  every box of $t$ is reflective.
\end{definition}

Note that reflective tableaux defined on $[r+1]\times[r+1]$ are Prym.
Moreover, if $t$ is such a tableau, then each box along the main
anti-diagonal of $t$ must contain the symbol $g$, and $g$ appears
nowhere else.  In particular, $t^{-1}(g) \subset D_{r,2}$, so $P(t)$
is nonempty.

Our goal in the remainder of this section is to prove that, in our
search for Prym tableaux of minimal codimension, it suffices to
restrict our attention to the class of reflective tableaux.
\cref{prop:reflective} makes this precise, although we first need the
notion of tableaux containment that the next definition provides.

\begin{definition}\label{def:5}
  Given Prym tableaux $t$ and $s$ of type $(g,r,k)$, we say that $t$
  \define{dominates} $s$ if $g \in t(D_{i,2})$ implies that $g \in s(D_{i,2})$ and
  if, for any $a \neq g$ and $i \in \Z/k\Z$, $a \in t(D_{i,k})$ implies that either
  $a \in s(D_{i,k})$ or $2g-a \in s(D_{i,k})$.  If $t$ and $s$ each dominate the
  other, then we call them \define{equivalent}.
\end{definition}

It follows from the tableau--divisor correspondence that $t$ dominates $s$ only if
$P(t) \supset P(s)$.  Indeed, this containment holds whenever each chip that is fixed
in $P(t)$ is also fixed in $P(s)$ at the same coordinate.\footnote{The condition on
  the symbol $g$ in \cref{def:5} ensures that if $P(t)$ is empty, then so is $P(s)$.}
It follows that $t$ and $s$ are equivalent only if $P(t) = P(s)$.  If $s$ dominates
$t$, then $\codim(s) \leq \codim(t)$.  Therefore, for the purpose of computing the
dimension of $V^r(\Gamma,\varphi)$, we may restrict our attention to tableaux that
are maximal with respect to the partial order given by dominance.  The main result of
this section is the following.

\begin{proposition}\label{prop:reflective}
  Let $t$ be a Prym tableau such that $t^{-1}(g) \subset D_{r,2}$.  Then there exists
  a reflective tableau $s$ that dominates $t$.
\end{proposition}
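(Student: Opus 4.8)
The plan is to symmetrize $t$ across the main anti-diagonal by an explicit ``reflect-and-dualize'' construction that, at each box below the anti-diagonal, keeps the \emph{smaller} of the original symbol and the dual of its mirror image. Recalling that $T_r=\{(x,y):x+y\le r+1\}$ is the (open) lower-left triangle and that $\rho(x,y)=(r+2-y,r+2-x)$, I would define
\[
  s(x,y)=
  \begin{cases}
    \min\{t(x,y),\,2g-t(\rho(x,y))\} & \text{if }(x,y)\in T_r,\\
    g & \text{if } x+y=r+2,\\
    2g-s(\rho(x,y)) & \text{if } x+y\ge r+3.
  \end{cases}
\]
By construction every box of $s$ equals $2g$ minus the symbol in its mirror image and $A_{r+1}$ carries $g$, so $s$ is reflective; hence, once I verify that $s$ is a genuine displacement tableau, it is automatically Prym (as noted just after the definition of reflective tableau).

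First I would record the decisive order relation: for $(x,y)\in T_r$ we have $x+y\le r+1<r+2$, so $(x,y)$ lies below $\rho(x,y)$, whence $t(x,y)<t(\rho(x,y))$. This single inequality does the heavy lifting. It forces the minimum below $g$: if both $t(x,y)\ge g$ and $2g-t(\rho(x,y))\ge g$ (i.e.\ $t(\rho(x,y))\le g$), then $g\le t(x,y)<t(\rho(x,y))\le g$, a contradiction; hence $s<g$ on $T_r$ and, dually, $s>g$ on the upper triangle. The same inequality yields the tableau condition on $T_r$: if $(x,y)$ is below $(x',y')$ then $t(x,y)<t(x',y')$ and (since $\rho$ reverses the order) $2g-t(\rho(x,y))<2g-t(\rho(x',y'))$, so $s(x,y)$ is strictly smaller than both arguments of $s(x',y')$ and hence than their minimum. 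Because the lower triangle carries symbols $<g$, the anti-diagonal carries $g$, and the upper triangle carries symbols $>g$, the tableau condition across $A_{r+1}$ is immediate, and the displacement condition for the symbol $g$ holds since its fiber is $A_{r+1}\subseteq D_{r,2}$.

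The two structural points are the displacement condition for $s$ and the dominance of $s$ over $t$, and both use the Prym condition for $t$ together with the fact that $\rho$ preserves diagonals modulo $k$ (as $(r+2-y)-(r+2-x)=x-y$). For dominance I would note that each value of $s$ is justified by $t$ in the same residue class: if $s(x,y)=t(x,y)$ the symbol already appears in $t$ in $D_{(x-y),k}$, while if $s(x,y)=2g-t(\rho(x,y))$ then $t(\rho(x,y))$ is its dual appearing in $t$ in that same diagonal; this is exactly the second alternative of \cref{def:5}. For the displacement condition, suppose $s(x,y)=s(x',y')=b<g$, lying in diagonals $D_{i,k}$ and $D_{i',k}$. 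By the previous observation, in each diagonal either $b$ or $2g-b$ occurs in $t$; if the same symbol occurs in both, the displacement condition for $t$ gives $i\equiv i'$, and if $b$ occurs in one and $2g-b$ in the other, the Prym condition for $t$ forces $b$ and $2g-b$ into a common diagonal modulo $k$, again giving $i\equiv i'$. Thus $s$ is a reflective (hence Prym) tableau dominating $t$ on all symbols $a\ne g$.

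I expect the displacement check to be the main obstacle, and it is exactly where the construction has to be clever: the naive fold $a\mapsto\min\{a,2g-a\}$ destroys the tableau condition, so one instead compares $t(x,y)$ against the dual of its \emph{mirror image}, and it is the combination of the relation $(x,y)<\rho(x,y)$ (which both keeps values below $g$ and preserves monotonicity) with the Prym condition (which reconciles the diagonals of $b$ and $2g-b$) that makes $s$ well defined. Finally, the hypothesis $t^{-1}(g)\subseteq D_{r,2}$ enters only to supply the symbol condition of dominance: since $s$ places $g$ along $A_{r+1}\subseteq D_{r,2}$, I need $g\in t(D_{r,2})$, which holds because every copy of $g$ in $t$ lies in $D_{r,2}$; by \cref{rem:2} it suffices to treat the tableaux in which $g$ actually occurs in this diagonal, so this case is the relevant one.
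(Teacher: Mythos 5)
Your construction is correct, but it takes a genuinely different route from the paper's. The paper proves \cref{prop:reflective} by an iterative ``loose box'' algorithm: it grows a partition $\kappa_i \subset T_r$ upward, at each step selects the minimal symbol among $s_i(L_i) \cup (2g - s_i(\rho(L_i)))$ over the loose boxes $L_i$, overwrites those boxes and their reflections with that symbol and its dual, and checks inductively that each $s_{i+1}$ is a Prym tableau dominating $s_i$; only at the very end is the anti-diagonal overwritten with $g$. You replace all of this with the closed formula $s(x,y)=\min\{t(x,y),\,2g-t(\rho(x,y))\}$ on $T_r$, together with $g$ on $A_{r+1}$ and reflection above, and you verify everything at once. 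Your verification is sound: the pivotal inequality $t(x,y)<t(\rho(x,y))$ (valid since every box of $T_r$ lies below its mirror image) keeps the minimum in $[g-1]$ and, because $\rho$ reverses the below-relation, gives the tableau condition on $T_r$; and since $\rho$ preserves diagonals modulo $k$, your four-way case analysis (same symbol twice, or a symbol against its dual) correctly reduces the displacement condition for $s$ to the displacement and Prym conditions for $t$, and the same observation gives dominance for all symbols $a \neq g$. In fact your formula appears to compute exactly the output of the paper's algorithm---on \cref{ex:1} it reproduces the final tableau of \cref{fig:reflective} box by box---so what your approach buys is brevity and transparency (no induction, no intermediate tableaux), while the paper's approach stays inside the upward-displacement formalism it already imports and uses elsewhere. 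Two caveats, neither fatal. First, you only check the tableau condition within $T_r$ and ``across'' $A_{r+1}$; the case of two comparable boxes both strictly above the anti-diagonal should be stated explicitly, though it is immediate from your duality (apply $\rho$ and the order-reversing map $a \mapsto 2g-a$), and the same remark applies to displacement and dominance for symbols $a > g$. Second, your closing point about the symbol $g$ identifies the genuinely delicate spot: when $t^{-1}(g)=\emptyset$, the literal $g$-clause of \cref{def:5} fails for any reflective $s$, since $s$ places $g$ in $D_{r,2}$. But the paper's own final step has the identical issue (its footnote excludes only the case $g \in t(D_{r+1,2})$), so your explicit appeal to \cref{rem:2} to reduce to tableaux in which $g$ actually occurs in $D_{r,2}$ leaves your argument no less careful than the paper's.
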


The following definition from \cite{pflueger2017special} will be used
repeatedly during the proof. Given a partition $\lambda$ and a subset
$S \subset \Z/k\Z$, the \define{upward displacement of $\lambda$ by
  $S$}, denoted  $\disp(\lambda, S)$, is equal to $\lambda \cup L$,
where $L$ consists precisely of those boxes $(x,y) \nin \lambda$ such
that all of the following conditions hold:
\begin{itemize}
\item $(x-1,y) \in \lambda$ or $x=1$,
\item $(x,y-1) \in \lambda$ or $y=1$, and
\item $(x,y) \in D_{i,k}$ for some $i \in S$.
\end{itemize}
The boxes in $L$ are known as the \define{loose boxes of $\lambda$
  with respect to $S$}.  When $S = \Z/k\Z$, we use the shorthand
$\disp(\lambda)$ and note the following: if $\lambda$ is a partition,
then so is $\disp(\lambda)$; $L$ is nonempty; and every box in
$\N^2 \setminus \disp(\lambda)$ is above some box in $L$.  The
usefulness of this operation on partitions is made evident in the
following example, which outlines the subsequent proof of Proposition
\ref{prop:reflective}.  

\begin{example}\label{ex:1}
  Consider the first Prym tableau of type $(g,r,k)=(11,4,3)$ in the
  sequence illustrated in Fig.~\ref{fig:reflective}.  This tableau is
  far from being reflective, but at each step we make small changes so
  that the resulting tableau is closer to being reflective and
  dominates the preceding one.
  
  At each step, the boxes previously dealt with are colored blue.  We
  look at the symbols in the loose boxes with respect to the
  lower-left blue partition and choose the minimum $a$; we look at the
  symbols contained in the reflection of the loose boxes and choose
  the maximum $b$; then denote by $c$ the minimum of $a$ and $2g-b$.
  Now, wherever $c$ or $2g - c$ appears, color the corresponding box
  and its reflection red.  To produce the next tableau in the
  sequence, replace each symbol in the red-colored boxes with $c$ or
  $2g-c$ as appropriate.  The final tableau is reflective and
  dominates the initial tableau.

  \begin{figure}[htb]
    \input{figures/ex-reflective-proof.tex}
    \caption{Replacing a non-reflective tableau with a dominant
      reflective one.}
    \label{fig:reflective}
  \end{figure}
\end{example}

The basic operation of the algorithm is to repeatedly \define{reflect}
symbols, i.e., given a box $(x,y)$, to insert the dual symbol,
$2g - t(x,y)$, into the reflection, $\rho(x,y)$.  The following
lemma ensures that the result is still a Prym tableau, granted that
the tableau condition holds; then the proof of Proposition
\ref{prop:reflective} will make the rest of the algorithm precise.

\begin{lemma}\label{lem:5}
  Given a Prym tableau $t$, fix a box $(x,y)$.  Then the map $s$
  obtained by defining
  \begin{equation*}
    s(\omega) =    
    \begin{cases}
      2g - t(x,y) &\text{for } \omega = \rho(x,y) \\
      t(\omega) &\text{otherwise}
    \end{cases}
  \end{equation*}
  satisfies the displacement and Prym conditions.
\end{lemma}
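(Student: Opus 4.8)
The plan is to exploit a single geometric fact: the reflection $\rho$ preserves diagonals exactly. Indeed, for any box $(x,y)$ its image $\rho(x,y)=(r+2-y,\,r+2-x)$ satisfies $(r+2-y)-(r+2-x)=x-y$, so $(x,y)$ and $\rho(x,y)$ lie in the same diagonal modulo any integer, in particular modulo $k$ and modulo $2$. Write $\omega_0=\rho(x,y)$ and $b=s(\omega_0)=2g-t(x,y)$, which again lies in $[2g-1]$. Since $s$ agrees with $t$ on every box except $\omega_0$, in checking either condition for $s$ the only pairs that can produce a new violation are those having $\omega_0$ as one member: for every pair avoiding $\omega_0$ we have $s=t$ on both boxes, and $t$ already satisfies both conditions; overwriting the old value at $\omega_0$ only removes coincidences, which can never create a violation. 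Thus the argument reduces to controlling pairs $(\omega_0,q)$ with $q\neq\omega_0$, the degenerate case $q=\omega_0$ being trivial.

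For the displacement condition, suppose $s(\omega_0)=s(q)$, i.e.\ $t(q)=b=2g-t(x,y)$. If $b\neq g$, this is precisely the hypothesis of the Prym condition for $t$ applied to $(x,y)$ and $q$, which places them in the same diagonal modulo $k$; combined with the fact that $\omega_0$ and $(x,y)$ share a diagonal, this puts $\omega_0$ and $q$ in the same diagonal modulo $k$, as required. If instead $b=g$, then $t(x,y)=g$ as well, so $(x,y)$ and $q$ both carry the symbol $g$; the displacement condition on $g$ (imposed only modulo $2$) places them in the same diagonal modulo $2$, and transporting through $\rho$ again gives $\omega_0$ and $q$ in the same diagonal modulo $2$.

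For the Prym condition, suppose $s(\omega_0)=2g-s(q)$, i.e.\ $t(q)=t(x,y)$. When $t(x,y)\neq g$, the ordinary displacement condition for $t$ forces $(x,y)$ and $q$ into a common diagonal modulo $k$, and I conclude as before that $\omega_0$ and $q$ share a diagonal modulo $k$. When $t(x,y)=g$ we have $s(\omega_0)=s(q)=g$, and the Prym condition is not imposed on a pair of boxes both labelled $g$; this must be the correct reading, since otherwise the main anti-diagonal of a reflective tableau, all of whose boxes carry $g$, would violate it, contradicting the already-asserted fact that reflective tableaux are Prym. Hence there is nothing to check.

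The crux, and the step I expect to demand the most care, is the bookkeeping duality between the two conditions together with the exceptional role of $g$: verifying the displacement condition of $s$ invokes the \emph{Prym} condition of $t$ (and, in the exceptional case, the modulo-$2$ displacement of $g$), whereas verifying the Prym condition of $s$ invokes the ordinary \emph{displacement} condition of $t$. Getting the modulus right---$k$ for generic symbols, $2$ for $g$---and correctly treating $g$ as self-dual and Prym-exempt is where the argument can go astray; the reflection identity that $\rho$ preserves $x-y$ is precisely what closes every case.
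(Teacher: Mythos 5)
Your proof is correct and follows essentially the same route as the paper's: reflection preserves the quantity $x-y$, so $\rho(x,y)\in D_{x-y,k}$, and then the displacement condition for $s$ follows from the Prym condition for $t$ while the Prym condition for $s$ follows from the displacement condition for $t$. Your extra case analysis for the symbol $g$ (with its mod-$2$ displacement and exemption from the Prym condition) is more careful than the paper's one-line treatment, which silently skips this case, and your reading is the consistent one, since otherwise a reflective tableau, whose entire main anti-diagonal carries the symbol $g$, would fail to be Prym for $k>2$.
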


\begin{proof}
  The only box at which either of the conditions might fail is at
  $\rho(x,y)$.  However, taking the difference of the coordinates of
  $\rho(x,y) = (r+2-y,r+2-x)$, we find that $\rho(x,y) \in D_{x-y,k}$.
  The Prym condition is immediately satisfied, and it is not hard to
  see that, since any other box containing the symbol $2g - t(x,y)$
  would need to be in $D_{x-y,k}$, the displacement condition is also
  satisfied.
\end{proof}

\begin{proof}[Proof of \cref{prop:reflective}]
  Let  $\lambda \coloneq [r+1]\times[r+1]$ be the domain of $t$, and let
  $s_0 \coloneq t$. We describe an algorithm which at each step, given a Prym tableau
  $s_i$, produces a Prym tableau $s_{i+1}$ that dominates $s_i$ and is reflective on
  a larger subset of $\lambda$.  After a finite number of steps, we obtain a Prym
  tableau $s_m$ that is reflective away from the main anti-diagonal and that
  dominates $t$ by transitivity.  In the final step, the symbols along the main
  anti-diagonal of $s_m$ are replaced with $g$ to obtain a reflective tableau $s$.

  \textit{Induction hypotheses.}  Suppose that after the $i$-th step we have a Prym
  tableau $s_i$ that dominates $s_{i-1}$.  Furthermore, suppose that we have an
  integer $0\leq n_i\leq g-1$ and a subset $\kappa_i \subset \lambda$ (where, by
  convention, $n_0=0$ and $\kappa_0=\emptyset$) such that
  \begin{itemize}
  \item $\omega \in \kappa_i$ just if $\omega \in T_r$ and $s_i(\omega) \leq n_i$,
  \item $\omega \in \rho(\kappa_i)$ just if $\omega \in \rho(T_r)$ and
    $s_i(\omega) \geq 2g - n_i$,
  \item $s_i\restrict{\kappa_i \cup \rho(\kappa_i)}$ is reflective.\footnote{In
      \cref{ex:1}, $\kappa_i \cup \rho(\kappa_i)$ is represented by the blue boxes.}
  \end{itemize}

  If $\kappa_i = T_r$, then $i = m$ and we are ready to perform the final step.
  Otherwise, note that $\kappa_i$ must be a partition by the tableau condition.
  Therefore, let $L_i$ be the set of loose boxes of $\kappa_i$ that lie below the
  main anti-diagonal, and observe that $L_i$ is nonempty.

  \textit{Definition of $s_{i+1}$.}  Consider the minimal positive integer $n_{i+1}$
  among the set of symbols $s_i(L_i) \cup (2g - s_i(\rho(L_i)))$.  We claim that
  $n_{i+1}$ exists and is at most $g-1$.  Indeed, given any $\omega \in L_i$, if
  $s_i(\omega) \leq g-1$, then we are done.  Otherwise, $s_i(\omega) \geq g$; since
  $\omega$ lies below its reflection $\rho(\omega)$, it follows that
  $s_i(\rho(\omega)) \geq g+1$; moreover, $s_i$ is Prym, so it must be the case that
  $s_i(\rho(\omega)) \leq 2g-1$; from both of these inequalities, we find that
  $1 \leq 2g-s_i(\rho(\omega)) \leq g-1$.  In either case, the claim holds.  We also
  know from the assumptions on $\kappa_i$ and its reflection that
  $n_{i+1} \geq n_i+1$.
  
  We now define
  \begin{equation*}
    N_{i+1}\coloneq \set{ \omega\in L_i \given s_i(\omega) = n_i \text{ or }
      s_i(\rho(\omega)) = 2g - n_i}.
  \end{equation*}
  In going from $s_i$ to $s_{i+1}$, we modify only the boxes in
  $N_{i+1} \cup \rho(N_{i+1})$.\footnote{In \cref{ex:1}, $N_{i+1} \cup \rho(N_{i+1})$
    is represented by the red boxes.}  In particular, the symbol $n_{i+1}$ is placed
  in each box of $N_{i+1}$ (that did not already contain it) while $2g - n_{i+1}$ is
  placed in $\rho(N_{i+1})$.  Thus, we define $s_{i+1}$ by
  \begin{equation*}
    \label{eq:10}
    s_{i+1}(\omega) = 
    \begin{cases}
      n_{i+1} &
      \text{for } \omega \in N_{i+1} \\
      2g - n_{i+1} &
      \text{for } \omega \in \rho(N_{i+1}) \\
      s_i(\omega) & \text{otherwise }
    \end{cases}.
  \end{equation*}
  
  \textit{Proof that $s_{i+1}$ is a Prym tableau.}  %First, we show that $s_{i+1}$ is  a Prym tableau.  
  By applying  \cref{lem:5} every time a symbol is replaced, we know that $s_{i+1}$
  satisfies the displacement and Prym conditions.  
%  To elaborate:, instead of replacing
%  every symbol at once, replace them one at a time.  Each replacement either does
%  nothing or replaces a symbol with the dual of the symbol in its reflection.
%  In the former case, the conditions are trivially satisfied; in the latter, we apply
%  \cref{lem:5} for the desired result.
  It remains to show that it satisfies the tableau condition at the modified
  boxes.  We shall consider only the case where $\omega\in N_{i+1}$; the case where
  $\omega\in \rho(N_{i+1})$ follows in a similar way.  Observe first that every box
  below $\omega$ contains a symbol that is smaller than $n_{i+1}$. Indeed, since
  $\omega\in L_i$, it follows that every box below $\omega$ lies in $\kappa_i$; since
  the maximum value of a symbol in $\kappa_i$ is $n_i$ and we know that
  $n_i<n_{i+1}$, every box below $\omega$ contains a symbol that is strictly smaller
  than $s_{i+1}(\omega)$.

  We also claim that the boxes immediately above $\omega$ contain symbols that are
  greater than $n_{i+1}$.  Writing $\omega=(x,y)$, observe that
  $(x+1,y) \nin L_i \cup \rho(L_i)$, so $s_{i+1}(x+1,y) = s_i(x+1,y)$.  By the
  tableau condition on $s_i$, we have that $s_i(x+1,y) > s_i(x,y)$.  Finally, $(x,y)$
  is in $L_i$ and $n_{i+1}$ was chosen to be minimal among the symbols of $L_i$ (in
  particular), so we get that $s_i(x,y) \geq n_{i+1} = s_{i+1}(x,y)$.  Chaining these
  inequalities together yields $s_{i+1}(x+1,y) > s_{i+1}(x,y)$; the same argument
  works for $(x,y+1)$.  Hence, $s_{i+1}$ satisfies the tableau condition and so is a
  Prym tableau.
  
  % However, there is a slight subtlety as the above paragraph is only valid when
  % $(x,y+1),(x+1,y)\in T_r$. Thus, it remains to show that the tableau condition is
  % satisfied when $(x,y+1),(x+1,y)\in A_{r+1}$. If $s_i(\omega)=n_{i+1}$, then each of
  % the symbols in $(x,y)$, $(x,y+1)$, and $(x+1,y)$ remain the same between $s_i$ and
  % $s_{i+1}$, so the tableau condition remains true. If $s_i(\omega)\neq n_{i+1}$,
  % then $n_{i+1}<s_i(\omega)<s_i((x,y+1)), s_i((x+1,y))$. Thus, the tableau condition
  % will remain true for $s_{i+1}$.
  
  \textit{Proof that $s_{i+1}$ dominates $s_i$.}  Let $\omega \in \lambda$.  If
  $s_{i+1}(\omega) \nin \set{n_{i+1}, 2g-n_{i+1}}$, then
  $s_{i+1}(\omega) = s_i(\omega)$.  This implies that for all symbols besides
  $n_{i+1}$ and $2g-n_{i+1}$ (including $g$), the conditions of \cref{def:5} are
  satisfied.  If $s_{i+1}(\omega) \in \set{n_{i+1}, 2g-n_{i+1}}$, then either
  $s_{i+1}(\omega) = s_i(\omega)$ or $s_{i+1}(\omega) = 2g - s_i(\rho(\omega))$.  In
  the first case, we are done as above; in the second case, the desired condition
  still holds on account of the fact that the dual of the symbol $s_{i+1}(\omega)$
  appears in $s_i$ and, in particular, is contained in $\rho(\omega)$, which occupies
  the same diagonal modulo $k$ as $\omega$.  Hence, $s_{i+1}$ dominates $s_i$.

  \textit{Proof that $s_{i+1}$ satisfies the induction hypotheses.}  Define
  $\kappa_{i+1}$ to be $\kappa_i \cup N_{i+1}$.  Then every box in $T_r$ that
  contains a symbol at most $n_i$ is in $\kappa_i$, while any box containing
  $n_{i+1}$ is in $N_{i+1}$.  Using the definition of loose boxes and the fact that
  $n_{i+1}$ minimizes the symbols in $L_i$, we find that no symbol strictly between
  $n_i$ and $n_{i+1}$ appears in $s_{i+1}$.  Moreover, any box in $T_r$ is above some
  box of $L_i$, so the tableau condition precludes $n_{i+1}$ from appearing in
  $T_r \setminus L_i$; if $n_{i+1}$ appears in $L_i$, then it appears in $N_{i+1}$ by
  definition.  From these observations, we find that $\kappa_{i+1}$ contains
  precisely those boxes of $T_r$ with symbols at most $n_i$.  A similar argument
  shows that $\rho(\kappa_{i+1})$ contains precisely those boxes of $\rho(T_r)$ with
  symbols at least $2g-n_i$.

  Finally, the restriction of $s_{i+1}$ to $\kappa_{i+1} \cup \rho(\kappa_{i+1})$ is
  reflective.  Indeed, it is reflective on $\kappa_i \cup \rho(\kappa_i)$ because
  $s_i$ is, and $s_{i+1}$ and $s_i$ agree on that subset.  Moreover, $s_{i+1}$ is
  reflective on $N_{i+1} \cup \rho(N_{i+1})$ by construction: every symbol in this
  subset is the dual of the symbol in its reflection.  Therefore, all the inductive
  hypotheses are satisfied.

  \textit{Final step.}  Since $\kappa_{i+1}$ strictly contains $\kappa_{i}$, after a
  finite number of steps $m$, we have that $\kappa_m = T_r$.  In other words, $s_m$
  is reflective everywhere but (possibly) the main anti-diagonal, $A_{r+1}$.  Then we
  replace the symbols in all of the boxes in $A_{r+1}$ with the symbol $g$; the
  resulting tableau $s$ is reflective and dominates $s_m$, completing the
  proof.\footnote{It may not be obvious at first glance why the statement of
    \cref{prop:reflective} requires that $t^{-1}(g) \subset D_{r,2}$.  If
    $g \in t(D_{r+1,2})$, then the algorithm described in this proof still produces a
    reflective tableau $s$.  However, the final step forces $g \in s(D_{r,2})$, so
    $s$ would fail to dominate $t$ in this case.}
\end{proof}

A reflective tableau is uniquely determined by its restriction to $T_r$, so we may as
well only consider this subset.

\begin{definition}
  A \define{staircase Prym tableau of type $(g,r,k)$} is a $k$-uniform displacement
  tableau $t \maps T_r \to [g-1]$.
\end{definition}
We extend all definitions regarding Prym tableaux to staircase Prym tableaux in the
natural way; for instance, denoting by $\hat{t}$ the reflective tableau which extends
a given staircase Prym tableau $t$, we define $P(t)$ to be just $P(\hat{t})$.
Certain other definitions become more intuitive: the codimension of $t$ is just the
number of distinct symbols appearing in $t$, and $t$ dominates another staircase Prym
tableau $s$ just in case $t(D_{i,k}) \subset s(D_{i,k})$.

\subsection{Proof of
  \cref{thm:tropicalPBN}}\label{sec:tropical-dim-proof}

Throughout this section, $\varphi\maps\wti\Gamma\to\Gamma$ will
represent a folded chain of loops of genus $g$, where the edge lengths
of $\Gamma$ are either generic or the torsion of each loop is $k$. For
the sake of brevity, we will refer to the folded chain of loops and
its corresponding Prym tableaux in the former case as \define{generic}
and in the latter as \define{$k$-gonal}.

The dimension of $V^r(\Gamma,\varphi)$ is known in the generic case
and when $k$ is even; see \cite[Theorem~6.1.4,
Corollary~6.2.2]{len2019skeletons}.  When $k$ is odd,
\cite[Remark~6.2.3]{len2019skeletons} provides an upper and a lower
bound for the dimension. In this section we show that the dimension of
$V^r(\Gamma, \varphi)$ in fact coincides with the lower bound.  We
restate the precise result here.

\tropicalPBN*

%\noindent As a consequence, we obtain an upper bound on the dimension
%of the Prym--Brill--Noether locus for generic $k$-gonal algebraic
%curves. \caelan{I don't think we need this sentence, since we've
%  already mentioned this consequence in the introduction (and later,
%  once we prove it).} \yoav{I agree, remove it.}
%\begin{remark}

%We have phrased this in terms of codimension rather than dimension to
%emphasize the close relationship between the codimension of $P(t)$ and
%the number of pairs of symbols in $t$ (see \cref{sec:reflective}).
%\end{remark}

To prove the theorem, we need to compute the minimal codimension of $P(t)$ over all
Prym tableaux $t$ of type $(g,r,k)$ such that $t^{-1}(g) \subset D_{r,2}$ (see
\cref{rem:2}).  By \cref{prop:reflective}, it suffices to consider staircase Prym
tableaux: given any Prym tableau (with the correct $g$-fiber), we apply the
reflection algorithm to obtain a dominating reflective Prym tableau. Per the
discussion at the end of \cref{sec:reflective}, it then suffices to consider the
staircase Prym tableau that constitutes its restriction to $T_r$.

%The second case in \cref{eq:4} corresponds to generic edge lengths.
The expression $\binom{r+1}{2}$ in the second case in \cref{eq:4} counts the number
of boxes in $T_r$.  In this subset, the lattice distance between any two boxes is at
most $2r-2 \leq 2l - 2 < k$, so each must contain a unique symbol; it follows that
the number of symbols in any such tableau is precisely $\binom{r+1}{2}$.  The same
reasoning explains the presence of the $\binom{l+1}{2}$ term in the first case: it
counts the number of symbols in $T_l$, which are all necessarily unique.  Any repeats
occur above $T_l$.  In fact, we claim that a tableau of minimal codimension contains
precisely $l$ new symbols on each subsequent anti-diagonal, of which there are $r-l$;
this accounts for the $l(r-l)$ term.  Precisely, we say that a set of symbols
$S \subset t(A_n)$ is \define{new} if $S \cap t(T_{n-1})$ is empty.  If our claim is
true, then the tableau depicted in \cref{fig:example-tableau}, which is a staircase
Prym tableau of type $(12,6,3)$,\footnote{In fact, it is staircase Prym of type
  $(g,6,3)$ for any $g \geq 12$.} has minimal codimension.
% Similarly, we say that a set of boxes $B \subset A_n$ is
% \textit{new} if $t(B)$ is new and $t\restrict{B}$ is injective.

\begin{proposition}
  \label{prop:1}
  Given a staircase Prym tableau $t$ of type $(g,r,k)$, there exist at
  least $l$ new symbols in $A_n$ for each $n \geq l + 1$.
\end{proposition}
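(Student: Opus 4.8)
The plan is to count symbols by their first appearance. Call a symbol $a$ \emph{introduced} on $A_n$ if the lowest anti-diagonal meeting its fiber is exactly $A_n$; since the fiber of $a$ is an antichain (equal symbols cannot be comparable), the notion ``$a$ is new on $A_n$'' in the statement coincides with ``$a$ is introduced on $A_n$,'' and the quantity to bound is the number of such symbols. The structural input I would use throughout is that, by the displacement condition, the whole fiber of a symbol lies in a single diagonal $D_{i,k}$; consequently two symbols introduced on $A_n$ that sit in different residues modulo $k$ are automatically distinct. Thus it suffices to exhibit $l$ boxes of $A_n$ that carry symbols introduced on $A_n$ and that lie in pairwise distinct diagonals modulo $k$.

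First I would establish a \emph{far-repeat} lemma pinning down where a repeated symbol may go. Suppose $(x,y)\in A_n$ and its symbol also occurs at $(x',y')\in T_{n-1}$. The two boxes are incomparable, lie in the same fiber, so $x'-y'\equiv x-y \pmod{k}$, and satisfy $x'+y'\le n<x+y$. Writing the displacement as $(p,q)$ and combining $p+q\equiv 0\pmod{k}$, $p+q>0$, and the sum inequality, a short computation forces one of two cases: either $(x',y')$ lies up-left of $(x,y)$ with $x-x'\ge l$, or it lies down-right with $y-y'\ge l$. In particular every old box of $A_n$ has $x\ge l+1$ or $x\le n-l$, so any box with $n-l<x\le l$ is introduced on $A_n$. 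Since the leftmost $l$ boxes of $A_n$ occupy $l$ distinct residues modulo $k$ (any two differ in the $x$-coordinate by at most $l-1<k/\gcd(2,k)$, too little to share a residue), this already proves the proposition whenever $l+1\le n\le 2l-1$.

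For the remaining range $n\ge 2l$ the positional obstruction disappears, and this is the crux. Here I would argue class by class. Fix one of the $l$ residues $i$ represented among the leftmost $l$ boxes and let $M_i$ be the largest symbol occurring in class $i$ throughout $T_n$. Because symbols strictly increase up every true diagonal, $M_i$ is attained only at boxes of $D_{i,k}\cap T_n$ that are maximal in the ``below'' order, and these are precisely the class-$i$ boxes lying on $A_{n-1}\cup A_n$. I would then show, using the far-repeat lemma together with the transpose symmetry $(x,y)\mapsto(y,x)$ (which preserves $T_n$ and $A_n$, exchanges the two repeat directions, and sends class $i$ to class $-i$), that for each of the $l$ chosen classes some box of $A_n\cap D_{i,k}$ realizes a symbol introduced on $A_n$. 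These $l$ symbols lie in distinct classes, hence are distinct, which gives the claim.

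The main obstacle is exactly this last step: once $n\ge 2l$ a box of $A_n$ can become old through a repeat in either direction, so no single box is forced new, and one must rule out that an entire class $A_n\cap D_{i,k}$ is old. I expect to resolve this by an extremal argument: track, as $n$ grows by one, the largest symbol of each class and a maximal chain through it, and use the far-repeat lemma to show that a class losing \emph{all} of its new symbols on $A_n$ would force its maximal chain inside $T_{n-1}$ to overshoot the chain length available below $A_n$, contradicting the tableau condition. Verifying that this bookkeeping yields a fresh symbol in each of the $l$ leftmost classes uniformly in the parity of $k$—the odd case being delicate because it gives only the weaker shift bound—is where the real work lies.
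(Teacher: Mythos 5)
Your far-repeat lemma is essentially right (though the congruence should read $p-q\equiv 0\pmod* k$, not $p+q$: equal symbols share a diagonal, so $x-x'\equiv y-y'$), and it does force every old box of $A_n$ to satisfy $x\ge l+1$ or $x\le n-l$. But the conclusion you draw from it is arithmetically too strong. The boxes it forces to be new are those with $n-l<x\le l$, and there are only $2l-n$ of them --- already for $n=l+1$ with $k$ odd this is $l-1$, not $l$, and the count decays to $1$ as $n$ approaches $2l-1$. The extreme boxes really can be old: for $k$ odd, $(1,l+1)$ lies at lattice distance exactly $k$ from $(l,1)\in T_l$, and $(l+1,1)$ from $(1,l)$. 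So the positional argument alone does not prove the proposition anywhere in the range $l+1\le n\le 2l-1$; one needs an interleaving argument (e.g., both extremes of $A_{l+1}$ cannot simultaneously be old, since that would give $t(1,l)<t(1,l+1)=t(l,1)<t(l+1,1)=t(1,l)$), which is exactly what the paper's \cref{lem:2} supplies.

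The more serious problem is that your plan for $n\ge 2l$ aims at a false intermediate statement. It is not true that each residue class of the $l$ leftmost boxes of $A_n$ must contain a newly introduced symbol. The paper's own \cref{fig:strip-example} ($r=9$, $k=5$, $l=3$) is a counterexample: on $A_6$ the new symbols are $13,14,15$, occupying boxes with $x\in\{2,3,4\}$, i.e., classes $\{2,4,1\}\pmod 5$, whereas the leftmost box $(1,6)$ carries the old symbol $11$ and its class ($0\pmod 5$, shared with $(6,1)$, also an old $11$) contains no new symbol at all. In general, for a tableau that is non-repeating on a shifted strip, the classes hosting new symbols drift with $n$ and need not include the leftmost ones. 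Since you yourself flag the proof of this step as ``where the real work lies,'' the proposal has a genuine unfilled gap, and it cannot be filled in the form stated. The paper avoids localizing new symbols to prescribed classes: its \cref{lem:2} produces, for every \emph{pair} of adjacent classes $\{i,i+1\}$, a box of $A_n\cap(D_{i,k}\cup D_{i+1,k})$ whose symbol exceeds the maxima of both classes over $T_{n-1}$ (hence is new), and then a short covering argument --- any subset of $\Z/k\Z$ meeting every adjacent pair has at least $\ceil{k/2}=l$ elements --- yields the bound.
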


\noindent The following lemma establishes a restriction on symbols
which will go most of the way toward proving \cref{prop:1}, from which
the proof of \cref{thm:tropicalPBN} quickly follows.

\begin{lemma}
  \label{lem:2}
  Let $t$ be a staircase Prym tableau of type $(g,r,k)$, and fix
  $n\leq r$.  For any boxes $(x,y) \in D_{i,k}$ and $(x',y') \in D_{i+1,k}$
  that lie below $A_n$, there exists a box
  $\omega \in A_n\cap(D_{i,k}\cup D_{i+1,k})$ such that $t(\omega)$ is
  greater than both $t(x,y)$ and $t(x',y')$.
\end{lemma}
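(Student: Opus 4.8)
The plan is to decouple the two boxes. The crucial observation is that it suffices to produce \emph{two} boxes $\omega_1,\omega_2\in A_n\cap(D_{i,k}\cup D_{i+1,k})$, not necessarily equal, with $t(\omega_1)>t(x,y)$ and $t(\omega_2)>t(x',y')$: whichever of $\omega_1,\omega_2$ carries the larger symbol then exceeds \emph{both} $t(x,y)$ and $t(x',y')$ while still lying in the required set, so it serves as the desired $\omega$. This reduction is worth emphasising because the naive hope---to find a single box of $A_n$ lying weakly above both $(x,y)$ and $(x',y')$, and then invoke the tableau condition---is false in general: when the genuine diagonals $x-y=\mathrm{const}$ through the two boxes are far apart (yet still congruent to $i$ and $i+1$ modulo $k$), no box of $A_n$ dominates both geometrically.

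After this reduction, both $\omega_1$ and $\omega_2$ follow from a single symmetric sub-claim: if a box $(a,b)$ lies below $A_n$ and belongs to $D_{j,k}$ for some $j\in\{i,i+1\}$, then some box of $A_n\cap(D_{i,k}\cup D_{i+1,k})$ has symbol strictly greater than $t(a,b)$. (Taking $j=i$, $(a,b)=(x,y)$ yields $\omega_1$; taking $j=i+1$, $(a,b)=(x',y')$ yields $\omega_2$.) To prove the sub-claim I would climb the genuine diagonal through $(a,b)$, i.e.\ the boxes with $a-b$ fixed, along which the tableau condition forces symbols to strictly increase and which is entirely contained in $D_{j,k}$. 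There are two cases, according to the parity of $a+b$ relative to $n+1$. If $a+b\equiv n+1\pmod 2$, the climb meets $A_n$ at a box still lying in $D_{j,k}\subset D_{i,k}\cup D_{i+1,k}$, with a strictly larger symbol, and we are done. Otherwise the climb tops out on $A_{n-1}$ at a box $p\in D_{j,k}$ with $t(p)\ge t(a,b)$; I then take one further unit step onto $A_n$, stepping in the $x$-direction when $j=i$ (landing in $D_{i+1,k}$) and in the $y$-direction when $j=i+1$ (landing in $D_{i,k}$). In either case the resulting box lies in $A_n\cap(D_{i,k}\cup D_{i+1,k})$ and, being strictly above $p$, carries a symbol exceeding $t(p)\ge t(a,b)$.

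The step I expect to be most delicate is this parity case split together with the verification that the final unit step returns to $D_{i,k}\cup D_{i+1,k}$: this is exactly where the hypothesis that the two diagonals are \emph{consecutive} is indispensable, and it is the mechanism by which the odd-gonality subtleties enter (for even $k$, only one of the two diagonals meets $A_n$ at all, which streamlines the argument). The remaining obligations are bookkeeping: checking that every box invoked---the intermediate diagonal boxes, the top box $p\in A_{n-1}$, and the final box on $A_n$---lies in the domain $T_r$, which is immediate since $n\le r$ and all these boxes lie weakly below $A_n$; and checking that in the parity-matching case the climb moves at least one step, so that the inequality is strict. Granting the reduction of the first paragraph, none of these should present real difficulty.
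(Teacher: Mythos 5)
Your proposal is correct and is essentially the paper's own argument: the paper also climbs the genuine diagonal toward $A_n$, splitting on the parity of $\delta = n+1-x'-y'$ and, in the odd case, taking one extra unit step so as to land in the adjacent diagonal of $D_{i,k}\cup D_{i+1,k}$. The only difference is packaging --- the paper assumes WLOG that $t(x',y')$ is the larger of the two symbols and dominates that single box geometrically, whereas you run the same construction once for each box and take whichever output carries the larger symbol; the two reductions are logically equivalent.
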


\begin{proof}
  Let $a= t(x,y)$ and $b= t(x',y')$.  Since $a$ and $b$ lie in different diagonals
  modulo $k$, we know that $a \neq b$.  We will assume that $a<b$; the proof follows
  in the same way when the converse inequality holds.  We want to show that there is
  a box $\omega \coloneq (\omega_1,\omega_2)$ in $A_n\cap(D_{i,k}\cup D_{i+1,k})$
  that lies above $(x',y')$, since this would force $t(\omega) > b$.

  Indeed, define $\delta = n+1-x'-y'$.  We know that $x'+y' \leq n$
  because $(x',y')$ sits below $A_n$, so $\delta \geq 1$.  If $\delta$
  is even, then we define
  \begin{equation*}
    \label{eq:6}
    \omega \coloneq
    \left(x'+\frac{\delta}{2},y'+\frac{\delta}{2}\right).
  \end{equation*}
  Note that $\omega_1$ and $\omega_2$ are both positive integers,
  $\omega_1+\omega_2=n+1$, and
  $\omega_1-\omega_2 = x'-y' \equiv i+1 \pmod k$; moreover,
  $\omega$ sits above $(x',y')$, as desired.

  Suppose instead that $\delta$ is odd; then define
  \begin{equation*}
    \label{eq:7}
    \omega \coloneq
    \left( x'+\frac{\delta-1}{2} ,
           y'+\frac{\delta+1}{2} \right).
  \end{equation*}
  The desired properties once again hold (although in this case,
  $\omega \in D_{i,k}$).
\end{proof}

\begin{proof}[Proof of \cref{prop:1}]
  Given $n$ such that $l+1 \leq n \leq r$, we note first that
  $T_{n-1} \cap D_{i,k}$ is nonempty.  Indeed, we may write
  $i \in \set{-l+1,\ldots,l-1}$.  If $i \geq 0$, we have that
  $(1+i,1) \in T_{n-1} \cap D_{i,k}$; if $i < 0$, then
  $(1,1-i) \in T_{n-1} \cap D_{i,k}$.

  For each $i$, choose $\omega_i \in T_{n-1} \cap D_{i,k}$ such that
  $t(\omega_i)$ is maximal among $t(T_{n-1} \cap D_{i,k})$.  Then apply
  \cref{lem:2} to each pair $\set{\omega_i, \omega_{i+1}}$ to obtain a
  box $\eta_i \in A_n \cap (D_{i,k} \cup D_{i+1,k})$ such that
  $t(\eta_i) > t(\omega_i)$ and $t(\eta_i) > t(\omega_{i+1})$.  Hence,
  $t(\eta_i) > t(\omega)$ for every box
  $\omega \in T_{n-1} \cap (D_{i,k} \cup D_{i+1,k})$ and so is new in $A_n$.

  Therefore, for each pair $\set{i,i+1} \subset \Z/k\Z$, the set
  $A_n \cap (D_{i,k}\cup D_{i+1,k})$ contains at least one new symbol, which we shall
  denote by $b_i$.  Note that if $\set{i,i+1}$ and $\set{j,j+1}$ are disjoint, then
  their respective symbols $b_i$ and $b_j$ must lie in different diagonals modulo
  $k$, and so must be distinct.  Thus, the minimum number of new symbols in $A_n$
  coincides with the minimum number of elements we can choose from $\Z/k\Z$ such that
  we have at least one element in each pair $\set{i,i+1}$.  Suppose for the sake of
  contradiction that we could achieve this with $l-1$ elements.  Each is a member of
  two pairs, so we cover at most $2(l-1)<k$ pairs.  This is insufficient, as there
  are $k$ pairs, so the minimum size of such a set is $l$.
\end{proof}

\begin{proof}[Proof of \cref{thm:tropicalPBN}]
  We have already proved the case where $l > r$, so assume otherwise.
  From \cref{prop:1} and our earlier remarks, we get that $T_r$
  contains at least $\binom{l+1}{2} + l(r-l)$ distinct symbols.
  Hence, $\codim V^r(\Gamma,\varphi)$ is bounded below by this
  quantity.  Meanwhile, \cite[Corollary~6.2.2, Remark~6.2.3]{len2019skeletons}
   implies that it is also
  an upper bound, so we are done.
\end{proof}

\subsection{Relation to algebraic
  geometry}\label{sec:algebraic-dim-proof}
We are now in a position to prove \cref{cor:algebraicPBN}, restated
below.  
%The proof in this case is almost identical to the proof of
%\cite[Theorem~B]{len2019skeletons} ; we provide the details for
%completeness.

\algebraicPBN*

\begin{proof}
  Having established Theorem \ref{thm:tropicalPBN}, the proof of the Corollary is
  almost identical to the proof of \cite[Theorem~B]{len2019skeletons} and similar to
  analogous results from \cite{CDPR, JR, pflueger2017special}.  We illustrate the
  general idea, and leave the details to the reader. First, due to our assumption
  that the characteristic of the residue field is prime to both $2$ and $k$, we may
  lift the folded chain of loops $\varphi:\wti{\Gamma}\to\Gamma$ to a $k$-gonal
  unramified double cover $f \maps \wti{X} \to X$
  \cite[Lemma~7.0.1]{len2019skeletons}. By Baker's specialization lemma
  \cite[Corollary 2.11]{Baker_specialization}, the tropicalization of $V^r(X,f)$ (if
  non-empty) lies within $V^r(\Gamma,\varphi)$. By Gubler's Bieri--Groves Theorem
  \cite[Theorem 6.9]{Gubler_trop&nonArch}, dimensions are preserved under
  tropicalization, so the codimension of $V^r(X,f)$ inside the Prym variety is
  bounded from below by $n(r,k)$. A standard upper semicontinuity argument shows that
  $n(r,k)$ is, in fact, an upper bound on the codimension for a non-empty open  set in
  the $k$-gonal locus of $\cR_g$, as claimed.
\end{proof}
  
%\begin{proof}
%  Let $r\geq -1$ and let $k$ be any integer.  The proof will be
%  complete once we produce at least one unramified double cover
%  $f \maps \wti{X} \to X$ of genus $g$ in the $k$-gonal locus of
%  $\cR_g$ whose Prym--Brill--Noether locus has dimension bounded by
%  $g-1-n(r,k)$.
%  
%  Let $\varphi\maps\wti\Gamma\rightarrow \Gamma$ be a uniform
%  $k$-gonal folded chain of loops, and let
%  $f\maps \wti{X}\rightarrow X$ be a smoothing over a non-Archimedean
%  field $K$ \cite[Lemma~7.0.1]{len2019skeletons}.  By Theorem
%  \ref{thm:tropicalPBN}, the dimension of $V^r(\Gamma,\varphi)$ equals
%  $g-1-n(r,k)$, and from Baker's specialization inequality
%  \cite[Corollary 2.11]{Baker_specialization} we obtain
%  \begin{equation*}
%  \trop\big(V^r(X,f)\big) \subseteq V^r(\Gamma,\varphi).
%  \end{equation*} 
%  If $g-1<n(r,k)$, then the tropical Prym--Brill--Noether locus
%  $V^r(\Gamma,\varphi)$ is empty and so the algebraic
%  Prym--Brill--Noether locus $V^{r}(X,f)$ is empty as well.
%  
%  Otherwise, since both $\Gamma$ and $\wti\Gamma$ are trivalent and
%  without vertex-weights, both of their Jacobians and Prym varieties
%  are maximally degenerate. Therefore we may apply Gubler's
%  Bieri--Groves Theorem for maximally degenerate abelian varieties
%  \cite[Theorem 6.9]{Gubler_trop&nonArch} to conclude that
%  \begin{equation*}
%    \dim V^r(X,f)= \dim \trop\big(V^r(X,f)\big)\leq \dim V^r(\Gamma,\varphi\big)=g-1-n(r,k).\qedhere
%  \end{equation*}
%\end{proof}

Note that this bound is not necessarily strict. For instance, if $g\leq 2k-2$, then
the general curve is $k$-gonal. In this case, the codimension of the
Prym--Brill--Noether locus of a general curve is $\binom{r+1}{2}$
\cite{Welters_Prym}, which is stronger than the bound provided in Corollary
\ref{cor:algebraicPBN}.  However, we believe that our bound is strict when $g$ is
sufficiently high.

\begin{conjecture}\label{conj:dimension}
  Suppose that $g\gg n(r,k)$, and let $f\maps\wti{C}\to C$ be a
  % Used to be $g\geq n(r,k)+k$
  generic Prym curve. Then
  \begin{equation*}
    \label{eq:2}
    \dim V^r(C,f) = g-1-n(r,k).
  \end{equation*}
\end{conjecture}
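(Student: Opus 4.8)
The upper bound $\dim V^r(C,f)\le g-1-n(r,k)$ is furnished by \cref{cor:algebraicPBN}, so the entire substance of the conjecture is the reverse inequality $\dim V^r(C,f)\ge g-1-n(r,k)$. The plan is to obtain this lower bound by a lifting argument. For a generic Prym curve $f$ whose Berkovich skeleton is a uniform $k$-gonal folded chain of loops $\varphi\maps\wti\Gamma\to\Gamma$, I would show that a top-dimensional cell of the tropical locus $V^r(\Gamma,\varphi)$ lies in the image of the tropicalization map out of $V^r(C,f)$. Since tropicalization preserves dimension by Gubler's Bieri--Groves theorem, as already invoked in the proof of \cref{cor:algebraicPBN}, lifting a dense subset of one such cell would force $\dim V^r(C,f)\ge g-1-n(r,k)$; combined with the upper bound this yields the claimed equality.

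First I would fix a staircase Prym tableau $t$ of type $(g,r,k)$ of minimal codimension $n(r,k)$, for instance one of the explicit minimizers produced in the proof of \cref{thm:tropicalPBN}, and read off the combinatorial structure of its cell $P(t)\subset V^r(\Gamma,\varphi)$. The free loops are exactly those indices $a\in[g-1]$ for which neither $a$ nor $2g-a$ occurs in $t$, while on the remaining loops the chip positions are pinned down by the tableau; the open cell $\intr{P(t)}$ is therefore a torus of dimension $g-1-n(r,k)$ inside the tropical Prym variety. The problem then reduces to the following lifting statement: a generic point of $\intr{P(t)}$, regarded as a Prym divisor class on $\wti\Gamma$ of rank congruent to $r$ modulo $2$, is the tropicalization of an algebraic Prym divisor class on $\wti C$ of the same rank.

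The heart of the matter, and the step I expect to be the main obstacle, is this lifting. I would adapt the degeneration and lifting machinery of \cite{JR} for special divisors on chains of loops, building a lift loop by loop along the chain. Two features absent from the ordinary Brill--Noether setting make this delicate. First, one must maintain the Prym condition $f_*(D)=K_C$ throughout the degeneration so that the limit is a genuine Prym divisor and not merely a point of $W^r_{2g-2}(\wti C)$; concretely, this requires choosing the lifts on the paired loops $\ti\gamma_a$ and $\ti\gamma_{2g-a}$ compatibly with the covering involution, which is precisely the symmetry encoded by reflectivity in \cref{prop:reflective}. Second, one must preserve the parity of the rank, which tropically is governed by the placement of the symbol $g$ (the chip on $\ti\gamma_g$) and distinguishes the two components of the Prym variety; Welters' generic smoothness result \cite{Welters_Prym} suggests that a tangent-space computation should confirm that the lifted family has the expected dimension once these compatibilities are arranged.

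The hypothesis $g\gg n(r,k)$ should enter precisely in making the inductive lift feasible: a large supply of free loops provides the room needed to spread the covering involution compatibly and to avoid the small-genus degeneracies (such as $g\le 2k-2$, where the general curve is already $k$-gonal and Welters' bound $\binom{r+1}{2}$ is sharper) in which the tropical bound fails to be strict. Finally, if one can show that \emph{every} maximal cell lifts in this dimension-preserving fashion, then by \cref{thm:pure-dim} the pure-dimensionality would propagate to the algebraic locus, giving that $V^r(C,f)$ is pure-dimensional as a bonus. Establishing the realizability and dimension-preservation of this lifting for all maximal cells is exactly the gap that currently keeps the statement conjectural.
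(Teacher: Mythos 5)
The statement you were asked to prove is \cref{conj:dimension}, which the paper itself leaves open: no proof appears anywhere in the text, and the introduction explicitly flags the adaptation of the lifting techniques of \cite{JR} as the missing ingredient and as future work. Your proposal is therefore not wrong so much as it is not a proof. You correctly isolate the two halves: the upper bound $\dim V^r(C,f)\le g-1-n(r,k)$ is indeed \cref{cor:algebraicPBN} (via Baker specialization and Gubler's Bieri--Groves theorem), and the entire open content is the lower bound, which would follow if a top-dimensional cell $P(t)$ of $V^r(\Gamma,\varphi)$ could be shown to lie in the image of the tropicalization map on $V^r(C,f)$. Your outline of that lifting --- working loop by loop as in \cite{JR}, maintaining the involution-compatibility needed for $f_*(D)=K_C$ and the parity constraint governed by the placement of the symbol $g$ --- coincides with the route the authors themselves propose, and your closing sentence concedes that realizability of this lifting is exactly what is missing.

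The genuine gap, then, is the one you name yourself: nothing in the paper, in \cite{JR} as it stands, or in your proposal establishes that a generic point of a maximal tropical cell, i.e.\ a tropical Prym divisor class of prescribed rank and parity on the folded chain of loops, lifts to an algebraic Prym divisor class of the same rank. This is delicate even in the ordinary Brill--Noether setting --- not every tropical divisor class of rank $r$ on a chain of loops is the tropicalization of an algebraic class of rank $r$, which is precisely why \cite{JR} requires a genuine lifting argument rather than abstract specialization --- and the Prym constraint adds the two compatibilities you identify (the pushforward condition and the parity of the rank). Until that lifting is carried out, the equality remains a conjecture; your text should be read as a correct account of the intended strategy and of where it currently breaks, not as a proof of the statement.
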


\section{Tropological properties}\label{sec:trop-results}

As before, fix a folded chain of loops
$\varphi\maps\wti\Gamma\to\Gamma$ of genus $g$ and gonality $k$.  In
this section, we prove that the Prym--Brill--Noether locus
$V^r(\Gamma,\varphi)$ is pure-dimensional (\cref{thm:pure-dim}) and
connected in codimension 1 when the dimension is greater than zero
(\cref{thm:path}).  In \cref{sec:an-interl}, we develop the notions of
strips and non-repeating tableaux, which will also be necessary for
computing the Betti number of $V^r(\Gamma,\varphi)$ in
\cref{sec:counting}.\footnote{We use the term \define{Betti number} for
  the genus of the Prym Brill--Noether locus to distinguish it from
  the genus of our underlying graphs.}  The proof of
pure-dimensionality then comes as an easy corollary of \cref{prop:14}.
We tackle connectedness in \cref{sec:connect}.

\subsection{Strips and non-repeating tableaux}\label{sec:an-interl}

We focus our attention on Prym tableaux of minimal codimension.  Since
\cref{prop:reflective} implies that any such tableau is equivalent to
a reflective tableau and hence a staircase Prym tableau, it
suffices to consider this restricted type.  To simplify our
terminology, we shall say that a tableau is \define{minimal} if it is
staircase Prym of minimal codimension.

In the generic case (which, by a slight abuse of terminology, we take
to include both the case of generic edge lengths and the non-generic
case with $l\geq r$), minimal tableaux are relatively easy to
classify, since they are precisely the standard Young tableaux on
$T_r$.  The cases of even and odd torsion elude such a concise
description; nonetheless, as we will presently make precise, there are
subsets of $T_r$ that we call strips on which minimal tableaux are
determined up to equivalence.

\begin{definition}
  A subset $\mu \subset T_r$ is a \define{strip} if $T_l \subset \mu$
  and there exists a unique box in $\mu \cap A_n$ for each
  $n \in \set{l,l+1,\ldots,r}$ called the \define{$n$-th leftmost box}
  that satisfies the following properties:
  \begin{itemize}
  \item $(1,l)$ is the $l$-th leftmost box, 
  \item if $(x,y)$ is the $n$-th leftmost box, then the
    $(n+1)$-th leftmost box is $(x,y+1)$ or $(x+1,y)$, and
  \item if $(x,y)$ is the $n$-th leftmost box, then the boxes of
    $\mu \cap A_n$ are precisely those of the form $(x+i,y-i)$ for
    each $i \in \set{0,1,\ldots,l-1}$.
  \end{itemize}
  If $(x,y)$ is the $n$-th leftmost box, then we call $(x+l-1,y-l+1)$
  the \define{$n$-th rightmost box}.  We call $r$ and $l$ the
  \define{length} and \define{width} of $\mu$, respectively.
\end{definition}
  
Note that $\mu \cap A_n$ contains precisely $\min \set{n,\,l}$ boxes,
any two of which are separated by lattice distance at most $2l-2$.
This implies that any $k$-uniform tableau defined on $T_r$ must be
injective on each $\mu \cap A_n$.  Moreover, since we designate
$(1,l)$ as the $l$-th leftmost box and choose each subsequent leftmost
box out of two possibilities, it follows that $\mu$ may take on any of
$2^{r-l}$ distinct shapes.

$T_r\setminus\mu$ consists of two (possibly empty) contiguous
components, which we shall call the \define{left} and \define{right},
respectively.  In particular, the left component of $T_r\setminus\mu$
(if it exists) is the one that contains the box $(1,r)$.  We refer to
the strip whose right component is empty as the \define{horizontal
  strip} and denote it by $\mu_0$.

We now introduce a subclass of maps $T_r \to [g-1]$ that will play a
key role for the rest of the paper.  The even and odd cases differ; in
what follows, let $\epsilon$ be $0$ if $k$ is even and $1$ if $k$ is odd.

\begin{definition}\label{def:2}
  Given a strip $\mu$ and a map $t\maps T_r \to [g-1]$ such that
  $t\restrict{\mu}$ satisfies the tableau and displacement conditions,
  we say that $t$ is \define{non-repeating in $\mu$} if
  \begin{enumerate}
  \item\label{def:2a} $t(x,y) = t(x+l-\epsilon,y-l)$ for each $(x,y)$
    in the left component of $T_r\setminus\mu$,
  \item\label{def:2b} $t(x,y) = t(x-l,y+l-\epsilon)$ for each $(x,y)$
    in the right component of $T_r\setminus\mu$, and
  \item\label{def:2c} writing the $n$-th leftmost box as $(x,y)$, if
    $(x+1,y)$ is the $(n+1)$-leftmost box, then
    $t(x,y) < t(x+l-\epsilon,y-l+1)$; otherwise,
    $t(x+l-1,y-l+1) < t(x,y+1-\epsilon)$.
  \end{enumerate}
  We refer to \ref{def:2a} and \ref{def:2b} as the \define{left} and
  \define{right} \define{repeating conditions} respectively and to
  \ref{def:2c} as the \define{gluing condition}.
\end{definition}
%\steven{I know we talked about changing non-repeating to "strip
%  representative" or something like that, we should decide if we are
%  going to change it.}

See \cref{fig:strip-example} for an example.  It is straightforward to
check that these conditions are symmetrical with respect to
transposing the first and second coordinates.\footnote{To be precise,
  we transpose by switching the coordinates of each box, replacing $x$
  with $y$ and vice versa, and replacing ``left'' with ``right'' and
  vice versa.}  This fact will simplify the proofs of several
properties of non-repeating maps.

As a small convenience, we shall use the cardinal directions to refer
to boxes relative to a given box, with east and north corresponding to
increasing first and second coordinates, respectively.  So for
example, the north neighbor of $(x,y)$ is $(x,y+1)$, and the box three
steps west of $(x,y)$ is $(x-3,y)$.  We shall also call the north and
east neighbors the \define{upper neighbors} and the south and west
neighbors the \define{lower neighbors}.

\begin{figure}[htb]
  \centering
  \begin{ytableau}
    20      \\
    18      & 21      \\
    15      & 17      & *(c1)23 \\
    11      & 14      & *(c1)20 & *(c1)24 \\
    *(c1)10 & *(c1)13 & *(c1)18 & *(c1)21 & *(c1)22 \\
    *(c1)8  & *(c1)12 & *(c1)15 & *(c1)17 & *(c1)19 & 20 \\
    *(c1)6  & *(c1)9  & *(c1)11 & *(c1)14 & *(c1)16 & 18 & 21 \\
    *(c1)3  & *(c1)5  & *(c1)7  & 8 & 12  & 15 & 17 & 19 \\
    *(c1)1  & *(c1)2  & *(c1)4  & 6 & 9   & 11 & 14 & 16 & 18 \\
  \end{ytableau}
  \caption{A minimal tableau of size 9 and torsion 5 that is
    non-repeating on the strip depicted in blue.}
  \label{fig:strip-example}
\end{figure}

\begin{proposition}\label{prop:18}
  Given a strip $\mu$, if $t$ is non-repeating in $\mu$, then $t$ is a
  minimal tableau.
\end{proposition}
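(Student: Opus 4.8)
The plan is to verify two properties: that $t$ is a genuine staircase Prym tableau of type $(g,r,k)$, meaning a $k$-uniform displacement tableau on all of $T_r$, and that its codimension---the number of distinct symbols it uses---equals $n(r,k)$, whence it is minimal by \cref{thm:tropicalPBN}. Both parts rest on a retraction $\pi \maps T_r \to \mu$ recording, for each box $\omega$, the strip box whose symbol the repeating conditions force onto $\omega$. For $\omega$ in the left component I would use the left repeating condition $t(\omega) = t(\omega + (l-\epsilon,-l))$ and for $\omega$ in the right component the right condition $t(\omega) = t(\omega + (-l,\,l-\epsilon))$; iterating the relevant translation lands in $\mu$ after finitely many steps, and I set $\pi(\omega)$ to be the resulting strip box (with $\pi|_\mu = \id$). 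The crucial algebraic fact is that the translation vector $(l-\epsilon,-l)$ has coordinate difference $(l-\epsilon)-(-l) = 2l - \epsilon \equiv 0 \pmod{k}$, since $2l = k$ when $k$ is even and $2l = k+1$ when $k$ is odd (with $\epsilon = 0,1$ respectively). Hence $\pi$ preserves the diagonal modulo $k$.

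Two consequences are then immediate. For the displacement condition: if $t(\omega) = t(\omega')$ for boxes anywhere in $T_r$, then $t(\pi(\omega)) = t(\pi(\omega'))$ with $\pi(\omega),\pi(\omega') \in \mu$; as $t|_\mu$ satisfies the displacement condition by hypothesis, these two strip boxes share a diagonal modulo $k$, and therefore so do $\omega$ and $\omega'$ by the previous paragraph. For the symbol count: every symbol of $t$ already appears in $t(\mu)$, so $t$ uses at most $\abs{\mu} = \binom{l+1}{2} + l(r-l) = n(r,k)$ distinct symbols. Once the tableau condition is in hand, \cref{prop:1} (together with the count of symbols in $T_l$) supplies the reverse inequality $\codim(t) \geq n(r,k)$, forcing equality and hence minimality.

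It remains to establish the tableau condition, which is the heart of the matter. Since $T_r$ is convex---the axis-aligned rectangle spanned by any two of its boxes lies in $T_r$---it is enough to prove $t(\omega) < t(\omega')$ for boxes $\omega, \omega'$ adjacent by a single step to the north or east. I would sort these adjacencies into three types: (i) both boxes in $\mu$, where the inequality holds by hypothesis; (ii) both boxes in the same component of $T_r \setminus \mu$, where applying the repeating condition translates the adjacency by $(l-\epsilon,-l)$ (or its mirror) to one strictly closer to $\mu$, so that inducting on the number of iteration steps reduces to type (i) or (iii); and (iii) the seam, where one box lies in $\mu$ and its neighbor in an adjacent component. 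Type (iii) is precisely what the gluing condition \ref{def:2c} is designed for: unwinding the repeating condition on the component box rewrites the desired inequality as the one the gluing condition asserts at the appropriate leftmost or rightmost box. Throughout, the transposition symmetry recorded after \cref{def:2} lets me reduce the right component and vertical steps to the left component and horizontal steps.

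I expect the main obstacle to be the geometric bookkeeping underlying types (ii) and (iii), and especially the claim that the iteration defining $\pi$ terminates inside $\mu$ while remaining in $T_r$ and in the correct component at each intermediate step. This is transparent for even $k$, where the translation $(l,-l)$ fixes the anti-diagonal and merely shifts a box east by $l$ until it reaches the $l$ columns occupied by the strip. It is subtler for odd $k$, where $(l-1,-l)$ drops one anti-diagonal at each step and one must track how the leftmost box of $\mu$ migrates---north or east---between consecutive anti-diagonals, then match these two possibilities against the two clauses of the gluing condition. No essentially new idea beyond a careful case analysis should be needed.
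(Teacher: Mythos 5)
Your proposal is correct and follows essentially the same route as the paper's proof: displacement via the diagonal-preserving translation built into the repeating conditions (the paper phrases this as symbols being copied from boxes at lattice distance $k$), the tableau condition via a case analysis that transports unit adjacencies toward the strip with the gluing condition handling the seam and transposition handling symmetry, and minimality by counting the $n(r,k)$ boxes of $\mu$ against the lower bound coming from \cref{prop:1}. The only differences are organizational---the paper inducts on anti-diagonals rather than on the number of translation steps to reach $\mu$, and cites \cref{thm:tropicalPBN} directly for the lower bound where you cite \cref{prop:1}---and the boundary bookkeeping you flag (e.g.\ the off-strip box whose north neighbor is a rightmost box of $\mu$) does work out by chaining the gluing condition with the tableau condition on $\mu$, just as the paper's ``analogous arguments'' implicitly require.
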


\begin{proof}
  We first show that $t$ is staircase Prym.  The displacement
  condition holds in $\mu$ by definition and in $T_r \setminus \mu$ by
  the repeating conditions, since each symbol in $T_r \setminus \mu$
  is copied from a box that is distance $k$ away.

  Recall from the definition that $t\restrict\mu$ satisfies the
  tableau condition.  We need to check that $t$ satisfies the tableau
  condition.  We observe first that $t(1,1)$ is smaller than both
  $t(2,1)$ and $t(1,2)$.  This follows in the case that $k = 2$
  because both $(2,1)$ and $(1,2)$ contain the same symbol and one of
  the two is in $\mu$ along with $(1,1)$; in the case that $k > 2$,
  all three boxes are in $\mu$.  Suppose for the sake of induction
  that the tableau condition holds for all boxes in $T_{n-1}$ (and in
  particular, every box in $A_{n-1}$ contains a symbol smaller than
  the symbols of its upper neighbors).  Let $(x,y)$ be a box in $A_n$.
  If $n = r$, we are done; otherwise, it suffices to show that
  $t(x,y) < t(x+1,y)$ and $t(x,y) < t(x,y+1)$.

  By transposing the coordinates if necessary, we may assume that the
  $(n+1)$-th leftmost box is east of the $n$-th leftmost box.  Suppose
  first that $(x,y)$ is in $\mu$.  If it is not the $n$-th leftmost
  box, both of the desired inequalities follow from the fact that
  $(x+1,y)$ and $(x,y+1)$ are both also in $\mu$.  Otherwise, its east
  neighbor is in $\mu$ while its north neighbor is in the left
  component of $T_r \setminus \mu$.  We use the left repeating
  condition followed by the gluing condition to obtain the desired
  inequality:
  \begin{align*}
    t(x,y+1) = t(x+l-\epsilon,y-l+1) > t(x,y).
  \end{align*}

  Now suppose that $(x,y)$ is in the left component.  If $k$ is odd,
  then the symbols in $(x,y)$ and its upper neighbors are copied from
  the respective symbols in $(x+l-1,y-l)$ and \textit{its} upper
  neighbors.  Since $(x+l-1,y-l)$ is in $A_{n-1}$, it satisfies the
  tableau condition by the induction hypothesis.  If $k$ is even, we
  observe via repeated application of the left repeating condition
  that there is some box $(x',y')$ in $\mu \cap A_n$ such that the
  symbols in $(x,y)$ and its upper neighbors are copied from the
  respective symbols in $(x',y')$ and \textit{its} upper neighbors.
  We checked that the desired inequalities hold for every box in
  $\mu \cap A_n$, so they hold at $(x,y)$ as well.  Analogous
  arguments hold in both the odd and even cases when $(x,y)$ is in the
  right component.

  By induction, $t$ satisfies the tableau condition.  Thus, $t$ is
  staircase Prym.  It remains to show that $t$ has minimal
  codimension.  Indeed, observe that $\mu$ consists of
  $n(r,k) = \binom{l + 1}{2} + l(r-l)$ boxes, so $t\restrict{\mu}$
  contains at most that many distinct symbols.  Every symbol of $t$ in
  $T_r \setminus \mu$ is repeated from within $\mu$, so $t$ as a whole
  contains at most $n(r,k)$ symbols.  By \cref{thm:tropicalPBN}, $t$
  is minimal.
\end{proof}
  
\begin{corollary}\label{cor:1}
  If $t$ is non-repeating in $\mu$, then $t\restrict{\mu}$ is
  injective.
\end{corollary}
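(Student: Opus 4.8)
The plan is to obtain injectivity from a counting argument that piggybacks on the codimension bookkeeping already carried out in the proof of \cref{prop:18}. The key numerical fact is that the number of boxes in a strip equals the minimal codimension: the sub-triangle $T_l \subset \mu$ accounts for $\binom{l+1}{2}$ boxes, and each anti-diagonal $\mu \cap A_n$ with $l+1 \le n \le r$ contributes exactly $l$ more, so that $\abs{\mu} = \binom{l+1}{2} + l(r-l) = n(r,k)$.

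First I would invoke \cref{prop:18}: since $t$ is non-repeating in $\mu$, it is a minimal tableau, and hence by \cref{thm:tropicalPBN} the number of distinct symbols appearing in $t$---its codimension---is exactly $n(r,k)$. Next I would recall the observation made in the proof of \cref{prop:18} that every symbol occurring in $T_r \setminus \mu$ already occurs somewhere in $\mu$, a direct consequence of the repeating conditions \ref{def:2a} and \ref{def:2b}. It follows that the image of $t$ coincides with the image of $t\restrict{\mu}$, so $t\restrict{\mu}$ attains precisely $n(r,k)$ distinct values.

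Finally, $t\restrict{\mu}$ is a map out of the set $\mu$, which has $\abs{\mu} = n(r,k)$ elements, onto an image of size $n(r,k)$; by the pigeonhole principle such a map must be injective, which is the claim. In truth there is no real obstacle here: the substantive content is the codimension computation already completed via \cref{prop:18} and \cref{thm:tropicalPBN}, and the corollary simply reinterprets the equality $\abs{\mu} = n(r,k)$ between the count of boxes in $\mu$ and the count of distinct symbols of $t$ as the statement that no two boxes of $\mu$ can share a symbol.
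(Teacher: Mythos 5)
Your proof is correct and takes essentially the same route as the paper's: both arguments combine the lower bound of $n(r,k)$ on the number of distinct symbols (via \cref{thm:tropicalPBN}, with $t$ known to be a staircase Prym tableau by \cref{prop:18}), the observation that the repeating conditions force every symbol of $t$ to appear in $\mu$, and the box count $\abs{\mu} = n(r,k)$, followed by pigeonhole. The only cosmetic difference is that you phrase the symbol count as an exact equality via minimality, whereas the paper uses only the lower bound; the substance is identical.
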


\begin{proof}
  By applying \cref{thm:tropicalPBN}, we know that $t$ cannot contain
  fewer than $n(r,k)$ symbols.  Every symbol of $t$ appears in $\mu$,
  and $\mu$ consists of precisely $n(r,k)$ boxes.  Hence, each of
  those boxes must contain a distinct symbol.
\end{proof}

\begin{lemma}\label{lem:19}
  Fix a strip $\mu$ and $i \in \Z/k\Z$.  Then for any map $t$
  non-repeating in $\mu$ and any boxes
  $\omega \in \mu \cap A_m \cap D_{i,k}$ and
  $\omega' \in \mu \cap A_n \cap D_{i,k}$ with $m < n$, it must be the
  case that $t(\omega) < t(\omega')$.
\end{lemma}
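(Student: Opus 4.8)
The plan is to establish the monotonicity along a fixed diagonal modulo $k$ by induction on the anti-diagonal index $n$, reducing boxes lying in the same diagonal to comparisons between boxes that are nearest neighbors along that diagonal (i.e., separated by lattice distance exactly $k$, hence by one anti-diagonal step of size $\lceil k/2 \rceil = l$). The key observation is that any two boxes $\omega \in D_{i,k}$ and $\omega' \in D_{i,k}$ with $\omega$ on a lower anti-diagonal than $\omega'$ can be connected by a chain of boxes in $D_{i,k}$, each successive pair differing by a single diagonal step; so it suffices to prove the claim when $\omega$ and $\omega'$ are consecutive boxes of $\mu \cap D_{i,k}$, meaning $\omega' = \omega + (l, -l + \epsilon)$ or a comparable shift dictated by the strip's shape.

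\textbf{First I would} set up the reduction. Since $\omega, \omega' \in \mu \cap D_{i,k}$, and $\mu$ meets each anti-diagonal $A_m$ in at most $l$ boxes all lying in distinct diagonals modulo $k$ (by the remark that any two are separated by lattice distance at most $2l - 2 < k$), there is at most one box of $\mu$ in each $A_m \cap D_{i,k}$. Thus the boxes of $\mu \cap D_{i,k}$ form a sequence indexed by increasing anti-diagonal, and I reduce to comparing two consecutive terms $\omega \in A_m$ and $\omega' \in A_{m+l}$ (the anti-diagonal gap between consecutive same-diagonal boxes is $l$ when $k$ is even and $l$ or $l-1$ when $k$ is odd, tracked by $\epsilon$). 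The heart of the argument is then to show $t(\omega) < t(\omega')$ for such a consecutive pair.

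\textbf{The main obstacle} is handling the interaction between the strip $\mu$ and the components $T_r \setminus \mu$, because a diagonal $D_{i,k}$ need not stay inside $\mu$: as we move up, consecutive same-diagonal boxes of $\mu$ are linked by the repeating conditions, which copy symbols out into (and back from) the left or right components. My strategy is to translate the comparison into the repeating and gluing conditions of \cref{def:2}. Specifically, for consecutive same-diagonal boxes of $\mu$, either both lie on the strip boundary so that the gluing condition \ref{def:2c} directly gives the strict inequality, or the step passes through a component, in which case the repeating condition \ref{def:2a} or \ref{def:2b} identifies $t(\omega')$ with the symbol at a box that is comparable to $\omega$ via the already-established tableau condition (\cref{prop:18}) applied within the lower anti-diagonals. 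I would invoke the transposition symmetry noted after \cref{def:2} to reduce the two mirror-image cases (left component versus right component) to a single argument.

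\textbf{Finally I would} assemble the inductive step: assuming the inequality holds for all pairs of same-diagonal boxes spanning fewer anti-diagonals, I chain together the consecutive comparisons via transitivity to conclude $t(\omega) < t(\omega')$ for arbitrary $m < n$. The base case is immediate since within a single anti-diagonal $A_m$ there is at most one box of $\mu$ in $D_{i,k}$, so no comparison is needed. The crucial input throughout is that $t$ restricted to $\mu$ is injective (\cref{cor:1}) and satisfies the full tableau condition (\cref{prop:18}), which guarantees strictness at each step rather than mere inequality.
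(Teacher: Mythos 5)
Your high-level plan coincides with the paper's: induct on the anti-diagonal index, use the fact that $\mu \cap A_n \cap D_{i,k}$ contains at most one box to reduce, via transitivity, to comparing \emph{consecutive} boxes of $\mu \cap D_{i,k}$, and then settle each consecutive pair using the gluing and tableau conditions. The genuine gap is in the geometric core, which is where all the content of the lemma lies. You assert that consecutive boxes of $\mu \cap D_{i,k}$ are separated by $l$ anti-diagonals (even case) or by $l$ or $l-1$ (odd case), with $\omega' = \omega + (l, -l+\epsilon)$. This is false: consecutive boxes of $\mu \cap D_{i,k}$ are only $1$ or $2$ anti-diagonals apart. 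Concretely, in the odd case (the one the paper writes out), either $\omega$ is the $n$-th leftmost box and $\omega'$ is the $(n+1)$-th rightmost box, i.e.\ $\omega' = \omega + (l, -l+1)$ --- the only one of your shifts that is correct, handled by the gluing condition chained with the tableau condition --- or else $\omega' = \omega + (1,1)$, which is handled by the tableau condition on $t\restrict{\mu}$ alone; the mirror-image shift $(-(l-1), l)$ occurs when the strip turns north and is dealt with by the transposition symmetry. In the even case your proposed shift $(l,-l)$ does not change the anti-diagonal at all, so it can never relate two boxes of $\mu$ (two boxes of $\mu$ in a single anti-diagonal are at lattice distance at most $2l-2 < k$); the actual consecutive pairs there are $\omega + (1,1)$ and the wrap-around $\omega + (l+1, -(l-1))$.

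Because your case analysis is built on this incorrect picture, the key inequalities are never actually verified: the sentence ``either both lie on the strip boundary so that the gluing condition directly gives the strict inequality, or the step passes through a component\dots'' does not identify which boxes are being compared, and the dominant case $\omega' = \omega + (1,1)$ --- which needs no repeating condition at all, just the tableau condition --- is absent from your analysis. Two smaller remarks: invoking \cref{prop:18} and \cref{cor:1} is legitimate (both precede the lemma) but unnecessary, since the paper's proof uses only the tableau condition on $t\restrict{\mu}$, which is part of \cref{def:2}, together with the gluing condition; and your vacuous base case is weaker than the paper's (for $n \leq l$ the set $D_{i,k} \cap T_l$ is a single true diagonal, so the tableau condition settles those comparisons outright). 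To repair the argument you would need to redo the classification of consecutive same-diagonal boxes as above and then verify the resulting two or three cases --- which is precisely what the paper's proof does.
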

\begin{proof}
  The statement is true for $n \leq l$ by the tableau condition on
  $t\restrict{\mu}$ since $D_{i,k} \cap T_l$ is contained in a single
  diagonal.  We proceed by induction on $n$.  Suppose that the
  statement is true in $T_n$, and let $\omega'$ be a box in
  $\mu \cap A_{n+1} \cap D_{i,k}$.  Since $\mu \cap A_m \cap D_{i,k}$ contains
  at most one box, it suffices to show that $t(\omega) < t(\omega')$
  for $\omega \in \mu \cap A_m \cap D_{i,k}$ where $m \leq n$ is the
  maximum index such that $\mu \cap A_m \cap D_{i,k}$ is nonempty.

  Suppose that $k$ is odd.  Let $(x,y)$ be the $n$-th leftmost box,
  and assume without loss of generality that $(x+1,y)$ is the
  $(n+1)$-th leftmost box.  If $\omega'$ is the $(n+1)$-th rightmost
  box, $(x+l,y-l+1)$, then $m = n$ and $\omega = (x,y)$.  Then
  $t(\omega) < t(\omega')$ by the gluing condition.  If $\omega'$ is
  any other box $(x',y')$ in $\mu \cap A_{n+1} \cap D_{i,k}$, it is not
  hard to see that $\mu \cap A_n \cap D_{i,k}$ is empty and
  $\mu \cap A_{n-1} \cap D_{i,k}$ contains precisely one box; namely,
  $(x'-1,y'-1)$.  Then $\omega = (x'-1,y'-1)$, and the tableau
  condition on $t\restrict{\mu}$ implies that
  $t(\omega) < t(\omega')$.

  The case where $k$ is even follows in a similar way; we omit the
  details here.  
\end{proof}

Beginning with the following proposition, we start to see that the odd
and even cases are fundamentally different.  In particular, even
tableaux that are non-repeating on some strip are in fact
non-repeating on every strip; shortly, we will restrict our attention
to the horizontal strip whenever we talk about the even case.

\begin{proposition}\label{prop:2}
  Let $t$ and $s$ be tableaux that are non-repeating in $\mu$ and
  $\nu$ respectively.  For $k$ odd, $t$ and $s$ are equivalent if and only if
  $\mu=\nu$ and $t\restrict\mu = s\restrict\nu$.  For $k$ even, if
  $\mu = \nu$, then $t$ and $s$ are equivalent if and only if
  $t\restrict\mu = s\restrict\nu$.
\end{proposition}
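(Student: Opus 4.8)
The plan is to prove both directions, handling the reverse implication uniformly for both parities and then splitting the forward implication by parity, with the odd case requiring genuinely more work.

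For the reverse direction (valid for both parities), I would observe that a non-repeating map is completely determined by its restriction to its strip. Indeed, the left and right repeating conditions of \cref{def:2} express the value of $t$ at every box of $T_r \setminus \mu$ as the value at a box of $\mu$ lying a lattice distance $k$ away along the same diagonal, so knowing $t\restrict{\mu}$ recovers $t$ on all of $T_r$. Hence if $\mu = \nu$ and $t\restrict{\mu} = s\restrict{\nu}$, then in fact $t = s$, which is certainly equivalent to itself. This settles the ``if'' half of both the even and odd statements.

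For the forward direction, recall that two staircase Prym tableaux are equivalent exactly when $t(D_{i,k}) = s(D_{i,k})$ for every $i \in \Z/k\Z$, so assume this holds. The heart of the argument is a single induction on $n$ from $l$ to $r$ showing that the windows $\mu \cap A_n$ and $\nu \cap A_n$ coincide and that $t = s$ on them. For the base case I would show $t = s$ on $T_l$: within each diagonal $D_{i,k}$, \cref{lem:19} forces the symbols of $\mu \cap D_{i,k}$ to increase with the anti-diagonal index, so the values on $T_l \cap D_{i,k}$ (which lies strictly below every other box of that diagonal) are precisely the smallest $\abs{T_l \cap D_{i,k}}$ symbols of $t(D_{i,k})$, assigned in order; since $T_l \cap D_{i,k}$ is the same box set for both tableaux and $t(D_{i,k}) = s(D_{i,k})$, the two agree on $T_l$. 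For the inductive step, once the windows agree up to level $n$ and $t = s$ there, \cref{lem:19} together with $t(D_{i,k}) = s(D_{i,k})$ pins down the value at each box of the level-$(n+1)$ window as the next unused symbol of its diagonal; thus $t = s$ on that window as soon as the windows themselves are known to coincide.

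The only place where the parities diverge---and the main obstacle---is showing that the windows agree at level $n+1$, i.e., that $\mu$ and $\nu$ take the same (north or east) step. For even $k$ this is handed to us by the hypothesis $\mu = \nu$. For odd $k$ I would exploit the sharp form that the gluing condition \ref{def:2c} takes when $\epsilon = 1$: writing the common $n$-th leftmost box as $(x,y)$, an east step forces $t(x,y) < t(x+l-1, y-l+1)$, while a north step forces $t(x+l-1,y-l+1) < t(x,y)$---that is, for odd $k$ the gluing condition compares the leftmost and rightmost boxes of the \emph{same} anti-diagonal $A_n$, whereas for even $k$ ($\epsilon = 0$) it compares boxes on \emph{different} anti-diagonals, which is exactly why the step is not determined in the even case. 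Since these two boxes lie in the distinct diagonals $D_{x-y,k}$ and $D_{x-y-1,k}$ (so their symbols differ by the displacement condition) and $t = s$ on the level-$n$ window by induction, the sign of the comparison is identical for $t$ and $s$; hence both strips take the same step and the level-$(n+1)$ windows coincide. Running the induction to $n = r$ yields $\mu = \nu$ together with $t = s$ on the common strip, completing the odd case, while the even case follows from the same induction with the step agreement supplied by hypothesis. I expect the delicate point to be the bookkeeping around the odd-$k$ gluing condition---identifying the boxes $(x+l-\epsilon, y-l+1)$ and $(x, y+1-\epsilon)$ appearing in \ref{def:2c} with the rightmost and leftmost boxes of $A_n$---so this is the step I would write out most carefully.
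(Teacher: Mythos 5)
Your proof is correct and takes essentially the same route as the paper's: the reverse direction is immediate because the repeating conditions determine a non-repeating map from its restriction to the strip, the values on the strip are pinned down by the total ordering of \cref{lem:19} together with $t(D_{i,k}) = s(D_{i,k})$, and for odd $k$ the gluing condition (which, as you correctly note, compares the leftmost and rightmost boxes of the \emph{same} anti-diagonal when $\epsilon = 1$) forces the two strips to take the same step. The only difference is presentational: you run a direct induction on the anti-diagonal index, whereas the paper argues by contradiction at the minimal index where the strips differ.
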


\begin{proof}
  The converse of each statement trivially follows from \cref{def:2}.  For the
  forward direction, suppose that $t$ and $s$ are equivalent, and assume for the time
  being that $\mu = \nu$.  Because $t(D_{i,k}) = s(D_{i,k})$ for each $i$, the total
  ordering on the boxes of $\mu \cap D_{i,k}$ given by \cref{lem:19} forces
  $t\restrict{\mu} = s\restrict{\nu}$.  This proves the statement in the even case.
  For the odd case, suppose for the sake of contradiction that $\mu \neq \nu$, and
  let $n$ be the smallest index such that $\mu \cap A_{n+1} \neq \nu \cap A_{n+1}$
  (and note in particular that $n \geq l$).  Applying the argument above to the
  restricted domain $T_n$, we have that
  $t\restrict{\mu \cap T_n} = s\restrict{\nu \cap T_n}$.  Then the gluing condition
  on $\mu$ and $\nu$ forces the $(n+1)$-th leftmost box of each to be the same, so
  $\mu \cap A_{n+1} = \nu \cap A_{n+1}$, a contradiction.
\end{proof}

\begin{proposition}\label{prop:14}
  Given a staircase Prym tableau $t$, there exists a strip $\mu$ and a
  tableau $s$ that is non-repeating in $\mu$ such that $s$ dominates
  $t$. Moreover, in the even case, this strip may be chosen to be
  horizontal.
\end{proposition}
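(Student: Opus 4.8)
The plan is to build the strip $\mu$ together with the values of $s$ on $\mu$ by induction over the anti-diagonals $A_l, A_{l+1}, \ldots, A_r$, and then to define $s$ on $T_r \setminus \mu$ outright by the left and right repeating conditions of \cref{def:2}. Since $t$ is already staircase Prym, the assertion that $s$ dominates $t$ unwinds to the containment $s(D_{i,k}) \subseteq t(D_{i,k})$ for every $i \in \Z/k\Z$, so I would enforce this at the level of the construction: whenever I place a symbol in a box $\omega \in \mu \cap D_{i,k}$, I choose it only from the set $t(D_{i,k})$. Concretely, processing the boxes of $\mu$ in order of increasing anti-diagonal, I set $s(\omega)$ to be the smallest element of $t(D_{i,k})$ strictly exceeding $s$ at every box of $\mu$ below $\omega$ (and exceeding the glued value when the gluing condition \ref{def:2c} applies to $\omega$). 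Because each symbol of the displacement tableau $t$ occupies a single diagonal, drawing the symbol for a $D_{i,k}$-box from $t(D_{i,k})$ makes the displacement condition for $s\restrict\mu$ automatic, and the greedy rule makes the values strictly increase up the diagonals, which is exactly the monotonicity the repeating conditions require in order to extend $s$ consistently.

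The key to showing the greedy choice never runs dry is the auxiliary invariant $s(\omega) \leq t(\omega)$ for all $\omega \in \mu$. Granting it on $\mu \cap T_n$ and taking $\omega \in \mu \cap A_{n+1} \cap D_{i,k}$, every box $\omega''$ below $\omega$ lies in $T_n$ and satisfies $s(\omega'') \leq t(\omega'') < t(\omega)$, the last inequality being the tableau condition on $t$. Hence $t(\omega)$ is itself a member of $t(D_{i,k})$ exceeding all lower neighbours, so the set from which $s(\omega)$ is drawn is nonempty and the minimal choice again obeys $s(\omega) \leq t(\omega)$, closing the induction. This simultaneously yields the tableau condition for $s\restrict\mu$ and preserves the containment $s(D_{i,k}) \subseteq t(D_{i,k})$. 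Once $s$ has been defined on $\mu$ with the tableau, displacement, and gluing conditions in hand and extended to the complement by the repeating conditions, $s$ is by definition non-repeating in $\mu$; then \cref{prop:18} certifies that $s$ is minimal, \cref{cor:1} that $s\restrict\mu$ is injective, and the containment above shows $s$ dominates $t$.

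The subtle point—where the strip actually enters—is the gluing condition, because the box it constrains sits on $A_{n+1}$ but is \emph{not} below the box it is compared against, so the invariant $s \leq t$ no longer forces a suitable symbol to exist. Here I would exploit the freedom in choosing the $(n+1)$-th leftmost box: the two candidate directions $(x,y+1)$ and $(x+1,y)$ correspond to the two branches of \ref{def:2c}, and I would select the direction whose inequality can be realized by a symbol of $t$ in the correct diagonal. In the even case ($\epsilon = 0$) the repeating vector $(l,-l)$ lies along the anti-diagonals, so each anti-diagonal of $s$ is merely the periodic repetition of its bottom-$l$-row portion; this decouples the construction across anti-diagonals, the gluing inequalities collapse to comparisons supplied by the tableau condition on $t$, and one checks that always stepping east—that is, taking the horizontal strip $\mu_0$—succeeds, which gives the final clause of the statement. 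The genuine obstacle is the odd case: for $\epsilon = 1$ the repeating vector $(l-1,-l)$ descends one anti-diagonal, so the symbol forced into the left and right components interacts with the tableau condition on the \emph{next} anti-diagonal, coupling the choice of leftmost boxes to the greedy symbol assignment. The heart of the proof is therefore to show that a single consistent sequence of leftmost boxes can be chosen so that all the gluing inequalities hold while every assigned symbol stays inside the sets $t(D_{i,k})$; this is precisely the place where the odd gonality behavior departs from the even one.
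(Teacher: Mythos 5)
Your construction stalls exactly where you say it does---at the gluing condition---and that gap is fatal rather than technical, because the invariant $s \leq t$ is structurally incapable of producing it: the gluing condition \ref{def:2c} in \cref{def:2} compares boxes that are \emph{not} related by the ``below'' partial order, so the tableau condition on $t$ (the only input your invariant uses) says nothing about the required inequality. Worse, you have the parity difficulty backwards. In the odd case ($\epsilon=1$), the two branches of \ref{def:2c} compare the $n$-th leftmost box $(x,y)$ with the $n$-th \emph{rightmost} box $(x+l-1,y-l+1)$: both lie on $A_n$, both are already filled, and their symbols are distinct, since the two boxes occupy different diagonals modulo $k$ (they are at lattice distance $2l-2<k$) and the displacement condition holds. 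Hence exactly one of the two inequalities is satisfied, and one simply chooses the $(n+1)$-th leftmost box (east or north) to match it; no constraint propagates to the new anti-diagonal at all. This is precisely the paper's odd-case proof, which moreover dispenses with greedy minimization entirely: it takes $s$ to agree with $t$ on the strip outright (so dominance is trivial), and steps east when $t(x,y)<t(x+l-1,y-l+1)$, north otherwise. The ``heart of the proof'' that you leave open is a one-line observation once the construction is set up this way.

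The even case, which you wave through, is where your argument actually breaks. With the horizontal strip forced (as the statement requires), the gluing constraint at the new rightmost box $(x+l,1)\in A_{n+1}$ demands a symbol of $t(D_{i,k})$ exceeding $s(x,l)$, and $(x,l)$ is not below $(x+l,1)$; your invariant gives $s(x,l)\leq t(x,l)$, but nothing forces $t(x+l,1)>t(x,l)$. One can check that $\max\set{t(x,l+1),\,t(x+l,1)}$ is always a legal candidate (note that $(x,l+1)\in D_{i,k}$ because $k=2l$), but selecting it can violate $s\leq t$ at $(x+l,1)$, destroying your induction; so the claim that ``the gluing inequalities collapse to comparisons supplied by the tableau condition on $t$'' is not justified by anything you wrote. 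The paper sidesteps all of this with a different, global construction in the even case: define $s(\omega)=\max t(A_n\cap D_{i,k})$ for every $\omega\in A_n\cap D_{i,k}$. Dominance is then immediate, the tableau condition for $s$ is verified directly against max-attaining boxes of $t$, and the gluing condition follows from it via the left repeating identity $s(x,l)<s(x,l+1)=s(x+l,1)$. If you want to salvage your write-up, keep your formulation of dominance (which is correct) but replace the greedy rule by these two constructions: copy $t$ along an adaptively chosen strip in the odd case, and take diagonal-wise maxima in the even case.
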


\begin{proof}
  First suppose that $k$ is odd.  We begin by defining a tableau
  $s_l = t\restrict{T_l}$ and a strip $\mu_l = T_l$, and proceed by
  induction: suppose that we have defined a tableau $s_n$ on $T_n$
  that is non-repeating on a strip $\mu_n \subset T_n$, and suppose
  that $s_n\restrict{\mu_n} = t\restrict{\mu_n}$.  Let $(x,y)$ be the
  $n$-th leftmost box; then $(x+l-1,y-l+1)$ is the $n$-th rightmost
  box.  These two boxes, separated by distance less than $k$, cannot
  contain the same symbol in $t$.  Extend $\mu_n$ to a strip
  $\mu_{n+1} \subset T_{n+1}$ by defining the $(n+1)$-leftmost box to
  be $(x+1,y)$ if $t(x,y) < t(x+l-1,y-l+1)$, and $(x,y+1)$ otherwise.
  (This ensures that $s_{n+1}$, which we shall presently define,
  satisfies the gluing condition.)  Then extend $s_n$ to the map
  $s_{n+1}$ that agrees with $t$ on $\mu_{n+1} \cap A_{n+1}$ and is
  defined elsewhere according to the repeating conditions.  It is not
  hard to see that $s_{n+1}$ is non-repeating in $\mu_{n+1}$ and
  dominates $t\restrict{T_{n+1}}$.  Take $s = s_r$ for the desired
  result.
 
  We now consider the case that $k$ is even.  Given each
  $\omega \in A_n \cap D_{i,k}$, define
  $s(\omega) = \max t(A_n \cap D_{i,k})$.  It is clear that $s$ dominates
  $t$.  We claim that $s$ is non-repeating on the horizontal strip
  $\mu_0$.  Note first that the displacement and tableau conditions
  are satisfied everywhere.  The first follows immediately from the
  definition of $s$; to prove the second, it suffices to show that
  $s(x,y) < s(x+1,y)$ for each box $(x,y)$ since the other inequality,
  $s(x,y) < s(x,y+1)$, follows by transposing coordinates.  Indeed,
  suppose that $(x,y) \in A_n \cap D_{i,k}$.  Choose a box
  $(x',y') \in A_n \cap D_{i,k}$ that satisfies
  $t(x',y') = \max t(A_n \cap D_{i,k})$; then
  \begin{equation*}
    s(x,y) = t(x',y') < t(x'+1,y') \leq \max t(A_{n+1} \cap D_{i+1,k})
           = s(x+1,y).
  \end{equation*}
  The equalities follow from the definition of $s$, the strict
  inequality follows from the tableau condition on $t$, and the weak
  inequality follows because $(x'+1,y') \in A_{n+1} \cap D_{i+1,k}$.
  
  The left repeating condition holds because, given a box $(x,y)$ in
  the left component, $(x,y) \in A_n \cap D_{i,k}$ implies that
  $(x+l,y-l) \in A_n \cap D_{i,k}$, so $s(x,y) = s(x+l,y-l)$.  The right
  repeating condition is vacuously satisfied because the right
  component is empty.  The gluing condition follows from the tableau
  and left repeating conditions: $s(x,l) < s(x,l+1) = s(x+l,1)$.
\end{proof}

The fact that the Prym--Brill--Noether locus is pure-dimensional
readily follows from the results of this section.  

\begin{proposition}\label{thm:pure-dim}
	$V^r(\Gamma,\varphi)$ is pure-dimensional for any gonality $k$.
\end{proposition}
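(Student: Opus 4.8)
The plan is to deduce pure-dimensionality directly from \cref{prop:14} together with \cref{thm:tropicalPBN}. Recall that, by \cref{rem:2}, the locus $V^r(\Gamma,\varphi)$ is the union of the cells $P(t)$ as $t$ ranges over Prym tableaux with the correct $g$-fiber, and that the dimension of a maximal cell equals $g-1$ minus the codimension of the corresponding tableau. Thus to show pure-dimensionality, it suffices to show that every \emph{maximal} cell in this union has the same dimension, namely $g-1-n(r,k)$; equivalently, that every cell $P(t)$ is contained in some cell $P(s)$ where $s$ has minimal codimension $n(r,k)$.

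First I would reduce to staircase Prym tableaux. Given any cell of the locus, it arises from a Prym tableau $t$ with $t^{-1}(g)\subset D_{r,2}$; by \cref{prop:reflective} there is a reflective tableau dominating $t$, and as noted at the end of \cref{sec:reflective}, a reflective tableau is determined by its restriction to $T_r$, which is a staircase Prym tableau $t'$ with $P(t') = P(\hat{t'}) \supseteq P(t)$. So it suffices to show that every staircase Prym tableau is dominated by one of minimal codimension. This is exactly where \cref{prop:14} enters: it produces a strip $\mu$ and a tableau $s$ that is non-repeating in $\mu$ and dominates $t'$. By \cref{prop:18}, $s$ is a \emph{minimal} tableau, i.e.\ it has codimension exactly $n(r,k)$. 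Since dominance implies containment of cells, we conclude $P(t) \subseteq P(t') \subseteq P(s)$, and $P(s)$ has dimension $g-1-n(r,k)$.

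Putting these together, every cell of $V^r(\Gamma,\varphi)$ is contained in a maximal cell of dimension $g-1-n(r,k)$, and by \cref{thm:tropicalPBN} no cell can have larger dimension (since $n(r,k)$ is the minimal codimension). Hence the maximal cells all share the common dimension $g-1-n(r,k)$, which is precisely the assertion that the locus is pure-dimensional.

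The argument is essentially an assembly of the machinery already developed, so there is no single hard step; rather, the main point requiring care is the passage from \emph{tableau dominance} to \emph{cell containment}. One must invoke the observation from \cref{sec:reflective} that $t$ dominates $s$ implies $P(t)\supseteq P(s)$ (and that the $g$-fiber condition is preserved), and check that a maximal cell is genuinely one of top dimension rather than merely maximal with respect to inclusion among the $P(t)$. Since a minimal staircase Prym tableau yields a cell of the unique maximal dimension $g-1-n(r,k)$, and every cell is absorbed into such a cell, pure-dimensionality follows without further combinatorial input.
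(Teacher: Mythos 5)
Your proposal is correct and follows essentially the same route as the paper: apply the reflection algorithm of \cref{prop:reflective}, then \cref{prop:14} to obtain a non-repeating tableau, which is minimal by \cref{prop:18}, and conclude via dominance-implies-cell-containment together with the dimension count from \cref{thm:tropicalPBN}. The only detail the paper adds is to dispose of the generic case (generic edge lengths, or $l \geq r$) separately before invoking \cref{prop:14}, since there the reflected tableau restricted to $T_r$ is already injective and hence minimal---and strips of width $l$ need not even fit inside $T_r$ in that case.
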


\begin{proof}
  Given a Prym tableau $t$, we want to find a Prym tableau that
  dominates $t$ and has codimension $n(r,k)$.  Indeed, apply the
  reflection algorithm of \cref{prop:reflective} to $t$; the resulting
  tableau $u$ dominates $t$.  In the generic case, $u\restrict{T_r}$
  is injective, so we are done.  Otherwise, apply \cref{prop:14} to
  $u\restrict{T_r}$ to obtain a map $s$ defined on $T_r$ that
  dominates $t\restrict{T_r}$ by transitivity.  This map is a minimal
  tableau by \cref{prop:18}.  Extend $s$ uniquely to a reflective
  tableau.  This is the desired tableau.
\end{proof}

The ultimate motivation for defining non-repeating tableaux comes from
\cref{prop:2,prop:14}, which for fixed parameters $(g,r,k)$ yield the
following powerful correspondence:
\begin{equation}
\label{eq:5}
\set{\text{maximal cells of
    $V^r(\Gamma,\varphi)$}} \leftrightarrow \set{\text{non-repeating tableaux of type $(g,r,k)$}}.
\end{equation}
This result will be invaluable in the remainder of this section and in
\cref{sec:counting}.

For $k \leq 2r$, define a \define{strip tableau of type $(g,r,k)$} to
be an injective tableau $t$ defined on a strip $\mu$ of length $r$ and
width $l$ such that $t$ satisfies the gluing condition on $\mu$ and
takes values in $[g-1]$.\footnote{We require that $k \leq 2r$ because
  otherwise $\mu$, which contains $T_l$ by definition, is larger than
  $T_r$---this does not make sense!  In the case that $k > 2r$, the
  ``strip tableaux'' are really just the standard Young tableaux
  defined on $T_r$.  Observe that \cref{eq:8} still holds in this
  case.}  In the odd case, $\mu$ may take any of $2^{r-l}$ possible
shapes; in the even case, we require that $\mu = \mu_0$.  (We adopt
the convention that $\mu = \mu_0$ whenever we refer to non-repeating
tableaux in the even case.)

Clearly, any strip tableau extends uniquely to a non-repeating tableau
by applying the repeating conditions.  Conversely, any non-repeating
tableau determines a unique strip tableau.  Hence, the two classes of
tableaux are equivalent, and we may use either one depending on the
circumstances.  Keeping in mind the nuances that distinguish the odd
and even cases, \cref{eq:5} yields the correspondence
\begin{equation}
\label{eq:8}
\set{\text{maximal cells of
    $V^r(\Gamma,\varphi)$}} \leftrightarrow \set{\text{strip tableaux of type $(g,r,k)$}}
\end{equation}
as expected.  Just as we extended the definitions pertaining to Prym
tableaux to staircase Prym tableaux (see the discussion at the end of
\cref{sec:reflective}), so too may we extend these definitions even
further to strip tableaux.

% To this end, we need to establish several basic facts.  (Recall that
% $k = 2l$ in the even case.)
%
%\begin{lemma}\label{lem:4}
%  For $k$ even and $n \geq l$, $A_n \cap D_i$ is nonempty just if $n$
%  and $i$ differ in parity.
%\end{lemma}
%
%\begin{proof}
%  Note that the parity of $i$ is well-defined because $k$ is even.
%  Without loss of generality, assume that
%  $i \in \set{-l+1,-l+2,\ldots,l-1,l}$.  Suppose first that $n$ and
%  $i$ differ in parity.  If we let
%  \begin{align*}
%    x &= \frac{1}{2}(n+1+i) \\
%    y &= \frac{1}{2}(n+1-i),
%  \end{align*}
%  then it is clear that $x$ and $y$ are both positive integers.
%  Moreover, $x+y=n+1$ and $x-y=i$, so the box $(x,y)$ is in both
%  $A_n$ and $D_i$.
%
%  Now suppose that $A_n \cap D_i$ is nonempty.  Then there exists a
%  box $(x,y)$ for which $x+y=n+1$ and $x-y=i+mk$ for some $m \in \Z$.
%  Manipulating these equations, we get $n+i=2x-2ml-1$, so
%  $n+i \equiv 1 \pmod 2$, as desired.  (Note that we did not need the
%  assumption that $n \geq l$ for this direction.)
%\end{proof}
%
%\begin{corollary}\label{cor:2} 
%  For $k$ even and $n \geq l$, $A_n$ intersects precisely $l$
%  diagonals modulo $k$.
%\end{corollary}
%\yoav{Combine this corollary with the previous lemma by saying, ``in
% particular...".}
%
%\begin{proof}
%  This follows immediately from Lemma \ref{lem:4} and the fact that
%  there are exactly $l$ integers of each parity in the set
%  $\set{0,1,\ldots,k-1}$.
%\end{proof}

\subsection{Connectedness}\label{sec:connect}

In this section, we shall occupy ourselves with the following result,
which we prove in three seperate cases---generic, even, and odd.

\begin{proposition}\label{thm:path}
  If $\dim V^r(\Gamma,\varphi) \geq 1$, then $V^r(\Gamma,\varphi)$ is
  connected in codimension 1.
\end{proposition}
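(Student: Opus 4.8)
The plan is to use the correspondence \eqref{eq:8} between maximal cells of $V^r(\Gamma,\varphi)$ and strip tableaux, and to show that any two strip tableaux can be joined by a sequence of strip tableaux in which consecutive tableaux share a codimension-$1$ cell. Concretely, two maximal cells $P(t)$ and $P(t')$ are connected in codimension $1$ if there is a staircase Prym tableau $u$ of codimension $n(r,k)+1$ that is dominated by both (equivalently, whose cell $P(u)$ is a codimension-$1$ face of both $P(t)$ and $P(t')$). So the real task is to define a notion of ``elementary move'' between strip tableaux, show each move realizes a codimension-$1$ adjacency, and prove that the moves act transitively on the set of strip tableaux. I would handle the three cases (generic, even, odd) separately, as the proposition's statement advertises.

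For the generic case (and the non-generic case with $l \geq r$), strip tableaux are just standard Young tableaux on $T_r$, so the adjacency graph is the well-studied graph of standard Young tableaux under elementary transpositions: swapping the positions of two symbols $a$ and $a+1$ whenever they lie in non-adjacent boxes. Each such swap corresponds to merging the two tableaux into a single non-injective tableau of codimension one higher (identifying $a$ and $a+1$), which is exactly a shared codimension-$1$ cell. Connectivity of this graph is classical, so this case should be quick. For the even case, I would work entirely with the horizontal strip $\mu_0$ (as licensed by \cref{prop:2,prop:14}), so a strip tableau is an injective filling of the fixed shape $\mu_0$ satisfying the gluing condition; the same swap-of-adjacent-values argument, adapted to respect the gluing condition, should again give a connected adjacency graph.

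The odd case will be the main obstacle, and I expect to spend most of the effort there. Here the strip $\mu$ itself varies over $2^{r-l}$ possible shapes, so there are two genuinely different kinds of moves: (i) value swaps within a fixed strip shape, as above, and (ii) moves that change the shape of the strip, flipping some $n$-th leftmost box from $(x,y+1)$ to $(x+1,y)$ or vice versa. I would need to check that a shape-changing move can be realized so that the two resulting non-repeating tableaux share a codimension-$1$ cell—i.e. that there is a dominating staircase Prym tableau of codimension $n(r,k)+1$ witnessing the adjacency—and this is delicate because the gluing condition couples the strip shape to the values placed along it. The strategy would be to flip one box at a time, starting from the topmost anti-diagonal where the two strips disagree, and to verify using \cref{lem:19} and the gluing condition that the intermediate merged tableau is a legitimate staircase Prym tableau of the correct codimension. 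Finally I would argue transitivity: any strip can be transformed into the horizontal strip $\mu_0$ by a sequence of such flips, reducing odd-case connectivity to connectivity among tableaux on a single fixed strip, which is again the value-swap argument. Throughout, the hypothesis $\dim V^r(\Gamma,\varphi) \geq 1$ guarantees $r > l$ in the relevant regime, ensuring there is room for at least one nontrivial move and that the locus is not a single point.
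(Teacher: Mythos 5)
Your core ``elementary move'' does not realize codimension-$1$ adjacency, and this breaks all three cases of your argument. You propose transposing the positions of two symbols $a$ and $a+1$ inside a tableau and exhibiting the shared face by ``merging,'' i.e.\ identifying $a$ with $a+1$. Both halves of this fail. The merged filling has \emph{fewer} distinct symbols than $t$, and the codimension of a staircase Prym tableau is exactly its number of distinct symbols; so the merged object, were it legitimate, would have codimension $n(r,k)-1$ and its cell would strictly \emph{contain} $P(t)$ and $P(t')$, contradicting their maximality --- it is a larger cell, not a codimension-$1$ face. In fact it is not a legitimate tableau at all: in the generic case (and whenever the two boxes lie in different diagonals modulo $k$) a repeated symbol violates the displacement condition. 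Worse, two minimal tableaux that use the \emph{same} symbol set but place a symbol in different diagonals determine \emph{disjoint} cells: the chip on the corresponding loop is pinned at two different points, so $P(t)\cap P(t')=\emptyset$ (this is precisely the degenerate situation flagged in the footnote to \cref{rem:1}). So the graph generated by position transpositions has no edges among the cells in question, and the ``classical connectivity of standard Young tableaux under elementary transpositions'' is not applicable anyway: the relevant tableaux are injective maps into $[g-1]$ with $g-1>n(r,k)$, not bijections onto $[n(r,k)]$, so the symbol set itself varies from cell to cell.

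The moves that actually witness adjacency (\cref{def:3}, \cref{rem:1}) change which symbols are \emph{used}, not where they sit: one replaces every occurrence of a symbol $b$ by a symbol $a$ absent from the tableau (the paper's ``swapping $a$ in for $b$,'' iterated as ``cycling''), so the two cells fix chips on overlapping but distinct sets of loops and meet in a codimension-$1$ torus. This is also where the hypothesis $\dim V^r(\Gamma,\varphi)\geq 1$ genuinely enters: each minimal tableau uses $n(r,k)\leq g-2$ symbols, so an unused symbol in $[g-1]$ is always available to cycle with --- not merely ``room for a nontrivial move,'' as you put it. With the correct moves, connectivity within a fixed strip is proved by induction on a distance-to-standard-tableau statistic (\cref{lem:18}, \cref{lem:17}). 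Your odd-case skeleton --- flip the strip shape one box at a time until reaching the horizontal strip --- does match the paper's induction on a height function, but the flip step is delicate in a way your sketch cannot repair with transpositions: the naive ``copy symbols across'' move fails the tableau condition on $A_{r-1}$, and the paper must first cycle the two largest symbols $g-2$ and $g-1$ into specific boxes to build an intermediate codimension-$\bigl(n(r,k)+1\bigr)$ tableau that both neighboring maximal cells dominate.
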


As we explain in \cref{rem:1}, the following definition captures the
analogous notion of connectedness on the level of tableaux.

\begin{definition}\label{def:3}
  Suppose that $t$ and $s$ are $k$-uniform displacement tableaux
  having the same shape, the same number of distinct symbols, and
  image contained in $[g-1]$ for some $g$.  We say that $t$ and $s$
  are \define{adjacent} if there exist two symbols $a,b \in [g-1]$ and
  two indices $i,j \in \Z/k\Z$ such that
  \begin{itemize}
  \item $s(D_{i,k}) = t(D_{i,k}) \cup \set{a}$,
  \item $t(D_{j,k}) = s(D_{j,k}) \cup \set{b}$,
  \item $t(D_{h,k}) = s(D_{h,k})$ for all $h \in \Z/k\Z \setminus \set{i,j}$, and
  \item $a = b$ only if $i = j$.
  \end{itemize}
  We say that $t$ and $s$ are \define{connected} if there exists a
  sequence $(t_i)_{i=0}^n$ of tableaux such that $t_0 = t$, $t_n = s$,
  and $t_i$ is adjacent to $t_{i+1}$ for each
  $i \in \set{0,\ldots,n-1}$.
\end{definition}

\begin{remark}\label{rem:1}
  Using the tableau--divisor correspondence (\cref{sec:prym-tableaux}), it is
  straightforward to show that, in the particular case where $t$ and $s$ are minimal
  tableaux of type $(g,r,k)$, $t$ and $s$ are adjacent just in case either
  $P(t) \cap P(s)$ is a torus of codimension 1 %relative to $V^r(\Gamma,\varphi)$
  or $P(t) = P(s)$.  The former corresponds to the case $a \neq b$, the latter to the
  case $a = b$.\footnote{If we did not require that $i = j$ whenever $a = b$, then we
    could obtain $P(t) \cap P(s) = \emptyset$, which is undesirable.}  Likewise, the
  tableaux $t$ and $s$ are connected just if $P(t)$ and $P(s)$ are connected in
  codimension 1 by a sequence of cells of $V^r(\Gamma,\varphi)$.  It then follows
  from the correspondence in \cref{eq:5} that in order to prove \cref{thm:path}, it
  suffices to show that any two non-repeating tableaux (of the same type) are
  connected.
  %  To prove \cref{thm:path}, we need to show
%  that any two maximal cells of $V^r(\Gamma,\varphi)$ are connected in
%  codimension 1 by a sequence of cells; hence, by the correspondence
%  in \cref{eq:5}, it suffices to show that any two non-repeating
%  tableaux (of the same type) are connected.
\end{remark}

\cref{def:3} is unintuitive and unwieldy, but in fact, there are
relatively straightforward methods by which we may produce connected
tableaux.  As we shall see, to prove that any two non-repeating
tableaux are connected, we only need the following three operations
and combinations thereof.  Fix a $k$-uniform displacement tableau $t$
and a symbol $a \in [g-1]$ that does not appear in $t$.
\begin{enumerate}
\item Choose a box $\omega$ and define $s(\omega) = a$ and $s(\omega') = t(\omega')$
  for all $\omega' \neq \omega$.  We call this procedure \define{swapping $a$ into
    $\omega$}.  In general, $s$ will not satisfy the tableau condition unless $a$ is
  greater than the symbols in the lower neighbors of $\omega$ and smaller than those
  in the upper neighbors.  Moreover, $s$ has the same number of symbols as $t$ just
  in case the original symbol, $t(\omega)$, does not appear elsewhere in $t$.  Given
  that $s$ satisfies these conditions, $s$ is adjacent to $t$.
  
\item To ensure that $s$ does not have more symbols than $t$, we
  introduce the related notion of \define{swapping $a$ in for $b$},
  where $b$ is any symbol.  If $b$ does not appear in the tableau,
  define $s = t$ (i.e., do nothing).  Otherwise, for each box $\omega$
  containing $b$, define $s(\omega) = a$, and for all other boxes
  $\omega'$, define $s(\omega') = t(\omega')$.  The tableau condition
  must again be checked, this time at each box $\omega$.  Supposing
  that it holds, $s$ is adjacent to $t$.
  % \footnote{Swapping may be
  % defined in the natural way at the level of strip tableaux, which
  % are injective.  To check that the tableau condition is preserved
  % by swapping on a non-repeating tableau is just to check that the
  % tableau and gluing conditions are preserved on the corresponding
  % strip tableau.  We shall not need this fact for the present
  % discussion, although it may prove useful in subsequent
  % investigations.}
  
\item It is straightforward to check that we may always swap $a$ in
  for $a+1$ and $a$ in for $a-1$.  Hence, if there is a symbol $b > a$
  that we want to pull out of the tableau, we iterate the following
  procedure: after the $i$-th step, $a+i$ is not in the tableau, so
  swap $a+i$ in for $a+i+1$.  After $b-a$ steps, each symbol $a+i$ in
  $t$ for $i \in [b-a]$ has been decremented by 1.  In particular, $b$
  no longer appears in the tableau.  An analogous procedure may be
  used in the case that $b < a$; in either case, we call this
  \define{cycling out $b$ using $a$}.  The resulting tableau $s$ is
  connected to $t$.
\end{enumerate}

The following lemma outlines the first step in proving \cref{thm:path}.

\begin{lemma}\label{lem:18}
  Any two injective tableaux of the same shape containing at most
  $g-2$ symbols are connected.
\end{lemma}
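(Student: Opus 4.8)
The crux is that, since at most $g-2$ of the $g-1$ available symbols are used, there is always at least one free symbol to serve as scratch space. I would separate the data of an injective tableau into two pieces: its underlying \emph{standardization} (the linear extension of the below relation on the shape that is recorded by the relative order of the symbols) and the actual set of symbols used. The strategy is to trivialize the second piece, reduce to standard Young tableaux, and then connect their linear extensions. Concretely, fix two injective tableaux $t$ and $s$ of a common shape $\lambda$ with $N \leq g-2$ boxes.

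First I would show that cycling permits arbitrary relabeling. Using \emph{cycling out} repeatedly, together with the observation that a single free symbol can be slid to any position of $[g-1]$ just as the blank is moved in a one-hole sliding puzzle on a path, I would connect $t$ to the tableau $\bar t$ whose symbols are $1,2,\dots,N$ assigned in the same relative order, and likewise connect $s$ to $\bar s$. Because each cycling step is an order-preserving relabeling, it is a connectedness move that leaves the underlying linear extension unchanged; hence $\bar t$ and $\bar s$ are standard Young tableaux on $\lambda$ with entries $[N]$, and by transitivity of connectedness it suffices to connect $\bar t$ to $\bar s$.

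Next I would invoke the classical fact that any two linear extensions of a finite poset are joined by a sequence of adjacent transpositions, each exchanging two consecutive values $i$ and $i+1$ whose boxes are incomparable (so that the exchange again yields a standard tableau). It then remains only to realize one such transposition through connectedness moves. Writing $P = \bar t^{-1}(i)$ and $Q = \bar t^{-1}(i+1)$, the procedure is: cycle the block of symbols $\set{i+1,\dots,N}$ upward by one (using the spare value $N+1$) to free the value $i+1$; then apply \emph{swapping into a box} to place $i+1$ into $P$ and, once $i$ has thereby been freed, to place $i$ into $Q$; and finally cycle the displaced block back downward by one. Since $P$ and $Q$ are incomparable, both placements respect the tableau condition, and the net effect interchanges the contents of $P$ and $Q$ while preserving the symbol set $[N]$.

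The main obstacle is exactly this realization of a single adjacent transposition: one must verify that the tableau condition survives every micro-step of the cycle--swap--cycle procedure and that the number of distinct symbols is never altered, so that each step is a genuine adjacency rather than a dimension-changing move. This is the only place where the geometry of incomparable boxes enters, and it is precisely where the hypothesis of a spare symbol (that is, $N \leq g-2$) is used. Granting it, combining the relabeling reduction, the connectivity of linear extensions, and transitivity shows that $t$ and $s$ are connected, as desired.
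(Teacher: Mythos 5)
Your proof is correct, and it reaches the conclusion by a genuinely different route than the paper. The paper's proof is a single self-contained induction: it fixes a total order on the boxes of $\lambda$ (anti-diagonal by anti-diagonal, then left to right), defines a potential $R_{\lambda}(t)$ recording how long an initial segment of boxes already carries the ``correct'' symbols, and at each step cycles out the needed symbol using the smallest free symbol and then swaps it into the first incorrect box; every tableau is thereby driven to one canonical target, the standard increasing tableau, and connectedness follows by transitivity. You instead factor an injective tableau into its symbol set and its standardization: cycling, being an order-preserving relabeling (available because $N \leq g-2$ guarantees a spare symbol), lets you normalize both tableaux to the symbol set $[N]$; you then appeal to the classical fact that any two linear extensions of a finite poset are joined by transpositions of consecutive values occupying incomparable elements, and you realize each such transposition by the cycle--swap--swap--cycle maneuver (shift $\set{i+1,\dots,N}$ up using the spare value $N+1 \leq g-1$, place $i+1$ into $P$, place the freed $i$ into $Q$, shift back down). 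Your verification of the maneuver is sound: after the upward shift every symbol exceeding $i$ is at least $i+2$, so the two placements compare correctly against \emph{all} boxes above and below $P$ and $Q$ (not merely their neighbors), and injectivity ensures each swap preserves the number of distinct symbols, so every micro-step is a genuine adjacency. The trade-off is that the paper's argument is entirely self-contained, handling relabeling and reordering simultaneously in one induction, whereas yours is more modular---the relabeling and reordering phases are decoupled, and the reordering phase becomes a purely order-theoretic statement about linear extensions---but it rests on the linear-extension connectivity theorem, which you should cite or prove (e.g.\ by the usual bubbling induction on the first element of the target extension) for the argument to be complete.
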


\noindent To prove it, we introduce one more tool.  Given any shape
$\lambda \subset \N^2$, we establish a total order on its boxes as
follows: for any $(x,y) \in A_m$ and $(x',y') \in A_n$, say that
$(x,y) < (x',y')$ if $m < n$, or if both $m = n$ and
$x < x'$.\footnote{When $(x,y) < (x',y')$, we say that $(x,y)$ is
  ``smaller'' than $(x',y')$ in order to avoid confusion with the
  previous terminology ``below'' (see \cref{sec:prym-tableaux}); the
  latter implies the former, but the converse does not hold in
  general.}  Let $Q_{\lambda}(\omega)$ be the place of the box
$\omega$ in the order, i.e., the number of boxes $\omega' \in \lambda$
for which $\omega' \leq \omega$.  Then we define an $\N$-valued
function $R_{\lambda}$ on injective tableaux of shape $\lambda$ so
that
\begin{equation*}
  R_{\lambda}(t) \coloneq \card{\lambda} - \max \set{Q_{\lambda}(\omega)
    \given t(\omega') = Q_{\lambda}(\omega') \text{ for all }
    \omega' \leq \omega}.
\end{equation*}
There is a unique tableau $\bar{t}$ for which
$R_{\lambda}(t) = 0$; call it the \define{standard increasing
  tableau}.  For example, if $\lambda = T_4$, then $\bar{t}$ is the
final tableau in \cref{fig:4}, while the value of $R_{T_4}$ at the
first tableau in the sequence is 8, since the first and second boxes
contain the correct symbols but the third box does not---it contains a
5 rather than a 3.  Intuitively, $R_{\lambda}$ measures how far a
given tableau is from being identical to $\bar{t}$.

\begin{proof}[Proof of \cref{lem:18}]
  We will show by induction on the values of $R_{\lambda}$ that any
  injective tableau $t$ of shape $\lambda$ is connected to the
  standard increasing tableau $\bar{t}$.

  If $R_{\lambda}(t) = 0$, then the statement is trivially true since
  it must be the case that $t = \bar{t}$.  Otherwise, suppose that any
  tableau $s$ with $R_{\lambda}(s) < R_{\lambda}(t)$ is connected to
  $\bar{t}$.  Then it suffices to show that $t$ is connected to some
  such $s$.
  
  Let $\omega$ be the smallest box (relative to the total order) such
  that $t(\omega) \neq Q_{\lambda}(\omega)$.  Denote by $a$ the value
  $Q_{\lambda}(\omega)$ and by $S$ the set of boxes strictly smaller
  than $\omega$.  Our goal is to find a sequence of swapping
  operations that leaves the boxes of $S$ (which already contain the
  correct symbols) untouched while inserting the symbol $a$ into
  $\omega$.  
  %As it turns out, this is not too difficult.  Indeed,
  Choose the smallest symbol $b$ not in $t$.  Observe that $a \leq b \leq g - 1$; the
  lower bound holds because every symbol less than $a$ appears in (the correct box
  of) the tableau, while the upper bound follows from the assumption that $t$ uses at
  most $g - 2$ symbols.  First, cycle out $a$ using $b$ and call the resulting
  tableau $u$.  This has the effect of incrementing any symbol
  $\set{a,a+1,\ldots,b-1}$ appearing in $t$, so symbols in $S$ are unaffected.
  Furthermore, $u$ is injective and connected to $t$.  Second, swap $a$ into $\omega$
  and call the resulting tableau $s$.  This operation trivially leaves $S$
  unaffected.  Moreover, $s$ satisfies the tableau condition: the fact that
  $a < u(\omega)$ covers the upper neighbors, while the lower neighbors are both in
  $S$ (provided that they are in $\lambda$ at all) and so contain symbols that are
  smaller than $a$.  Since $u$ is injective, $u(\omega)$ appears only once, so $s$
  contains the same number of symbols as $u$.  It follows that $s$ is connected to
  $u$ and hence to $t$ as well by transitivity.  Since
  $R_{\lambda}(s) \leq R_{\lambda}(t) - 1$, this completes the proof.
\end{proof}

\begin{proof}[Proof of \cref{thm:path} in the generic case.]
  In the generic case, the non-repeating tableaux are precisely the
  injective tableaux on $T_r$.  Each such tableau contains
  $g-1-\dim V^r(\Gamma,\varphi)$ symbols; since
  $\dim V^r(\Gamma,\varphi) \geq 1$, we apply \cref{lem:18} to find
  that any two are connected.  By \cref{rem:1}, we are done.
\end{proof}

\begin{example}\label{ex:2}
  In \cref{fig:4}, we outline the proof of \cref{lem:18} in the case
  that $\lambda = T_4$ and $g \geq 12$.  We begin with the tableau on
  the top right and terminate at the standard increasing tableau, each
  step either a cycle or a swap.  The set of boxes $S$ at each step is
  colored blue.  The first step cycles out 3 using 11; notice that
  each symbol greater than or equal to 3 is incremented.  The second
  step swaps 3 into $(2,1)$, thereby removing 6 from the tableau.  We
  continue cycling and swapping as appropriate until every symbol is
  in the correct position according to the order.
  \begin{figure}[htb]
    \input{figures/path-connected-generic-ex.tex}
    \caption{Example application of the algorithm from the proof of
      \cref{lem:18}.}
  \label{fig:4}
  \end{figure}
\end{example}

\begin{lemma}\label{lem:17}
  If $\dim V^r(\Gamma,\varphi) \geq 1$, then for any gonality $k$, any
  two tableaux of type $(g,r,k)$ non-repeating in the horizontal strip
  $\mu_0$ are connected.
\end{lemma}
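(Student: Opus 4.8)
The plan is to connect every tableau non-repeating in $\mu_0$ to one fixed canonical tableau and then invoke the fact that connectedness (\cref{def:3}) is an equivalence relation. By \cref{cor:1} and \cref{def:2}, such a tableau is determined by its restriction to $\mu_0$, which is an injective tableau on $\mu_0$ obeying the gluing condition; I take as target the tableau $\bar t$ whose restriction to $\mu_0$ is the standard increasing tableau, filling the boxes of $\mu_0$ with $1,2,\dots,n(r,k)$ in the total order of \cref{lem:18}. One checks that $\bar t$ is genuinely non-repeating by verifying the gluing condition: writing the $n$-th leftmost box as $(n-l+1,l)$, its gluing partner $(n+1-\epsilon,1)$ comes strictly later in the total order (it lies on $A_{n+1}$ when $k$ is even, and east of $(n-l+1,l)$ on $A_n$ when $k$ is odd), so it carries a strictly larger symbol.

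To connect an arbitrary non-repeating $t$ to $\bar t$ I would run the algorithm of \cref{lem:18} on the free part $\mu_0$ instead of on all of $T_r$, using the following dictionary between operations on $\mu_0$ and adjacency moves on $T_r$. Since $t$ is non-repeating, the occurrences of a symbol $v = t(\omega)$ with $\omega \in \mu_0$ are exactly $\omega$ together with its copy-chain in the left component, all contained in the single diagonal $D_{i,k}$ of $\omega$; hence swapping a fresh symbol $a$ in for $v$ changes only $\omega$ and its copies, modifies only $t(D_{i,k})$, and is a single adjacency move, while cycling is a sequence of such moves. Both operations preserve the displacement condition, injectivity on $\mu_0$, and the repeating conditions. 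The hypothesis $\dim V^r(\Gamma,\varphi) \geq 1$ forces $n(r,k) \leq g-2$, so a spare symbol in $[g-1]$ is always available for cycling, exactly as in \cref{lem:18}. Processing the smallest box $\omega \in \mu_0$ with an incorrect symbol---cycling $a \coloneq Q_{\mu_0}(\omega)$ out using the smallest absent symbol and then swapping $a$ into $\omega$---decreases $R_{\mu_0}$ and terminates at $\bar t$.

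The one genuinely new point, and the main obstacle, is the tableau condition across the boundary between $\mu_0$ and the left component---precisely the gluing condition. When $\omega = (n-l+1,l)$ lies on the top row of $\mu_0$, its north neighbor lies in the left component and, by the left repeating condition, carries the symbol of the gluing partner $(n+1-\epsilon,1)$, which is processed after $\omega$; so I cannot assume it already holds its final value. The resolution is to maintain the invariant that the current tableau $u$ is itself non-repeating, hence already satisfies gluing, giving $u(\omega) < u(n+1-\epsilon,1)$; combined with $a < u(\omega)$ (which holds because $\omega$ is the smallest incorrect box and cycling only raises symbols), this yields $a < u(n+1-\epsilon,1)$, so the swap preserves gluing at $\omega$. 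Injectivity on $\mu_0$ shows the partner's symbol is untouched by the swap, and the left repeating condition reproduces the identical local configuration at every copy of $\omega$, so the tableau condition holds throughout $T_r$; the inequalities at lower neighbors and at the interior of $\mu_0$ are verified exactly as in \cref{lem:18}. Hence each step returns a non-repeating tableau, the invariant persists, and the algorithm connects $t$ to $\bar t$.
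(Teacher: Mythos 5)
Your proposal is correct and follows essentially the same route as the paper's proof: descend on $R_{\mu_0}$ toward the unique extension of the standard increasing tableau, using cycle-out (enabled by the spare symbol guaranteed by $\dim V^r(\Gamma,\varphi) \geq 1$) followed by swapping $a$ in \emph{for} the old symbol, while maintaining the invariant that each intermediate tableau is non-repeating in $\mu_0$ so that its gluing condition can be combined with $a < u(\omega)$. The only point worth spelling out explicitly is the second gluing case, where $\omega$ itself sits at a partner position $(x+l-\epsilon,1)$ (so the local configuration at its copies is \emph{not} identical to that at $\omega$, the copy acquiring a south neighbor $(x,l) \in \mu_0$): there the needed inequality $s(x,l) < a$ holds because $(x,l) < \omega$ in the total order, hence $(x,l) \in S$ and already carries its smaller final symbol --- exactly the in-$S$ argument you invoke, applied to the lower neighbor of a copy rather than of $\omega$.
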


The proof is very similar to that of \cref{lem:18}, so we highlight
only the major differences.  The key idea is that any tableau
non-repeating in $\mu_0$ is uniquely determined by its restriction to
$\mu_0$.  Hence, we may naturally define $R_{\mu_0}$ on such tableaux,
and we take as our base point $\bar{t}$ the unique tableau which
extends the standard increasing tableau on $\mu_0$.  We also make use
of the following essential observations.

\begin{remark}\label{rem:5}
  Suppose that $t$ is non-repeating in a strip $\mu$ and that the map $s$ is produced
  from $t$ by swapping $a$ in for $b$.  Then $s$ trivially satisfies the repeating
  conditions.  Meanwhile, by \cref{cor:1}, we know that $b$ appears exactly once in
  $t\restrict{\mu}$; provided that $s$ satisfies the tableau and gluing conditions at
  the box containing $b$, it is non-repeating in $\mu$.  Now, the gluing
  condition is satisfied whenever the repeating and tableau conditions are
  (everywhere) satisfied.  Hence, if $s$ is produced from $t$ by cycling out $b$
  using $a$, then $s$ is non-repeating in $\mu$.
\end{remark}

\begin{proof}[Proof of \cref{lem:17}.]
  As before, we induct on the values of $R_{\mu_0}$, noting that
  $\bar{t}$ is the unique tableau satisfying $R_{\mu_0}(\bar{t}) = 0$.
  Let $t$ be a tableau non-repeating in $\mu_0$ and $\omega \in \mu_0$
  the smallest box such that $t(\omega) \neq Q_{\mu_0}(\omega)$; much
  as before, denote $Q_{\mu_0}(\omega)$ by $a$, and let $S$ be the set of
  boxes in $\mu_0$ smaller than $\omega$.  First, cycle out $a$ using
  $b$, where $b \in \set{a,a+1,\ldots,g-1}$ does not appear in $t$ (and
  exists by the assumption that $\dim V^r(\Gamma,\varphi) \geq 1$).
  The resulting tableau $u$ is connected to $t$ and, by \cref{rem:5},
  is non-repeating in $\mu_0$.

  Here we diverge from the proof of \cref{lem:18}: instead of merely swapping $a$
  into the box $\omega$, we swap it in for the symbol $u(\omega)$; call the result
  $s$.  By \cref{rem:5}, to show that $s$ is non-repeating in $\mu_0$, it suffices to
  check the tableau and gluing conditions at $\omega$.  The former follows as in the
  proof of \cref{lem:18}, since $a < u(\omega)$ and the lower neighbors (if they
  exist) are in $S$.  The latter is trickier.  Because $\mu_0$ is horizontal, every
  leftmost box is of the form $(x,l)$ for some $x$.  The gluing condition on $u$ says
  that $u(x,l) < u(x+l-\epsilon,1)$.  If $\omega = (x,l)$, then the gluing condition
  is satisfied for $s$ since $a < u(x,l)$.  If $\omega = (x+l-\epsilon,1)$, then
  $(x,l) < \omega$. Therefore, $(x,l) \in S$ and contains a symbol smaller than $a$.
  In any other case, the gluing condition is trivially satisfied.

  Hence, $s$ is a tableau non-repeating in $\mu_0$ that is connected to $t$ and
  satisfies $R_{\mu_0}(s) \leq R_{\mu_0}(t) - 1$.
\end{proof}

\begin{proof}[Proof of \cref{thm:path} in the even case.]
  The non-repeating tableaux for even values of $k$ are each
  non-repeating in $\mu_0$ in particular (by the discussion following
  \cref{eq:5}).  By \cref{lem:17} and \cref{rem:1}, we are done.
\end{proof}

The odd case is more difficult than the even case because we cannot
just consider the horizontal strip: by the correspondence in
\cref{eq:5}, each of the $2^{r-l}$ strips determines a distinct set of
maximal cells of $V^r(\Gamma,\varphi)$.  Therefore, we introduce a
height function $H$ and---as we did for $R_{\lambda}$---show that any
tableau is connected to another with a lower $H$ value.  Given a
tableau $t$ of odd torsion which is non-repeating in $\mu$, define
$H(t)$ to be the second coordinate of the $r$-th leftmost box of
$\mu$.  Note that $H$ is well-defined by \cref{prop:2}.  Moreover,
$H(t) = l$ if and only if $\mu = \mu_0$.

We again take $\bar{t}$ to be unique non-repeating tableau that
extends the standard increasing tableau on $\mu_0$.  To simplify our
notation, we introduce the unit vectors $\hat x$ and $\hat y$ to
describe boxes relative to other boxes.  For example, if
$\omega = (x,y)$, then $\omega + \hat x = (x+1,y)$ and
$\omega - 2\hat y = (x,y-2)$.

\begin{proof}[Proof of \cref{thm:path} in the odd case.]
  We shall prove it by induction on the values of $H$.  First, suppose
  that $H(t) = l$.  Then $t$ is non-repeating in $\mu_0$, hence
  connected to $\bar{t}$ by \cref{lem:17}.  For the induction step,
  suppose that $t$ is non-repeating in a strip $\mu$ and that every
  tableau $s$ satisfying $H(s) < H(t)$ is connected to $\bar{t}$.
  Denote by $(x,y)$ the unique box in $\mu$ for which $y = H(t)$ and
  $(x-1,y) \nin \mu$.  Denote its anti-diagonal by $A_q$, and let
  $n = r - q$.  Define $\psi_i \coloneq (x+i,y)$ for each
  $i \in \set{0,1,\ldots,n}$.  Since $H(t) = y$, $\psi_i$ is the
  $(q + i)$-th leftmost box for all $i$, and in particular, $\psi_n$
  is the $r$-th leftmost box.

  Our goal is to show that $t$ is connected to a tableau $s$ that is non-repeating in
  $\nu$, where $\nu$ is the strip that agrees with $\mu$ up to $A_{q-1}$ but has
  every subsequent leftmost box east of the previous one.  (In particular, the $q$-th
  leftmost box of $\nu$ is $(x+1,y-1)$ rather than $(x,y)$.)  Then we will be done,
  since $H(s) = H(t) - 1$.

  \textit{Preliminary observations.}  Notice that each $\psi_i$ is in the left
  component of $T_r \setminus \nu$.  Hence, in order for $s$ to satisfy the left
  repeating condition, we need $s(\psi_i) = s(\omega_{i,0})$, where
  $\omega_{i,0} \coloneq (x+i+l-1,y-l)$.  Note that $\omega_{0,0}$ is the $(q-1)$-th
  rightmost box of $\mu$, while for each $i \geq 1$, $\omega_{i,0}$ is in the right
  component of $T_r \setminus \mu$.  Hence, for each $i \geq 1$, the right repeating
  condition yields $t(\omega_{i,0}) = t(\psi_i - \hat x - \hat y)$.  More generally,
  for each $j \geq 0$ we define $\omega_{i,j} = (x+i+l-1+jl,y-l-j(l-1))$.  Then for
  all $i$ and $j \geq 1$, $\omega_{i,j}$ is in the right component of both
  $T_r \setminus \mu$ and $T_r \setminus \nu$, so we have
  $t(\omega_{i,j}) = t(\omega_{i,0})$ and $s(\omega_{i,j}) = s(\omega_{i,0})$ by the
  right repeating condition.  See \cref{fig:pathoddproof} for a schematic diagram of
  our notations.

  \begin{figure}[htb]
    \newcommand*{\scs}{\scriptstyle}
\centering

\begin{tikzpicture}[]
  \node (n)
  {
    \begin{varwidth}{5cm}
      {
        \ytableausetup {boxsize=.625cm}
        \begin{ytableau}
          \scs{\psi_0} & \scs{\psi_1} & \scs{\psi_2} &\none[\dots]
            & \scs{\psi_{n-1}} & \scs{\psi_n} \\
          *(c1) & *(c1) & *(c1) & *(c1) & \none[\dots] & *(c1) & \\
          \none & \none[\originalddots] & \none[\originalddots]
            & \none[\originalddots] & \none[\originalddots]
            & \none[\originalddots] & \none[\originalddots]
            & \none[\originalddots] \\
          \none & \none &  &  &  &  & \none[\dots] & \scs & \\
          \none & \none &\none & \scs{\omega_{0,0}}
          & *(c2)\scs{\omega_{1,0}} & *(c2)\scs{\omega_{2,0}}
          & *(c2)\scs{\omega_{3,0}} & \none[\dots]
          & *(c2)\scs{\omega_{n,0}} & *(c2)
        \end{ytableau}
      }
    \end{varwidth}
  };
  \draw[line width=0.075cm,black] (-2.5,.9)--(-2.5,1.6);
  \draw[line width=0.075cm,black] (-2.5,1.6)--(1.4,1.6);
  \draw[line width=0.075cm,black] (1.4,1.6)--(1.4,1);
  \draw[line width=0.075cm,black] (1.4,1)--(2,1);
  \draw[line width=0.075cm,black] (2,1)--(2,.3);
  \draw[line width=0.075cm,black] (2,.3)--(2.6,.3);
  \draw[line width=0.075cm,black] (2.6,.3)--(2.6,-.3);
  \draw[line width=0.075cm,black] (2.6,-.3)--(3.3,-.3);
  \draw[line width=0.075cm,black] (3.3,-.3)--(3.3,-.95);
  \draw[line width=0.075cm,black] (3.3,-.95)--(0.05,-.95);
  \draw[line width=0.075cm,black] (0.05,-.95)--(0.05,-1.6);
\end{tikzpicture} \ \ \ \ \
\begin{tikzpicture}[]
  \node (n)
  {
    \begin{varwidth}{5cm}
      {
        \ytableausetup {boxsize=.625cm}
        \begin{ytableau}
          *(c2)\scs{\psi_0} & *(c2)\scs{\psi_1} & *(c2)\scs{\psi_2}
            &\none[\dots] & *(c2)\scs{\psi_{n-1}}
            & *(c2)\scs{\psi_n} \\
          &  &  &  & \none[\dots] &  & \\
          \none & \none[\originalddots] & \none[\originalddots] &
          \none[\originalddots] & \none[\originalddots] &
          \none[\originalddots] & \none[\originalddots]
          & \none[\originalddots] \\
          \none & \none &  &  &  &  & \none[\dots] & \scs & \\
          \none & \none &\none & *(c1)\scs{\omega_{0,0}} &
          *(c1)\scs{\omega_{1,0}} & *(c1)\scs{\omega_{2,0}} &
          *(c1)\scs{\omega_{3,0}} & \none[\dots] &
          *(c1)\scs{\omega_{n,0}} &
        \end{ytableau}
      }
    \end{varwidth}
  };
  \draw[line width=0.075cm,black] (-2.5,1)--(2,1);
  \draw[line width=0.075cm,black] (2,1)--(2,.3);
  \draw[line width=0.075cm,black] (2,.3)--(2.6,.3);
  \draw[line width=0.075cm,black] (2.6,.3)--(2.6,-.3);
  \draw[line width=0.075cm,black] (2.6,-.3)--(3.3,-.3);
  \draw[line width=0.075cm,black] (3.3,-.3)--(3.3,-1);
  \draw[line width=0.075cm,black] (3.3,-1)--(3.9,-1);
  \draw[line width=0.075cm,black] (3.9,-1)--(3.9,-1.6);
  \draw[line width=0.075cm,black] (3.9,-1.6)--(-.6,-1.6);
\end{tikzpicture} \ \ \ \ \
    \caption{The left and
      right tableaux are restrictions of $t$ and $s$ respectively to
      the same subset of $T_r$.  The rightmost anti-diagonal in the
      subset is $A_r$.  The bold lines outline the strips
      $\mu$ and $\nu$ respectively.  The labels are box names,
      not symbols; only the $\psi_i$ and $\omega_{i,0}$ are shown.
      Symbols in the red boxes outside of each strip are copied from
      the respective blue boxes within the strip.
    }
    \label{fig:pathoddproof}
  \end{figure}

  \textit{First attempt, using $t$.}  To go from $t$ to $s$, we could try to replace
  the symbol in $\omega_{i,j}$ with the symbol in $\psi_i$ for each $i$ and $j$, and
  leave all other symbols unchanged.  Unfortunately, the tableau condition would
  necessarily fail at $\omega_{n,0}$, which lies on $A_{r-1}$.  Indeed, we have
  $t(\psi_n) > t(\psi_n - \hat y) = t(\omega_{n,0}+ \hat x)$.  It is also possible
  that $t(\psi_n) > t(\omega_{n,0} + \hat y)$.  We shall modify $t$ so that these
  issues are avoided; in particular, we will put the two largest symbols, $g-2$ and
  $g-1$, into $\omega_{n,0} + \hat y$ and $\omega_{n,0} + \hat x$, respectively.

  \textit{Construction of $u$.}  Cycle out $g-2$ using any symbol that does not
  appear in $t$.  By \cref{rem:5}, the resulting tableau is still non-repeating in
  $\mu$.  The only symbol greater than $g-2$ is $g-1$, and if that appears in the
  tableau, then it appears on $A_r$ (since, being the largest symbol, it cannot be in
  a box with upper neighbors).  Hence, when we swap $g-2$ into
  $\omega_{n,0} + \hat y$, we know that the tableau condition is satisfied since it
  is necessarily larger than both of its lower neighbors' symbols.  Moreover, the
  symbol it replaces is unique in the tableau; indeed, any symbol in $A_r \cap \mu$
  in a tableau non-repeating in $\mu$ is unique.  Thus, the resulting tableau is
  still non-repeating in $\mu$.  Next, cycle out $g-1$ (using whatever symbol is
  free) and call the resulting tableau $v$.  Again, $v$ is non-repeating in $\mu$ and
  connected to $t$ by the previous operations.

  Now we swap $g-1$ into $\omega_{n,0} + \hat x$ to produce a $k$-uniform
  displacement tableau $u$.  Observe that $u$ is not non-repeating in $\mu$; indeed,
  $\omega_{n,0} + \hat x$ is in the right component of $T_r \setminus \mu$, and the
  box it should be repeated from, $\psi_n - \hat y$, contains a symbol other than
  $g-1$ (which we had cycled out).  It is important to note that, as a result, the
  codimension of $P(u)$ relative to $V^r(\Gamma,\varphi)$ is 1.  Thus, we need to
  show that $u$, which is dominated by $v$, is also dominated by some tableau $s$
  non-repeating in $\nu$; this will imply that $t$ and $s$ are adjacent.

  \textit{Second attempt, using $u$.}  We now construct $s$ from $u$ in the same way
  that we attempted to construct $s$ from $t$.  Precisely, for each $i$ and $j$, we
  define $s(\omega_{i,j}) = u(\psi_i)$ and let $s$ coincide with $u$ everywhere else.
  It is not hard to see that $s\restrict{\nu}$ is injective and so trivially
  satisfies the displacement condition.  Moreover, the left and right repeating
  conditions are satisfied since $s(\psi_i) = s(\omega_{i,0})$ and
  $s(\omega_{i,j}) = s(\omega_{i,j-1})$ for each $j \geq 1$.  Moreover, the leftmost
  boxes of $\nu$ that are not leftmost boxes of $\mu$ are of the form
  $\psi_i - \hat{y}$ for each $i$.  The gluing condition on $s$ then follows by the
  tableau condition on $u$, since the $\omega_{i,0}$ are the corresponding rightmost
  boxes of $\nu$; explicitly,
  \[s(\psi_i - \hat{y}) = u(\psi_i - \hat{y}) < u(\psi_i) = s(\psi_i)
    = s(\omega_{i,0}).\]

  It remains to show that $s\restrict{\nu}$ satisfies the tableau condition.  This
  amounts to checking it at each $\omega_{i,0}$.  The south neighbor of
  $\omega_{i,0}$ is not in $\nu$, so we may safely ignore it.  Moreover, since the
  entire block of symbols in the set $\set{\omega_{i,0}}$ is copied from
  $\set{\psi_i}$, we know that the condition is satisfied between each pair
  $\set{\omega_{i,0}, \omega_{i+1,0}}$.  For $i \neq n$, we need to check that the
  north neighboring symbol is larger:
  \[s(\omega_{i,0}) = u(\psi_i) < u(\psi_i + \hat y) = u(\omega_{i,0} + \hat y) =
    s(\omega_{i,0} + \hat y).\] For $i = 0$, the west must be smaller:
  \[s(\omega_{0,0}) = u(\psi_0) > u(\psi_0 - \hat y) = u(\omega_{1,0}) >
    u(\omega_{0,0} - \hat x) = s(\omega_{0,0} - \hat x).\] Finally, for $i = n$, the
  upper neighbors' symbols are $g-1$ and $g-2$, the two largest symbols in the
  tableau.  Hence, $s$ is non-repeating in $\nu$.  Moreover, it clearly dominates
  $u$, which completes the proof.
\end{proof}

The next example demonstrates the algorithm that lowers the height of
the strip by one.

\begin{example}\label{ex:3}
  Consider the first tableau $t$ in \cref{fig:path-connected-odd-ex},
  where $g=23$, $r=8$, and $k=5$. We color the strip $\mu$ blue. In
  our example, we note that $H(\mu)=5$, so $\psi_0=(3,5)$.  The first
  step of the algorithm is to cycle out $g-2=21$ using $22$ and then
  swap it into $\omega_{n,0}+\hat{y}=(6,3)$.  (These two operations do
  not change the tableau, since $21$ was already in the correct box.)
  The next step is to cycle out $g-1=22$ (this does nothing) and swap
  it into $\omega_{n,0}+\hat{x}=(7,2)$, thereby producing the second
  tableau $u$.  Since $u$ is not minimal, we do not color any strip.
  For the last step, we copy the symbols from the boxes $\psi_0$ and
  $\psi_1$ (17 and 19 respectively) into the boxes $\omega_{0,0}$ and
  $\omega_{1,0}$. This produces the third tableau $s$.  We color the
  strip $\nu$; observe that the height has decreased by one. Another
  iteration of this process would yield a tableau that is
  non-repeating in the horizontal strip.
  \begin{figure}[htb]
    \input{figures/path-connected-odd-ex.tex}
    \caption{}
    \label{fig:path-connected-odd-ex}
  \end{figure}
\end{example}

\section{Enumerative properties}\label{sec:counting}

Now that we have established a few general facts about
Prym--Brill--Noether loci, we begin to look at some of their
enumerative properties.
We start by counting the number of divisors in 0-dimensional loci
before examining the 1-dimensional case.
% As a future goal, we aim to compute the homology groups and Euler
% characteristic for loci of any dimension.

\subsection{Cardinality of finite Prym--Brill--Noether loci}\label{sec:finite}

In this section, we fix the parameters $g$, $r$, and $k$ so that
$g-1=n(r,k)$.  By \cref{thm:tropicalPBN}, this condition ensures that
$\dim V^r(\Gamma,\varphi) = 0$, so every point of $V^r(\Gamma,\varphi)$ is in itself a maximal cell.

Recall that, by \cref{eq:8}, the maximal cells of the
Prym--Brill--Noether locus 
are in bijection with strip tableaux of the corresponding type.  It is
clear that there are finitely many strips and finitely many ways to
fill each one, so the cardinality of $V^r(\Gamma,\varphi)$, which we
denote by $C(r,k)$, is finite.  This number has been computed
\cite[Corollary~6.1.5]{len2019skeletons} for generic edge lengths
(where $k = 0$ by convention) and equivalently for $k>2r-2$ using the
hook-length formula.  We now compute it in the case that $k$ is even
and at most $2r-2$.

% In this section, we fix $g-1=n(r,k)$. The Prym--Brill--Noether locus
% is then finite, and its points correspond to staircase Prym tableaux
% where every symbol in $[g-1]$ is used. Denote by $C(r,k)$ the number
% of divisor classes in $V^r(\Gamma,\varphi)$.  This number has been
% computed \cite[Corollary~6.1.5]{len2019skeletons} for generic edge
% length or $k>2r-2$ using the hook-length formula.

% For even gonality $k\leq 2r-2$, we now use the observations from
% \cref{sec:trop-results} to obtain a bijection between tableaux and
% certain lattice paths, giving rise to the following formula.

% \steven{Question about statement should $\alpha_i$'s be nonnegative
% or stictly positive? If so I don't think it should be $\mathbb{Z}$ }
% \derek{So the $\alpha_i$ are $l$-tuples, each $a_i$ is an integer,
% and they sum to 0. Ex. $(1,-1,0)$ and $(-1,1,0)$ would be
% different.}

\begin{proposition}\label{prop:card}
  For even $k\leq2r-2$, the number of divisor classes in the 0-dimensional
  locus is
  \begin{equation} \label{eq:3} C(r,k) = n!\sum
    \begin{vmatrix}
      \frac{1}{(r+\alpha_1 k)!} & \frac{1}{(r-2+\alpha_2 k)!} & \cdots
      & \frac{1}{(r-k+2+\alpha_l k)!}\\
      \frac{1}{(r+1+\alpha_1 k)!} & \frac{1}{(r-1+\alpha_2 k)!} &
      \cdots
      & \frac{1}{(r-k+3+\alpha_l k)!}\\
      \vdots & \vdots & \ddots & \vdots\\
      \frac{1}{(r+l-1+\alpha_1 k)!} & \frac{1}{(r+l-3+\alpha_2 k)!}
      & \cdots & \frac{1}{(r-l+1+\alpha_l k)!}\\
    \end{vmatrix}
    \end{equation}
    where $n=n(r,k)=g-1$
    % is the codimension
    and the sum is taken over all $l$-tuples $(\alpha_i)_{i=1}^l$ for
    which $\alpha_i \in \Z$ and $\sum_{i=1}^l\alpha_i=0$.
\end{proposition}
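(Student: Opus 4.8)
The plan is to feed the correspondence \eqref{eq:8} into a cylindric version of the Lindström--Gessel--Viennot (LGV) lemma. Since $k\le 2r-2$ we have $l\le r-1$, so the horizontal strip $\mu_0$---the only relevant strip in the even case---is the set of boxes $(x,y)\in T_r$ with $y\le l$, whose rows have lengths $r,r-1,\dots,r-l+1$ and therefore contain exactly $n=n(r,k)$ boxes. By \cref{cor:1}, a tableau non-repeating in $\mu_0$ restricts to an injective map $\mu_0\to[g-1]=[n]$, hence to a \emph{bijection}: a standard filling of the shape $\mu_0$. Reading off \cref{def:2} with $\epsilon=0$ and $\mu=\mu_0$, the repeating conditions merely propagate the filling outside $\mu_0$, so the only genuine constraints are the tableau condition on $\mu_0$ together with the gluing condition, which here reads $t(x,l)<t(x+l,1)$ for $1\le x\le r-l$. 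Thus, by \eqref{eq:8}, $C(r,k)$ is the number of standard fillings of $\mu_0$ satisfying this gluing condition.

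First I would encode a filling as a system of lattice paths. Reading the filling in increasing order of symbols, let $h_y(t)$ be the number of boxes of row $y$ filled after symbol $t$ is placed and set $\beta_y(t)=h_y(t)-y$. The tableau condition is equivalent to the filled region being an order ideal of $\mu_0$ at every time, i.e.\ to $\beta_1(t)>\dots>\beta_l(t)$ for all $t$; a short computation shows that the gluing condition is equivalent to $h_1(t)-h_l(t)\le l$, i.e.\ $\beta_1(t)-\beta_l(t)\le k-1$, for all $t$. Hence a filling is the same datum as $l$ particles occupying the positions $\beta_1>\dots>\beta_l$, each making a single unit right-jump at each of the $n$ time steps, and confined at all times to a window of width $<k$. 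Because $k=2l$, this window condition with distinct positions is exactly the statement that the paths are non-intersecting on the cylinder $\Z/k\Z$; particle $y$ runs from residue $-y$ to residue $r+1-2y$ modulo $k$.

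Next I would count these cyclic non-intersecting families by the cylindric LGV lemma. The exponential generating function marking the time steps of a single cylindric path from the start of particle $p$ to the end of particle $q$ is $\sum_{\alpha_q\in\Z} x^{\,r+p-2q+1+\alpha_q k}/(r+p-2q+1+\alpha_q k)!$, the sum over windings $\alpha_q$ (with the convention $1/m!=0$ for $m<0$). The LGV lemma then gives the non-intersecting count as $n!\,[x^n]\det\big(\mathrm{EGF}_{p,q}\big)_{p,q=1}^l$. Expanding the determinant over permutations $\sigma\in S_l$, the total exponent contributed by a term is $\sum_q(r+\sigma(q)-2q+1)+k\sum_q\alpha_q=n+k\sum_q\alpha_q$, independent of $\sigma$; extracting $[x^n]$ therefore forces $\sum_q\alpha_q=0$, and what survives is precisely $n!\sum_{\sum\alpha_i=0}\det\big(1/(r+p-2q+1+\alpha_q k)!\big)$, which is \eqref{eq:3}.

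The reduction in the first two paragraphs is essentially bookkeeping; the heart of the matter---and the step I expect to be the main obstacle---is the cylindric LGV lemma itself. One must verify with care that the gluing condition corresponds \emph{exactly} to cyclic non-intersection at circumference $k$ (no particle lapping another, no spurious wrapping), and then establish the signed, winding-summed determinant identity, making sure the permutation signs combine so that the signed sum of determinants genuinely enumerates the non-intersecting configurations rather than merely those that are non-crossing in the bulk. The automatic appearance of the constraint $\sum_i\alpha_i=0$ from the $[x^n]$ extraction is a useful consistency check that the circumference and endpoints have been identified correctly.
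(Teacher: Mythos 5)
Your proposal is correct and follows essentially the same route as the paper: your particle encoding $\beta_y = h_y - y$ is an affine shift of the paper's bijection onto lattice paths from $(l,l-1,\ldots,1)$ to $(r+l,r+l-2,\ldots,r-l+2)$ confined to $z_1 > z_2 > \cdots > z_l > z_1 - k$, your translation of the tableau and gluing conditions into the strict ordering and width-$(k-1)$ window constraints is exactly the paper's argument, and your final winding-summed determinant agrees with \eqref{eq:3}. The one step you flag as the main obstacle --- the cylindric LGV / affine reflection identity counting band-confined ordered paths --- is not proved in the paper either: it is quoted directly as \cite[Theorem~10.18.6]{bona2015handbook}, so your sketch amounts to outlining a proof of the theorem the paper simply cites.
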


\begin{proof}
  Using the correspondence in \cref{eq:8} and the definition of a
  strip tableau for $k$ even, we find that $C(r,k)$ equals the number
  of ways to fill out the horizontal strip $\mu_0$ of length $r$ and
  width $l$ using each symbol in $[g-1]$ exactly once while adhering
  to the tableau and gluing conditions.  We aim to construct a
  bijection between these strip tableaux and lattice paths in $\Z^l$
  joining $(l,l-1,\ldots,1)$ to $(r+l,r+l-2,\ldots,r-l+2)$ such that
  each step is in a positive unit direction and every point
  $(z_1,z_2,\ldots,z_l)$ satisfies the constraints
  $z_1>z_2>\cdots>z_l>z_1-k$.  Once we have this, we are done: by
  \cite[Theorem~10.18.6]{bona2015handbook}, the number of such lattice
  paths is exactly given by \cref{eq:3}.

  Given a strip tableau $t$, we obtain a lattice path in the following
  way.  The path begins at $(l,l-1,\ldots,1)$.  Suppose that the first
  $a-1$ steps in the path have been defined and satisfy the conditions
  above.  Identify the unique box $(x,y) \in \mu_0$ containing the
  symbol $a$.  Then define the $a$-th step of the path to be a
  positive unit step in the $y$-th coordinate.

  % The set of divisors that we want to enumerate is in bijection with
  % the set of tableaux that are non-repeating in the horizontal strip
  % $\mu_0$. We describe the bijection in the following way: given
  % $\mu$, we create a lattice path in $\Z^l$ that starts at the point
  % $(l,l-1,\ldots,1)$, where each step is a standard unit vector.  If
  % the symbol $a$ appears in box $(x,y)$ of $\mu$, then the $a$-th step
  % of the lattice path is a unit vector in the $y$-th coordinate.

  By the tableau condition, there are precisely $x-1$ values of
  $i \in \set{\ldots,-1,0,1,\ldots,x-1}$ for which $t(x-i,y) < a$
  (namely, the positive ones); this implies that the $a$-th step of
  the lattice path is the $x$-th step in the $y$-th coordinate.
  Applying similar reasoning to the boxes $(x,y-1)$ and $(x,y+1)$,
  which contain symbols less than $a$ and greater than $a$
  respectively, we may conclude that, by the $a$-th step, at least $x$
  steps have been taken in the $(y-1)$-th coordinate and at most $x-1$
  steps have been taken in the $(y+1)$-th coordinate.  Since the
  initial point of the path satisfies $z_{y-1} > z_y > z_{y+1}$, it
  follows that the $a$-th point does as well.  By induction, every
  point in the path satisfies the constraints $z_1>z_2>\cdots>z_l$.

  % By the tableau condition, the $a$-th step of the path is the $x$-th
  % step in the $y$-coordinate. Moreover, since the symbols below
  % $(x,y)$ in the same column are all smaller than $a$, the number of
  % symbols taken in indices smaller than $y$ are at least
  % $x$. Therefore, considering that the indices of the starting point
  % of the lattice path already satisfied $z_1>z_2>\cdots>z_l$, this
  % inequality remains true throughout the entire path.

  Next, the gluing condition forces $t(x+l,1)>t(x,l)$ for each $x$.
  On the lattice path, this means that the $(x+l)$-th step in the
  first coordinate must come after the $x$-th step in the $l$-th
  coordinate. At the starting point, the first coordinate is already
  greater by  $l-1$ compared to the $l$-th coordinate, and the gluing condition
  allows this gap to grow to at most $k-1$, giving us the final
  inequality $z_l>z_1-k$.

  Counting the number of boxes in each row of the strip demonstrates
  that the endpoint is $(r+l,r+l-2,\ldots,r-l+2)$, as expected.
  Hence, the procedure defined above, in fact, yields a lattice path of
  the desired form.  Conversely, given a lattice path, we may reverse
  the construction to get a strip tableau: if the $a$-th step in the
  lattice path is the $x$-th step in the $y$-th coordinate, then the
  symbol $a$ goes into box $(x,y)$.  The first $l-1$ inequalities on
  the coordinates verify the tableau condition and the final
  inequality verifies the gluing condition.
  %
  % We also must consider the gluing condition, which says
  % $t(x+l,1)>t(x,l)$; on the lattice path, this means that the $x+l$-th
  % step in the first index must come after the $x$-th step in the
  % $l$-th index. At the starting point, the first index is already
  % $l-1$ greater than the $l$-th index, and the gluing condition allows
  % this gap to grow to at most $k-1$, giving us the final inequality
  % $z_l>z_1-k$. Counting the number of boxes per row to find the end
  % point, this implies a lattice path from $(l,l-1,\ldots,1)$ to
  % $(r+l,r+l-2,\ldots,r-l+2)$ that lies within the hyperplanes given by
  % $z_1>z_2>\cdots>z_l>z_1-k$.
  %
  % Conversely, given a lattice path from $(l,l-1,\ldots,1)$ to
  % $(r+l,r+l-2,\ldots,r-l+2)$ within the regions
  % $z_1>z_2>\cdots>z_l>z_1-k$, we may reverse the construction to get a
  % non-repeating strip $\mu$. If the $a$-th step in the lattice path is
  % the $x$-th step in the $y$-th index, then the symbol $a$ goes into
  % box $(x,y)$; the first $l-1$ inequalities on the indices verify the
  % tableau condition, and the last inequality verifies the gluing
  % condition. From \cite[Theorem~10.18.6]{bona2015handbook},
  % \cref{eq:3} is exactly the number of lattice paths that lie within
  % that region, which coincides with the number of divisors in the
  % locus.
\end{proof}

% See \cref{figure:keven} for various values of $C(r,k)$.

For convenience, we include $C(r,k)$ for small values of $r$ and $k$ in \cref{figure:keven}.

\begin{figure}[htb]
  \centering
  \begin{tabular}{c|c c c c c}
  $r$ & $C(r,0)$ & $C(r,2)$ & $C(r,4)$ & $C(r,6)$ & $C(r,8)$\\\hline
  1 & 1 & 1 & 1 & 1 & 1\\
  2 & 2 & 1 & 2 & 2 & 2\\
  3 & 16 & 1 & 4 & 16 & 16\\
  4 & 768 & 1 & 8 & 128 & 768\\
  5 & 292864 & 1 & 16 & 1024 & 35480\\
  6 & 1100742656 & 1 & 32 & 8178 & 1671168

  % $k$ & $C(1,k)$ & $C(2,k)$ & $C(3,k)$ & $C(4,k)$ & $C(5,k)$ & $C(6,k)$\\\hline
  % 0 & 1 & 2 & 16 & 768 & 292864 & 1100742656\\
  % 2 & 1 & 1 & 1 & 1 & 1 & 1\\
  % 4 & 1 & 2 & 4 & 8 & 16 & 32\\
  % 6 & 1 & 2 & 16 & 128 & 1024 & 8178\\
  % 8 & 1 & 2 & 16 & 768 & 35840 & 1671168
  \end{tabular}
  \caption{$C(r,k)$ for several values of $r$ and $k$.}
  \label{figure:keven}
\end{figure}

% \begin{figure}[htb]
%     \centering
%     \begin{tabular}{c c|c c c c c c|}
%     \cline{3-8}
%     & & \multicolumn{6}{c|}{$r$}\\ \cline{3-8}
%     & & 1 & 2 & 3 & 4 & 5 & 6 \\\cline{1-8}
%     \multicolumn{1}{|c|}{\multirow{5}{*}{$k$}} & 2 & 1 & 1 & 1 & 1 & 1 & 1\\
%     \multicolumn{1}{|c|}{} & 4 & 1 & 2 & 4 & 8 & 16 & 32\\
%     \multicolumn{1}{|c|}{} & 6 & 1 & 2 & 16 & 128 & 1024 & 8178\\
%     \multicolumn{1}{|c|}{} & 8 & 1 & 2 & 16 & 768 & 35840 & 1671168\\
%     \multicolumn{1}{|c|}{} & * & 1 & 2 & 16 & 768 & 292864 & 1100742656\\\hline
%     \end{tabular}
%     \caption{$C(r,k)$ for several values of $r$ and $k$.  The $*$
%       indicates the generic case.}
%     \label{figure:keven}
% \end{figure}

\begin{example}
\label{ex:k=2or4}
  For low values of $k$, we may exhibit all the horizontal strips
  directly. For instance, we claim that $C(r,2)=1$ for every $r$.
  Indeed, the Prym tableaux with minimal codimension are uniquely
  determined by the bottom row, and the only way to fill out the
  row is by using the symbols 1 through $g-1$ in increasing order.

  To see that $C(r,4)=2^{r-1}$, we note first that the tableau
  condition forces the symbol $1$ to be placed into the box $(1,1)$.
  The tableau and gluing conditions together force each subsequent
  pair of symbols $\set{2n-2, 2n-1}$ for $n \in \set{2,3,\ldots,r}$ to
  be placed into $A_n \cap \mu_0$, which contains two boxes.  Thus,
  each value of $n$ yields 2 possibilities for symbol placement.
  This choice is independent of previous choices, so the total number
  of possibilities is $2^{r-1}$.
 	
  % We use induction to show that $C(r,4)=2^{r-1}$.  When $r=1$, there
  % is a unique way of filling the tableau.  Now, assume that the
  % formula holds for $r$ at most $q$, and let $r=q+1$. The tableau is
  % uniquely determined by the horizontal strip, which consists of $r$
  % boxes in the bottom row, and $r-1$ boxes in the row above it.
 	 
  % We note that it is not possible for both $2r-1$ and $2r-2$ to appear
  % in the second row: the largest possible symbol that could appear at
  % the end of the first row is $2r-3$, violating the gluing
  % condition. Thus, $2r-1$ and $2r-2$ appear in the boxes $(r,1)$ and
  % $(r-1,2)$. Once those symbols are placed (in any order), the
  % remaining boxes produce a tableau of size $r-1$. The inductive
  % hypothesis implies that there are $2^{r-2}$ such tableaux, so we are
  % done.
\end{example}

When $g-1 > n(r,k)$, the Prym--Brill--Noether locus has positive
dimension.  Its maximal cells still correspond to strip tableaux, but
in this case, each tableau uses only $n(r,k)$ of the $g - 1$
available symbols.  Keeping in mind that $C(r,k)$ counts the number of
strip tableaux when the set of symbols is fixed and every symbol must
be used, we easily obtain the following result.

% The difficulty that arises in higher dimensional loci is due to
% limitations of Prym tableaux. While they are useful in studying
% divisors, they say little about the intersections of components in
% high dimension, and the intersections of more than two
% components. However, they are enough to prove the two following
% results, which we use when we discuss 1-dimensional loci.

\begin{proposition}
  \label{prop:numcomp}
  The number of maximal cells of $V^r(\Gamma,\varphi)$ equals
  \begin{equation*}
    C(r,k) \cdot \binom{g-1}{n(r,k)}.
  \end{equation*}
\end{proposition}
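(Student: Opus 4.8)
The plan is to reduce the count to the correspondence in \cref{eq:8} together with a symbol-relabeling argument. First I would invoke \cref{eq:8} to replace the number of maximal cells of $V^r(\Gamma,\varphi)$ by the number of strip tableaux of type $(g,r,k)$. By definition, such a tableau is an injective map $t\maps\mu\to[g-1]$ on a strip $\mu$ of length $r$ and width $l$ satisfying the tableau and gluing conditions. The crucial structural fact, already recorded in the proof of \cref{prop:18} (and trivially true in the case where the strip is all of $T_r$), is that every strip consists of exactly $n(r,k)$ boxes; since $t$ is injective, each strip tableau uses precisely $n(r,k)$ distinct symbols drawn from $[g-1]$.

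Next I would stratify the collection of strip tableaux according to the image $S = t(\mu) \subset [g-1]$, a subset of cardinality $n(r,k)$. There are $\binom{g-1}{n(r,k)}$ possible values for $S$, so it suffices to show that, for each fixed $S$, the number of strip tableaux with image exactly $S$ equals $C(r,k)$. Summing over all choices of $S$ would then yield the claimed product.

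The key observation---and the only point requiring any care---is that all of the conditions defining a strip tableau are invariant under order-preserving relabeling of the symbols. Indeed, the tableau condition and the gluing condition are each expressed purely as inequalities $t(\cdot) < t(\cdot)$ between symbols, injectivity is manifestly order-invariant, and the displacement condition is vacuous on $\mu$ because $t$ is injective there; when $t$ is extended to the non-repeating tableau on $T_r$, the repeating conditions automatically place each repeated symbol in the correct diagonal modulo $k$, so the displacement condition imposes no constraint on the filling and is independent of the actual symbol values. Consequently, the unique order isomorphism $S \to [n(r,k)]$ induces a bijection between strip tableaux with image $S$ and strip tableaux with image $[n(r,k)]$. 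The latter are precisely the strip tableaux counted by $C(r,k)$ in the $0$-dimensional case $g-1 = n(r,k)$ (where, since the strip has $n(r,k)$ boxes and every symbol of $[n(r,k)]$ must be used exactly once, the strip tableaux are exactly the bijective ones), so the fiber over each $S$ has cardinality $C(r,k)$.

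I do not anticipate a genuine obstacle: the whole argument is a clean fibration of the strip-tableau count over the choice of symbol set, internally absorbing the sum over the $2^{r-l}$ admissible strips (or the single horizontal strip in the even case) into $C(r,k)$ itself. The one step deserving explicit verification is the order-invariance of the defining conditions, and within that, confirming that the displacement condition never depends on which symbols are chosen---this is what guarantees that the fiber cardinality is the \emph{same} $C(r,k)$ for every $S$, rather than merely finite.
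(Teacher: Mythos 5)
Your proposal is correct and follows essentially the same route as the paper: both invoke the correspondence of \cref{eq:8}, observe that each strip tableau uses exactly $n(r,k)$ of the $g-1$ symbols, and count by choosing the symbol set and then the filling, with your order-preserving relabeling argument simply making explicit what the paper leaves implicit in the remark that $C(r,k)$ counts strip tableaux ``when the set of symbols is fixed.''
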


$C(r,k)$ remains unknown in the case that $k$ is odd and at most
$2r-3$.  The difficulty lies in counting the number of ways to fill
strips that are not horizontal, which is not in general equal  to $C(r,k)$.
%\caelan{I'd like to be able to
%  exhibit a counterexample to the claim that all the strips may be
%  filled in the same number of ways.  If we can't find one, we should
%  make a note that the problem is solved if it can be proved that, in
%  fact, every strip \textit{can} be filled in the same number of
%  ways.}

\subsection{First Betti number of 1-dimensional loci}
We now choose $g$, $r$, and $k$ so that $g-1=n(r,k)+1$.
Then $\dim V^r(\Gamma,\varphi) = 1$.   
In particular, the
Prym--Brill--Noether locus is a metric graph that consists of finitely many
circles. Each circle corresponds to a
strip tableau that uses all but a single symbol $a$, which we call the \define{free} symbol.
The circle consists of divisors with a fixed chip on each loop, except for $\ti\gamma_a$ and  $\ti\gamma_{2g-a}$. We refer to these loops as free as well. 

The only way that two different circles intersect is if they have
different free loops and agree on the fixed location of the chips on the other loops.  We also see that if two circles
intersect, then they do so at exactly one point.  It follows that
$V^r(\Gamma,\varphi)$ has a $4$-regular model. Since the 
graph is $4$-valent, the number of edges $e$ equals twice
the number of vertices $v$. The Betti number is therefore
\[
e-v+1 = 2v-v+1=v+1.
\] 
In terms of strip
tableaux, $t$ and $t'$ with free symbols $a$ and $a'$ respectively
give rise to non-trivially intersecting circles precisely when
$a \neq a'$ and $t'$ is obtained from $t$ by swapping $a$ in for
$a'$.

%We begin with a simpleobservation.

%The irreducible components of these loci are circles, two of which may only intersect non-trivially at a point. 
%It is natural to ask about the genus of this graph. 

%\begin{lemma}\label{lem:1}
%  The genus of $V^r(\Gamma,\varphi)$ equals the number of vertices
%  plus 1.  \caelan{I feel like this is so small that it barely
%    warrants being a lemma.  I think it would be easier to simply
%    embed it into the discussion.} \yoav{I agree that it doesn't require a lemma. But when you use this formula %later, you should say that it's because the graph is 4-valent.}
%\end{lemma}

%\begin{proof}
%  Since the graph is $4$-valent, the number of edges $e$ equals twice
%  the number of vertices $v$. The genus is therefore
%  $e-v+1 = 2v-v+1=v+1$.
%\end{proof}

The rest of the section is devoted to calculating  the Betti number of this graph in the generic case and when $k$ is 2 or 4.  We begin with the generic case. 
%This section is devoted to calculating the arithmetic genus and determining other
%combinatorial properties of $V^r(\Gamma,\varphi)$
%We now restate and prove \cref{thm:generic-dim-1}.

\genericdimone*

\begin{proof}
  Since the edge lengths are generic, we have a correspondence between maximal cells
  of $V^r(\Gamma,\varphi)$ and injective tableaux defined on $T_r$.
  We know from \cref{thm:tropicalPBN} that $n(r,k) = \binom{r+1}{2}$.
  Since $g - 1 = n(r,k) + 1$, it follows from \cref{prop:numcomp}  that the
  number of maximal cells is
  $C(r,0) \cdot \left(\binom{r+1}{2}+1\right)$.

  The vertices of the 4-regular model of $V^r(\Gamma,\varphi)$ are
  precisely the intersection points between circles. 
  %(i.e., maximal  cells).  
  Let $E^{T_r}$ denote the average number of intersection
  points on each circle.  Then the total number of vertices is given
  by $\frac{1}{2} E^{T_r} \cdot C(r,0) \cdot \left(\binom{r+1}{2}+1\right)$.
  Notice the similarity to the first term in \cref{eq:9}; since the Betti number
   is given by $v+1$ (where $v$ is the number of vertices), it suffices to show that $E^{T_r} = r$.
  
  For any skew shape $\lambda$,
    denote by $f^\lambda$ the number of
  distinct injective tableaux defined on $\lambda$ that take values in
  $[n]$, where $n$ is the number of boxes in $\lambda$.  From
  \cite[Theorem~2.9]{chan2018genera}, it follows that the average
  number of intersection points per circle is
  % \yoav{This is not a formula, it's an expression. What does it
  % describe?}:
  \begin{equation}\label{eq:11}
    E^{\lambda} \coloneq 2\left(r+\sum_{i=1}^{r}\frac{r-i}{n+1} \cdot
      \frac{f^{\prescript{i}{}{\lambda}}}{f^{\lambda}}
      -\sum_{i=1}^{r}\frac{r+1-i}{n+1} \cdot
      \frac{f^{\lambda^{i}}}{f^{\lambda}}\right),
  \end{equation}
  where the terms $\prescript{i}{}{\lambda}$ and $\lambda^{i}$
  describe the tableaux obtained by adding a box to the left or the
  right respectively in the $i$-th
  row.  Taking
  $\lambda = T_r$, $f^{\lambda}$ reduces to $C(r,0)$.
  
  % \derek{The proof of this theorem uses "shape" and "hook
  % length" from Young tableaux that weren't introduced
  % earlier, since it wasn't necessary for discussion of Prym
  % tableaux. I'm not sure of the best way to introduce and
  % use them for the proof.}
	
  Consider the first summation in \cref{eq:11}.  When $i = r$, the
  term is clearly 0.  Provided that $i\neq r$, the resulting shape of
  $\prescript{i}{}{\lambda}$ is not a skew tableau, so
  $f^{\prescript{i}{}{\lambda}}=0$.  Thus, this summation vanishes.
	
  \begin{figure}[htb]
    \centering
    \ytableausetup{boxsize=normal}
    \begin{ytableau}
      1\\
      3 & 1\\
      5 & 3 & 1\\
      7 & 5 & 3 & 1\\
      9 & 7 & 5 & 3 & 1
    \end{ytableau}
    \quad
    \begin{ytableau}
      1\\
      3 & 1\\
      6 & 4 & 2 & 1\\
      7 & 5 & 3 & 2\\
      9 & 7 & 5 & 4 & 1
    \end{ytableau}
    \caption{Hook lengths of each box in $T_5$ and in
      $(T_5)^3$.}\label{fig:5}
  \end{figure}
	
  Next, we look at the second summation.
  % , which we rewrite as
  % \begin{gather*}
  %   \sum_{i=1}^{r}(r+1-i) \cdot
  %   \frac{f^{\lambda^{i}}}{(n+1)f^{\lambda}}.
  % \end{gather*}
  We need to enumerate the tableaux obtained by adding a box to the
  end of each row of $T_r$.  Each $f^{\lambda^i}$ can be computed
  using the hook length formula.  We note that in $T_r$, fixing
  $q \in [r]$, the boxes in $A_q$ each have hook length $2(r-q)+1$.
  When a box is added, the hook length of every box in its row and
  column increases by $1$, while all other hook lengths remain the
  same.  (See \cref{fig:5} for an example.)  Thus, the fraction
  $f^{\lambda^{i}}/(n+1)f^{\lambda}$ simplifies down to the ratio of
  the differing hook lengths:
  \begin{gather*}
    \frac{f^{\lambda^{i}}}{(n+1)f^{\lambda}} = \frac{(n+1)!\prod
      h_{\lambda}(i,j)}{(n+1)n!\prod h_{\lambda^i}(i,j)} =
    \frac{(2(r-i)+1)!!(2i-3)!!}{(2(r-i+1))!!(2i-2)!!},
  \intertext{where $h_{\lambda}(i,j)$ is the hook length of box $(i,j)$ in shape $\lambda$ and $(-1)!!$ is defined as 1. We observe that}
    \frac{(2i-3)!!}{(2i-2)!!}=\frac{(2i-3)!!}{2^{i-1}(i-1)!}
    =\frac{(2i-2)!}{2^{i-1}(i-1)!(2i-2)!!}
    =\frac{(2i-2)!}{2^{i-1}(i-1)!2^{i-1}(i-1)!}
    =\frac{\binom{2i-2}{i-1}}{2^{2(i-1)}}.
  \intertext{A similar calculation gives us}
    \frac{(2(r-i)+1)!!}{(2(r-i+1))!!} =
    \frac{\binom{2(r-i+1)}{r-i+1}}{2^{2(r-i+1)}},
  \intertext{so}
    \frac{f^{\lambda^{i}}}{(n+1)f^{\lambda}}
    =\frac{\binom{2i-2}{i-1}\binom{2(r-i+1)}{r-i+1}}{2^{2r}}.
  \intertext{Setting $j=r-i+1$, the sum becomes}
    \sum_{i=1}^{r}(r-i+1) \cdot
    \frac{\binom{2i-2}{i-1}\binom{2(r-i+1)}{r-i+1}}{2^{2r}} =
    \sum_{j=1}^{r}j\cdot
    \frac{\binom{2j}{j}\binom{2(r-j)}{r-j}}{2^{2r}}.
  \intertext{For each $j$ we have}
    j\cdot\frac{\binom{2j}{j}\binom{2(r-j)}{r-j}}{2^{2r}} +
    (r-j)\cdot\frac{\binom{2(r-j)}{(r-j)}
      \binom{2(r-(r-j))}{r-(r-j)}}{2^{2r}} =
    r\cdot\frac{\binom{2j}{j}\binom{2(r-j)}{r-j}}{2^{2r}}.
  \intertext{Thus, grouping $j$ and $r-j$ together, and adding $j=0$ to match $j=r$, we get}
    \sum_{j=0}^{r}j\cdot\frac{\binom{2j}{j}\binom{2(r-j)}{r-j}}{2^{2r}}
    = \frac{r}{2}\sum_{j=0}^{r}
    \frac{\binom{2j}{j}\binom{2(r-j)}{r-j}}{2^{2r}}.
  \end{gather*}
  Finally, by \cite{CBCC}, the sum is equal to 1, so the entire term is
  equal to $\frac{r}{2}$. Plugging this value back into \cref{eq:11},
  we conclude that the average number of vertices on each circle is
  $r$, as desired.
\end{proof}

We conclude the paper by computing the first Betti number of the
Prym--Brill--Noether curve for low even gonality.
\begin{proposition}\label{prop:k2dim1}
  Suppose that $k=2$ and that the Prym--Brill--Noether locus is $1$-dimensional. Then it contains  $r+1=g-1$ circles, and has first Betti number $r+1$.
\end{proposition}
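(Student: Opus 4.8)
The plan is to determine the numerology, index the circles by a single free symbol, and then extract the intersection pattern. Setting $k=2$ in \cref{thm:tropicalPBN} gives $l = \ceil{k/2} = 1$ and hence $n(r,2) = r$; the hypothesis $\dim V^r(\Gamma,\varphi)=1$ forces $g-1 = n(r,2)+1 = r+1$. Thus there are $g-1 = r+1$ available symbols, and every strip tableau of type $(g,r,2)$ omits exactly one free symbol $a \in [r+1]$.

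For the circle count, I would note that since $k$ is even the relevant strip is the horizontal strip $\mu_0$, which for width $l=1$ is simply the bottom row $(1,1),(2,1),\dots,(r,1)$ of $T_r$. By \cref{ex:k=2or4} we have $C(r,2)=1$: for any fixed set of $r$ symbols there is a unique filling, obtained by listing them along the row in increasing order. Hence each circle is determined by its free symbol, and \cref{prop:numcomp} gives $C(r,2)\cdot\binom{g-1}{n(r,2)} = \binom{r+1}{r} = r+1$ circles, which proves the first assertion.

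The core of the proof is the intersection pattern. Writing out the tableau with free symbol $a$ explicitly, the box $(x,1)$ carries the symbol $x$ when $x<a$ and $x+1$ when $x\geq a$. Two circles with free symbols $a$ and $a'$ meet precisely when one tableau is produced from the other by swapping $a$ in for $a'$, as recalled in the discussion preceding the proposition. Plugging the explicit fillings into this swap, I would check that replacing the symbol $a'$ by $a$ preserves the strictly increasing order along the row exactly when $a$ and $a'$ occupy adjacent slots, i.e. when $\abs{a-a'}=1$; for any other pair the swapped row fails to be monotone and so is not a valid tableau. Therefore the adjacency graph on the $r+1$ circles is the path $1 - 2 - \cdots - (r+1)$, and since adjacent circles meet in exactly one point there are $v = r$ crossing points.

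Finally I would invoke the observation from the start of this subsection that $V^r(\Gamma,\varphi)$ carries a $4$-regular model whose first Betti number is $v+1$; with $v=r$ this gives first Betti number $r+1$. The only step demanding real care is the intersection analysis: one must confirm that the swap yields the other circle's tableau exactly for neighboring free symbols and for no others, and that the $r$ crossings are genuine transverse intersections of pairs of circles (no triple points), so that the graph is indeed $4$-regular on $r$ vertices. The remaining steps are routine bookkeeping with the explicit row fillings.
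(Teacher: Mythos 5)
Your proposal is correct and follows essentially the same route as the paper's proof: index the circles by the free symbol, observe that the unique increasing filling of the bottom row forces a swap to be valid exactly for consecutive symbols (the paper phrases this as each free symbol $m$ swapping only with $m-1$ or $m+1$, with the extremes $1$ and $g-1$ having a single swap), and conclude that the locus is a chain of $r+1$ circles with $r$ intersection points, giving first Betti number $v+1 = r+1$. The only cosmetic difference is that you count circles via \cref{prop:numcomp} and vertices via the path-graph structure, while the paper counts vertices circle by circle; the content is identical.
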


\begin{proof}[Proof.]
  In this case, each tableau contains $g-2$ symbols and is determined by the bottom $1\times r$ rectangle; the positions of the symbols in the strip are determined after choosing a symbol to leave out.  When 1 or $g-1$ is the free symbol, it may only swap into the first or last box in the strip,   respectively, so the corresponding circle only has a single vertex. If any other symbol $m$ is left out,  it can swap with either the symbol $m-1$ or $m+1$, so the corresponding circle has two vertices. Thus, the locus is a chain of $r+1=g-1$ circles wedged together, which has Betti number of $r+1$.
\end{proof}

The last case that we deal with is $k=4$. 
%In the $k=4$ case, we compute the Betti number, and find the number vertices each circle has.

\begin{proposition}
  \label{prop:k4dim1}
  Suppose that $k=4$ and that the Prym--Brill--Noether locus is $1$-dimensional. Then it has the following structure. 
  \begin{enumerate}[label=(\roman*)]
      \item The circles corresponding to the free symbol $1$ have a single vertex.
      \item The circles corresponding to any other odd free symbol have two vertices.
      \item The circles corresponding to the free symbol 2 have three vertices.
      \item The circles corresponding to the free symbol $2r$ have two vertices.
      \item The circles corresponding to any other even free symbol have four vertices.
  \end{enumerate}
  The graph has $2^{r-1}\cdot 2r$ circles and first Betti number  $2^{r-1}(3r-2)+1$.
\end{proposition}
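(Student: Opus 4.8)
The plan is to compute the total number of vertices $v$ of the $4$-regular graph $V^r(\Gamma,\varphi)$ and then invoke the identity (established in the discussion preceding the proposition) that the first Betti number equals $v+1$. Throughout I assume $r\geq 3$, which is exactly the range in which $k=4\leq 2r-2$, so that $n(r,4)=2r-1$, $g-1=2r$, and the horizontal strip $\mu_0$ genuinely has two rows. Since $C(r,4)=2^{r-1}$ by \cref{ex:k=2or4}, \cref{prop:numcomp} yields $2^{r-1}\binom{2r}{2r-1}=2^{r-1}\cdot 2r$ circles, and exactly $2^{r-1}$ of them carry each fixed free symbol $a\in[2r]$. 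Recalling that two circles with free symbols $a\neq a'$ meet (in a single point) precisely when one tableau is obtained from the other by swapping $a$ in for $a'$, the number of vertices on a circle with free symbol $a$ equals the number of symbols $a'$ whose box can receive $a$ so as to produce a valid strip tableau.

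Next I would make the geometry of $\mu_0$ explicit: its boxes form the bottom row $(1,1),\dots,(r,1)$ together with the second row $(1,2),\dots,(r-1,2)$, and for $n\geq 2$ the anti-diagonal $A_n\cap\mu_0=\{(n,1),(n-1,2)\}$ holds the two symbols of ranks $2n-2$ and $2n-1$, with $(1,1)$ holding rank $1$. Deleting the free symbol $a$ merely relabels ranks, so one reads off directly which anti-diagonals contain $a-2,a-1,a+1,a+2$. The governing dichotomy is one of parity: if $a$ is odd then $a-1$ and $a+1$ occupy the \emph{same} anti-diagonal $A_{(a+1)/2}$, whereas if $a$ is even then $a-1\in A_{a/2}$ and $a+1\in A_{a/2+1}$, and crucially the two symbols $a+1,a+2$ of $A_{a/2+1}$ both exceed $a$.

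I would then test each candidate insertion against the tableau condition and the gluing condition $t(x,2)<t(x+2,1)$, using that the inserted value $a$ must be larger than all its lower neighbours and smaller than all its upper neighbours. For odd $a$ only the two boxes of $A_{(a+1)/2}$ can receive $a$, giving the swaps $a\leftrightarrow a-1$ and $a\leftrightarrow a+1$; inserting $a$ into the box of $a-2$ or of $a+2$ fails because the adjacent anti-diagonal already carries $a-1$ (resp.\ $a+1$) on the wrong side of $a$, breaking either the tableau or the gluing condition. For even $a$ all four targets $a-2,a-1,a+1,a+2$ are admissible. The technical crux, which I expect to be the main obstacle, is checking that the two extra insertions (into the boxes of $a-2$ and $a+2$) satisfy the gluing condition; this holds precisely because the intervening anti-diagonal $A_{a/2+1}$ consists only of symbols larger than $a$, in sharp contrast with the odd case. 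One checks that these conclusions are independent of which of the $2^{r-1}$ fillings is used, so the vertex count is a function of $a$ alone. The boundary symbols need minor adjustments: $a=1$ admits only the insertion into $(1,1)$ (one vertex); $a=2$ loses the out-of-range target $a-2=0$ (three vertices); and the maximal symbol $a=2r$ loses $a+1,a+2$ but keeps both downward swaps (two vertices), while $a=2r-1$ behaves like a generic odd symbol (two vertices).

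Finally I would add up the contributions. The per-symbol vertex counts are $1$ for $a=1$, $2$ for each of the $r-1$ remaining odd symbols, $3$ for $a=2$, $2$ for $a=2r$, and $4$ for each of the $r-2$ remaining even symbols, totalling $1+2(r-1)+3+2+4(r-2)=6r-4$. Multiplying by the $2^{r-1}$ circles per free symbol gives $\sum_C n_C=2^{r-1}(6r-4)$; since each vertex lies on exactly two circles, $v=\tfrac12\sum_C n_C=2^{r-1}(3r-2)$, and the first Betti number is $v+1=2^{r-1}(3r-2)+1$, as claimed.
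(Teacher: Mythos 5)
Your proposal is correct and takes essentially the same approach as the paper's proof: count the $2^{r-1}\cdot 2r$ circles, determine for each free symbol $a$ how many swaps produce a valid strip tableau (via the anti-diagonal ordering forced by the tableau and gluing conditions), halve the total $2^{r-1}(6r-4)$ to get the vertex count $2^{r-1}(3r-2)$, and conclude with the Betti number $v+1$. The only quibble is your blanket assumption $r\geq 3$, which is unnecessary: the case $r=2$ also satisfies the hypotheses (there $g=5$, $k=4$, and $n(2,4)=3=2r-1$ still holds), and your argument covers it verbatim, with the ``other even free symbols'' case being vacuous.
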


\begin{proof}
 Since $k=4$, the  genus and rank are related by $g=2r+1$. From the gluing condition, it follows that the pair of symbols in each of the boxes $(m+1,1)$ and $(m,2)$ is strictly bigger than the pair of symbols in $(m,1)$ and $(m-1,2)$ (see \cref{tableau:4swap}). In total, for any missing symbol there are $2^{r-1}$ tableaux (see \cref{ex:k=2or4}), giving rise to $2^{r-1}\cdot (2r)$ circles. 
 
 Next, we calculate the number of vertices in the graph, by finding the number of ways of swapping in a free symbol.
  If the free symbol is $1$, it may only be swapped with $2$, which must be in the bottom left corner. Therefore, any circle corresponding to a tableau with missing symbol $1$ has exactly one vertex. Similarly, a missing $2$ may only be swapped for the first three boxes, and a missing $2r$ may only be swapped for the two rightmost boxes.
  
Suppose that the strip is missing an even symbol $2<2m<2r$. Then the symbols in the boxes $(m+1,1)$ and $(m,2)$ are $2m-2$ and $2m-1$, and the symbols in the boxes to to right are $2m+1$ and $2m+2$. The symbol $2m$ may be swapped in for  any of them. 
If, on the other hand, the strip is missing the odd symbol $2<2m+1<2r$, then the boxes  $(m+1,1)$ and $(m,2)$ are $2m$ and $2m+2$, and the symbols to the right are $2m+3$ and $2m+4$. Our symbol $2m+1$ may only be swapped in for of $2m$ or $2m+2$ without violating either the tableau or gluing condition. 

Altogether, we see that there are 
  \begin{gather*}
\frac{(4(r-2) + 2(r-1) + 1 + 3 + 2) \cdot 2^{r-1}}{2} = 2^{r-1}(3r-2)
\end{gather*}
vertices, so the Betti number  is $2^r\cdot (3r-2) + 1$.
\end{proof}

\begin{figure}[htb]
	\centering
	\ytableausetup{mathmode, boxsize=2em}
	\begin{ytableau}
		3\\
		1 & 2
	\end{ytableau}
	\quad$\cdots$\quad
	\begin{ytableau}
		\scriptstyle 2m-3 & \scriptstyle2m-2 & \scriptstyle 2m+1\\
		\none & \scriptstyle 2m-4 &  \scriptstyle 2m-1 & \scriptstyle 2m+2
	\end{ytableau}
	\caption{The bottom 2 rows of a tableau. The symbol $2m$ is missing, and may be swapped in four different boxes.}
	\label{tableau:4swap}
\end{figure}

\begin{example}
  \begin{figure}[htb]
  \centering
  \begin{subfigure}{.5\textwidth}
    \centering
    \includegraphics[scale=0.2]{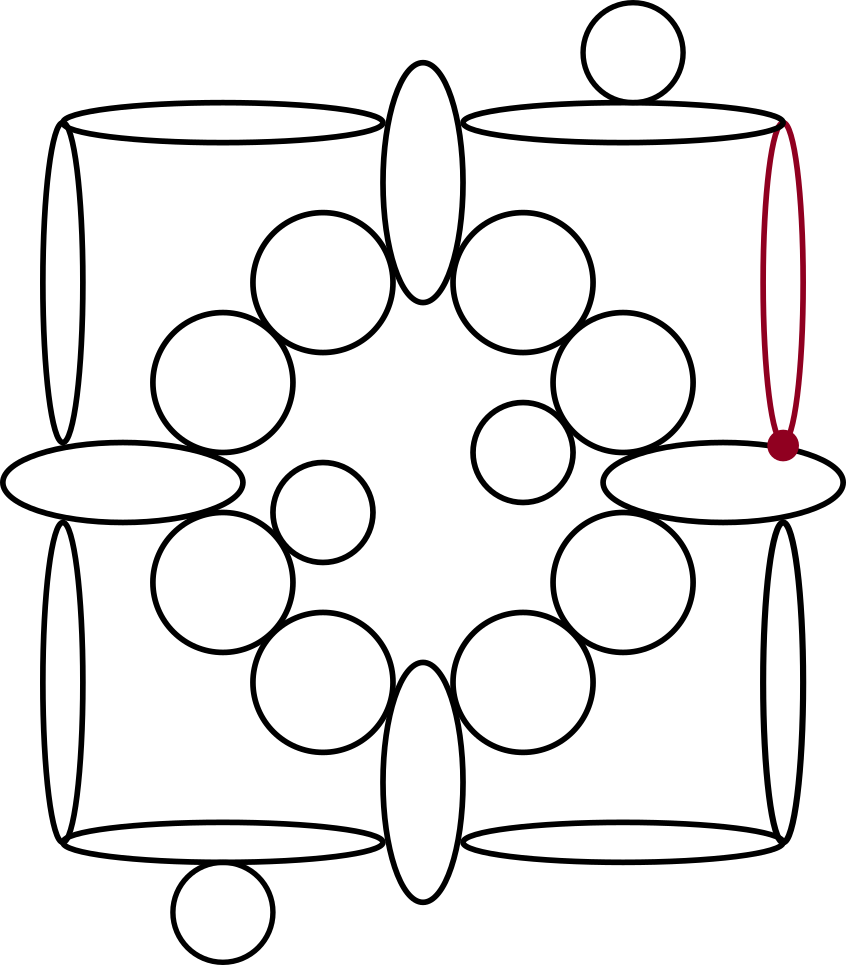}
    \caption{$V^r$ for $(g,k,r)=(7,4,3)$.}
    \label{fig:k4d1}
  \end{subfigure}%
  \begin{subfigure}[t]{.5\textwidth}
    \centering
    \ytableausetup{mathmode, boxsize=2em}
      \begin{ytableau}
      5\\
      4 & 6\\
      1 & 2 & 5
      \end{ytableau}
    \caption{The tableau corresponding to the highlighted circle in the locus.}
  \end{subfigure}
  \caption{}
  \end{figure}

Let $g=7$, $k=4$, and $r=3$. The Prym--Brill--Noether locus is depicted in \cref{fig:k4d1}.
In this case, $n(r,k)=5$, and $C(r,k)=2^{3-1}=4$. \cref{prop:numcomp} shows that the locus consists of $4\cdot\binom{6}{5}=24$ circles, and \cref{prop:k4dim1} implies that the Betti number  is $4(3(3)-2)+1=29$. 

Each of the four circles with $4$ vertices corresponds to a tableau with free symbol 4. The four circles with only a single vertex correspond to the free symbol 1, and the circles they intersect with correspond to the free symbol $2$. The highlighted circle in red is the circle corresponding to the tableau on the right, which has free symbol 3. The highlighted point of intersection corresponds to swapping the symbols $4$ and $3$. 
\end{example}

\bibliographystyle{alpha}
\bibliography{references}

\end{document}